\documentclass[11pt]{article}

\usepackage{amsfonts}
\usepackage{amscd}
\usepackage{amssymb}
\usepackage{amsthm}
\usepackage{amsmath}
\usepackage{stmaryrd}
\usepackage{graphicx}
\usepackage{color}
\usepackage{verbatim}

\input prepictex
\input pictex
\input postpictex

 \theoremstyle{plain}
\newtheorem{thm}{Theorem}[section]
\newtheorem{lemma}[thm]{Lemma}
\newtheorem{prop}[thm]{Proposition}
\newtheorem{cor}[thm]{Corollary}
\newtheorem*{thmB}{Theorem B}
\newtheorem*{thmS}{Theorem S}
\newtheorem*{thmR}{Theorem R}
\newtheorem*{thmG}{Theorem G}
\newtheorem*{thmC}{Theorem C}
\newtheorem*{thmCd}{Theorem $\mathbf{C}'$}

\theoremstyle{definition}
\newtheorem{Example}[thm]{Example}
\newtheorem*{defn}{Definition}
\newtheorem{remark}[thm]{Remark}
\theoremstyle{remark}

\numberwithin{equation}{section}

\setlength{\evensidemargin}{1in}
\addtolength{\evensidemargin}{-1in}
\setlength{\oddsidemargin}{1.5in}
\addtolength{\oddsidemargin}{-1.5in} \setlength{\topmargin}{1in}
\addtolength{\topmargin}{-1.5in}

\setlength{\textwidth}{16cm} \setlength{\textheight}{23cm}

\def\cA{\mathcal{A}}

\def\cG{\mathcal{G}}
\def\cH{\mathcal{H}}

\def \cP{\mathcal{P}}

\def\CC{\mathbb{C}}

\def\FF{\mathbb{F}}

\def\RR{\mathbb{R}}

\def\ZZ{\mathbb{Z}}

\def\fg{\mathfrak{g}}
\def\fh{\mathfrak{h}}

\def\Card{\mathrm{Card}}

\def\dim{\mathrm{dim}}

\def\la{\lambda}


\def\nla{\beginpicture        
        \setplotarea x from -1 to 1, y from -1 to 1  
        \arrow <5pt> [.2,.67] from 0.25 0.12 to -0.25 -0.12
    \endpicture}

\def\upra{\beginpicture        
        \setplotarea x from -1 to 1, y from -1 to 1  
        \arrow <5pt> [.2,.67] from -0.25 -0.12 to 0.25 0.12
    \endpicture}

\def\ra{\beginpicture         
        \setplotarea x from -1 to 1, y from -1 to 1  
        \arrow <5pt> [.2,.67] from -0.25 0.12 to 0.25 -0.12
    \endpicture}

\def\upla{\beginpicture         
        \setplotarea x from -1 to 1, y from -1 to 1  
        \arrow <5pt> [.2,.67] from 0.25 -0.12 to -0.25 0.12
    \endpicture}

\def\da{\beginpicture    
        \setplotarea x from -1 to 1, y from -1 to 1  
        \arrow <5pt> [.2,.67] from 0 0.3 to 0 -0.3
    \endpicture}

\def\ua{\beginpicture    
        \setplotarea x from -1 to 1, y from -1 to 1  
        \arrow <5pt> [.2,.67] from 0 -0.3 to 0 0.3
    \endpicture}

\def\lfold{\beginpicture 
\setplotarea x from -1 to 1, y from -1 to 1  
    \plot 0 0 -0.3 -0.15 /
    \plot -0.3 -0.15 -0.225 -0.3 /
    \arrow <5pt> [.2,.67] from -0.225 -0.3 to 0.05 -0.13
\endpicture}

\def\lfoldalt{\beginpicture 
\setplotarea x from -1 to 1, y from -1 to 1  
    \plot 0.05 -0.13 -0.225 -0.3 /
    \plot -0.225 -0.3 -0.3 -0.15 /
    \arrow <5pt> [.2,.67] from -0.3 -0.15 to 0 0
\endpicture}

\def\rfold{\beginpicture
\setplotarea x from -1 to 1, y from -1 to 1  
    \arrow <5pt> [.2,.67] from 0.3 -0.15 to 0 0
    \plot 0.3 -0.15 0.225 -0.3 /
    \plot 0.225 -0.3 -0.05 -0.13 /
\endpicture}

\def\dfold{\beginpicture    
\setplotarea x from -1 to 1, y from -1 to 1  
    \plot 0.08 0 0.08 -0.32 /
    \plot 0.08 -0.32 -0.08 -0.32 /
    \arrow <5pt> [.2,.67] from -0.08 -0.32 to -0.08 0
\endpicture}

\def\rfoldalt{\beginpicture
\setplotarea x from -1 to 1, y from -1 to 1  
    \plot 0 0 0.3 -0.15 /
    \plot 0.3 -0.15 0.225 -0.3  /
    \arrow <5pt> [.2,.67] from 0.225 -0.3 to -0.05 -0.13
\endpicture}

\def\dfolddash{\beginpicture    
\setplotarea x from -1 to 1, y from -1 to 1  
    \arrow <5pt> [.2,.67] from 0.08 -0.32 to 0.08 0 %
    \plot 0.08 -0.32 -0.08 -0.32 /
    \plot -0.08 -0.32 -0.08 0 /
\endpicture}

\definecolor{Gray}{gray}{0.5}

\makeatletter
\renewcommand{\@makefnmark}{\mbox{\textsuperscript{}}}
\makeatother

\title{Alcove walks, buildings, symmetric functions and representations}
\author{
James Parkinson \\
Institut f\"{u}r Mathematische Strukturtheorie \\ 
Technische Universit\"{a}t Graz
\and
Arun Ram \\
Department of Mathematics and Statistics \\
University of Melbourne \\
Parkville VIC 3010 Australia \\
A.Ram@ms.unimelb.edu.au \\
and \\
Department of Mathematics\\ University of Wisconsin\\
Madison, WI 53706 USA \\ 
ram@math.wisc.edu 
}

\begin{document}

\maketitle


\begin{abstract} 
For a complex simple Lie algebra $\fg$, the dimension $K_{\lambda\mu}$ of the $\mu$ weight space of a finite dimensional representation of highest weight $\lambda$ is the same as the number of Littelmann paths of type $\lambda$ and weight $\mu$. In this paper we give an explicit construction of a path of type $\lambda$ and weight $\mu$ whenever $K_{\lambda\mu}\ne 0$. This construction has additional consequences, it produces an explicit point in the building which chamber retracts to $\lambda$ and sector retracts to~$\mu$, and an explicit point of the affine Grassmannian in the corresponding Mirkovi\'c-Vilonen intersection. In an appendix we discuss the connection between retractions in buildings and alcove walks.
\end{abstract}

\section*{Introduction}

Let $R$ be a reduced irreducible root system with Weyl group $W_0$, coroot lattice~$Q$, and coweight lattice~$P$. Let $W=Q\rtimes W_0$ be the affine Weyl group. For $\la\in P^+$ a dominant coweight, let $\Pi_{\la}$ be the saturated set with highest coweight~$\la$, so that 
\begin{align*}
\Pi_{\la}=\bigcap_{w\in W_0}w(\la-Q^+),
\end{align*}
where $Q^+=\ZZ_{\geq0}\alpha_1^{\vee}+\cdots+\ZZ_{\geq0}\alpha_n^{\vee}$ and $\alpha_1^{\vee},\ldots,\alpha_n^{\vee}$ are the simple coroots of~$R$. 

Consider the following well known theorems from \textbf{R}epresentation theory, \textbf{S}ymmetric function theory, \textbf{B}uilding theory, and \textbf{G}eometry/\textbf{G}roup theory.

\begin{thmR} Let $\fg$ be the finite dimensional semisimple Lie algebra over $\CC$ with root system~$R^{\vee}$, and for $\la\in P^+$ let $V(\la)$ be the finite dimensional irreducible representation of $\fg$ with highest weight $\la$. For $\mu\in P$, let $V(\la)_{\mu}$ be the $\mu$-weight space of $V(\la)$. Then  
$\dim V(\la)_{\mu}\neq 0$ if and only if $\mu\in\Pi_{\la}$.
\end{thmR}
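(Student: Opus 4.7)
The theorem has two directions.

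\emph{Forward direction.} I would combine two standard facts about $V(\la)$: the weight set is $W_0$-invariant (each simple reflection $s_i$ intertwines weight spaces via a composite of exponentials in the integrable $\fsl_2$-triple attached to $\alpha_i^\vee$), and is contained in $\la-Q^+$ (since $V(\la)$ is generated from a highest weight vector by lowering operators). For any weight $\mu$ and every $w\in W_0$, the element $w\mu$ is again a weight, hence $w\mu\in\la-Q^+$, i.e.\ $\mu\in w^{-1}(\la-Q^+)$; intersecting over $w$ yields $\mu\in\Pi_\la$.

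\emph{Reverse direction.} By $W_0$-invariance of both sides it suffices to prove that every dominant $\mu\in P^+$ with $\la-\mu\in Q^+$ is a weight of $V(\la)$. I would proceed by induction on the height $\text{ht}(\la-\mu)=\sum_j c_j$, where $\la-\mu=\sum_j c_j\alpha_j^\vee$. The base case $\mu=\la$ is the highest weight vector. For the inductive step, positive-definiteness of the $W_0$-invariant form gives
\[
(\la-\mu,\la-\mu)=\sum_j c_j(\alpha_j^\vee,\la-\mu)>0,
\]
so some index $j$ satisfies $c_j>0$ and $(\alpha_j^\vee,\la-\mu)>0$. I would then argue that $\mu+\alpha_j^\vee\in\Pi_\la$, so its dominant representative $\nu\in W_0(\mu+\alpha_j^\vee)$ lies in $\Pi_\la\cap P^+$ with strictly smaller height $\text{ht}(\la-\nu)$. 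The induction hypothesis supplies $\dim V(\la)_\nu>0$; by $W_0$-invariance of weight multiplicities, $\dim V(\la)_{\mu+\alpha_j^\vee}>0$ as well. Since $\mu\in P^+$, we have $\langle\mu+\alpha_j^\vee,\alpha_j\rangle\geq 2$, so $\mu+\alpha_j^\vee$ is not at the bottom of any $\alpha_j$-string in $V(\la)$; applying $f_j$ to a nonzero vector of weight $\mu+\alpha_j^\vee$ therefore yields a nonzero vector of weight $\mu$.

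\emph{Main obstacle.} The delicate step is verifying $\mu+\alpha_j^\vee\in\Pi_\la$ for the chosen $j$. Using the characterization $\Pi_\la=\text{conv}(W_0\la)\cap(\la+Q)$, this reduces to a geometric convexity statement: translating a dominant lattice point of the polytope inward by a simple coroot should keep it inside the polytope, and the positivity condition $(\alpha_j^\vee,\la-\mu)>0$ is precisely what secures this. An alternative route sidestepping the geometric analysis would be to invoke Freudenthal's recursive formula and show directly that nonvanishing of the multiplicities propagates through the recursion, exploiting the strict positivity of the coefficient $(\la+\mu+2\rho,\la-\mu)=|\la+\rho|^2-|\mu+\rho|^2$ for $\mu\in P^+$ with $\mu\prec\la$.
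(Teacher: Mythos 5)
Your forward direction is fine. The reverse direction, however, breaks at exactly the step you flag as delicate, and the problem is not a missing verification: the claim is false for the index $j$ your selection rule produces. Take $R$ of type $B_2$ as in Appendix~\ref{app:buildings} ($\alpha_1=e_1-e_2$, $\alpha_2=e_2$, so $\alpha_1^{\vee}=e_1-e_2$, $\alpha_2^{\vee}=2e_2$, $\omega_2=e_1+e_2$), and let $\la=\omega_2$, $\mu=0$. Then $\la-\mu=\alpha_1^{\vee}+\alpha_2^{\vee}$, so $c_1=c_2=1$, while $\langle\alpha_1^{\vee},\la-\mu\rangle=0$ and $\langle\alpha_2^{\vee},\la-\mu\rangle=2$. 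Your criterion therefore forces $j=2$, but $\mu+\alpha_2^{\vee}=2e_2$ has dominant representative $2e_1$, and $\la-2e_1=-\alpha_1^{\vee}\notin Q^+$, so $\mu+\alpha_2^{\vee}\notin\Pi_{\la}$. (Concretely $\fg=\mathfrak{sp}_4$ and $V(\omega_2)$ is the five\-dimensional representation with weights $\{\pm e_1\pm e_2,\,0\}$; the weight $2e_2$ does not occur, so the induction cannot pass through it.) Here $j=1$ would work, since $\mu+\alpha_1^{\vee}=\alpha_1^{\vee}=s_2\la\in\Pi_{\la}$ and $\langle\alpha_1^{\vee},\alpha_1\rangle=2>0$, but that index is excluded by the condition $\langle\alpha_j^{\vee},\la-\mu\rangle>0$. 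What actually has to be proved is that \emph{some} $j$ with $c_j>0$ and $\mu+\alpha_j^{\vee}\in\Pi_{\la}$ exists, and that is essentially the whole content of the step; the Freudenthal alternative you mention faces the same issue, since to see the right-hand side of the recursion is nonzero you must again exhibit a weight of the form $\mu+k\alpha$ above $\mu$.

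The standard repair runs the induction downward from $\la$ rather than upward from $\mu$: show the weight set of $V(\la)$ is saturated (the $\fsl_2$-string argument you already use), then show any saturated set containing $\la$ contains every dominant $\mu\preceq\la$ by choosing $\mu'$ in the weight set minimal among elements of $\mu+Q^+$ and using saturation to contradict minimality unless $\mu'=\mu$ (this is \cite[13.4, Lemma~B]{h2}-style). Note also that your route is entirely different from the paper's: the paper never argues inside $V(\la)$, but instead proves Theorem~$\mathbf{C}'$ combinatorially (constructing a positively folded alcove walk of dimension $\langle\la+\mu,\rho\rangle$ by applying root operators $\tilde e_{i_k}^{m_k}$ to the antidominant walk) and deduces Theorem~$\mathbf{R}$ by specialising the path formula (\ref{eq:macpath}) at $q^{-1}=0$. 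The analogue of your delicate step there is Lemma~\ref{lem:fall}, and the comparison is instructive: the coroot moves are ordered along a fixed reduced word $w_0=s_{i_1}\cdots s_{i_N}$, each step size $m_k$ is taken \emph{maximal} subject to staying in $\Pi_{\la}$, and membership is controlled by proving $\mu_k\preceq\la_k=s_{i_k}\cdots s_{i_1}\la$ by induction --- precisely the convexity bookkeeping your sketch lacks.
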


\begin{thmS} For $\lambda\in P^+$ let $c_{\lambda\mu}(q^{-1})$, $\mu\in P$, be the 
coefficients of the Macdonald spherical function
$P_\lambda(x,q^{-1})$ (see (\ref{eq:Pladefn})) so that
$$
P_{\la}(x, q^{-1})=\sum_{\mu\in P}c_{\la\mu}(q^{-1})x^{\mu},
\qquad \hbox{with $c_{\lambda\mu}(q^{-1})\in \ZZ[q^{-1}]$.}
$$ 
Then $c_{\la\mu}(q^{-1})\neq 0$ if and only if $\mu\in\Pi_{\la}$.
\end{thmS}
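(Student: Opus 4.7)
The plan is to prove the two implications separately, exploiting the fact that $P_\la(x;q^{-1})$ is a $q^{-1}$-deformation of the Weyl character whose behavior at $q^{-1}=0$ is controlled by Theorem R. I would first dispose of the ``only if'' direction via a support/triangularity argument, and then obtain the ``if'' direction by specializing $q^{-1}=0$.

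For the forward direction, I would use the defining formula (\ref{eq:Pladefn}) to observe that $P_\la(x;q^{-1})$ is $W_0$-invariant and admits an upper-triangular expansion
$$
P_\la(x;q^{-1}) = m_\la(x) + \sum_{\substack{\nu\in P^+\\ \nu<\la}} d_{\la\nu}(q^{-1})\, m_\nu(x),
$$
where $<$ is the dominance order and $m_\nu=\sum_{\eta\in W_0\nu} x^\eta$. Since for each dominant $\nu\leq\la$ and each $w\in W_0$ one has $w\nu\in\la-Q^+$, the orbit $W_0\nu$ lies in $\bigcap_{w\in W_0}w(\la-Q^+)=\Pi_\la$. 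Thus the support of $P_\la(x;q^{-1})$ is contained in $\Pi_\la$, so $c_{\la\mu}(q^{-1})\neq0$ forces $\mu\in\Pi_\la$.

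For the reverse direction, I would specialize $q^{-1}=0$ in the Macdonald formula and invoke the classical identification
$$
P_\la(x;0) \;=\; \sum_{w\in W_0/W_{0,\la}} w\!\left(\frac{x^\la}{\prod_{\alpha\in R^+}(1-x^{-\alpha^\vee})}\right) \;=\; \chi_\la(x),
$$
where $\chi_\la$ is the Weyl character of the irreducible $\fg$-module $V(\la)$ (equivalently, the $t=0$ specialization of the Hall--Littlewood polynomial is the Schur/Weyl character). It follows that $c_{\la\mu}(0)=\dim V(\la)_\mu$, which by Theorem R is nonzero precisely when $\mu\in\Pi_\la$. Hence if $\mu\in\Pi_\la$ then the polynomial $c_{\la\mu}(q^{-1})\in\ZZ[q^{-1}]$ has nonzero constant term, and so is not the zero polynomial.

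The main obstacle is the specialization identity $P_\la(x;0)=\chi_\la(x)$: although classical, its derivation from (\ref{eq:Pladefn}) uses the Weyl denominator formula and is sensitive to the normalization of $P_\la$. An alternative, more combinatorial route that avoids this black box is to use an explicit expansion of $P_\la(x;q^{-1})$ over alcove walks or paths: each walk of type $\la$ ending at $\mu$ contributes a monomial in $q^{-1}$ of controlled sign, and exhibiting a single nonzero contribution for every $\mu\in\Pi_\la$ would give a direct, constructive proof of non-vanishing. This constructive route is in fact the main thrust of the paper, and it sharpens Theorem S by producing the walk itself.
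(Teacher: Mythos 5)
Your argument is correct, but it is essentially the classical derivation of Theorem~$\mathbf{S}$ from Theorem~$\mathbf{R}$ that the paper acknowledges in its introduction (``It is known that Theorem~$\mathbf{R}$ implies Theorems~$\mathbf{S}$, $\mathbf{B}$ and $\mathbf{G}$'') and then deliberately avoids. The paper instead proves Theorem~$\mathbf{S}$ by showing it is equivalent to the combinatorial Theorem~$\mathbf{C}$ via Schwer's path expansion~(\ref{eq:macpath}) --- non-vanishing of $c_{\la\mu}(q^{-1})$ is equivalent to $\cP(\vec\la)_{\mu}\neq\emptyset$ because every summand in~(\ref{eq:macpath}) is strictly positive at real $q>1$ --- and then proving the stronger Theorem~$\mathbf{C}'$ constructively: starting from the antidominant path $p_{w_0\la}$ and applying the root operators $\tilde e_{i_k}^{m_k}$ along the adapted-string sequence $\mu_0,\ldots,\mu_N$ of Lemma~\ref{lem:fall}, it exhibits an explicit $p\in\cP(\vec\la)_{\mu}$ of maximal dimension $\langle\la+\mu,\rho\rangle$. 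The paper's ``only if'' direction is likewise combinatorial (a Bruhat-order subword argument on the end alcove of a walk), whereas yours rests on the triangularity $P_\la=m_\la+\sum_{\nu<\la}d_{\la\nu}m_\nu$; that fact is classical but is itself a nontrivial consequence of~(\ref{eq:Pladefn}) (it requires the Weyl-denominator manipulation), so it should be justified rather than merely observed. What your route buys is brevity, at the cost of importing Theorem~$\mathbf{R}$ and the specialization $P_\la(x;0)=s_\la(x)$ as black boxes; what the paper's route buys is a proof independent of representation theory together with quantitative refinements your argument cannot see: the constant term of $c_{\la\mu}(q^{-1})$ is a positive integer, $c_{\la\mu}(q^{-1})\geq(1-q^{-1})^{\ell(w_0)}$ for real $q>1$, and the explicit path determines the string parameters of an MV-cycle (Corollary~\ref{cor:implications}).
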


\begin{thmB} Let $X$ be a thick affine building of type $W$. Let $S_0$ be a sector in an apartment $A_0$. Identify $A_0$ with $W$ such that $S_0$ is identified with~$w_0C_0$, 
where $w_0$ is the longest element of $W_0$ and $C_0$ is the Weyl chamber of $W$ which 
contains $1\in W$. 
Let $\rho_1$ be the retraction of $X$ onto $A_0$ centred at the chamber~$1$ and let 
$\rho_{-\infty}$ be the retraction onto $A_0$ centred at the sector~$S_0$. Then for $\la\in P^+$,
$$
\{x\in X\mid \rho_1(x)\in W_0\la \textrm{ and } \rho_{-\infty}(x)=\mu\}
\neq\emptyset
\qquad\textrm{if and only if}\qquad \mu\in \Pi_{\la}.
$$
\end{thmB}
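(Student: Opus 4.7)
The plan is to prove the two implications separately using the dictionary (developed in the appendix) between galleries in $X$ and folded alcove walks in $A_0$. The key facts are these: any gallery $\gamma$ in $X$ starting at the fundamental alcove is carried by $\rho_1$ (resp.\ $\rho_{-\infty}$) to a positively-folded alcove walk in $A_0$ of the same underlying type but folded with respect to the orientation centred at $1$ (resp.\ at the sector $S_0$); conversely, thickness of $X$ allows any positively-folded walk with respect to the $-\infty$ orientation to be lifted to an unfolded gallery in $X$ starting at $1$.

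\emph{Existence direction.} Assume $\mu\in\Pi_\la$. The main construction of the paper produces explicitly, from the data $(\la,\mu)$, a positively-folded alcove walk $p$ in $A_0$ which starts at $1$, has underlying type a reduced word for an element of $W$ with translation part $\la$, and whose $\rho_{-\infty}$-endpoint is $\mu$. Lifting $p$ to an unfolded gallery $\tilde{p}$ in $X$ produces a point $x$ at the end of $\tilde{p}$ with $\rho_{-\infty}(x)=\mu$ by construction; moreover $\rho_1(\tilde{p})$ is a minimal gallery ending at a vertex of the orbit $W_0\la$, so $\rho_1(x)\in W_0\la$.

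\emph{Constraint direction.} Conversely, assume the set is nonempty and pick $x$ in it. Choose an apartment $A$ of $X$ containing $x$ together with a subsector of $S_0$; the restriction $\rho_{-\infty}|_A$ is a type-preserving isomorphism $A\to A_0$ fixing that subsector, so $x$ is identified with $\mu$. Since $\rho_1(x)\in W_0\la$, the vertex $x$ has type $\la$ in the building, hence $\mu$ has type $\la$ in $A_0$. The containment $\mu\in\Pi_\la$ then follows from the standard ``shadow'' statement for retractions in affine buildings: any vertex of type $\la$ in $X$ has $\rho_{-\infty}$-image in $\Pi_\la$. This is proved by writing a minimal gallery from a face deep in $S_0$ to $x$, projecting by $\rho_{-\infty}$, and using convexity of the image together with the $W_0$-symmetry of $\Pi_\la$ to read off $w^{-1}\mu\in\la-Q^+$ for every $w\in W_0$.

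\emph{Main obstacle.} The main obstacle is the \emph{explicit} combinatorial construction in the existence direction. That some positively-folded walk with the required endpoints exists follows abstractly from Theorem~R via the Littelmann path model; the contribution here is an algorithmic production of such a walk directly from $(\la,\mu)$. I expect this to proceed by induction on $\la-\mu$ in the dominance order, with the inductive step selecting a wall to fold across by comparing $\mu$ with the faces of the Weyl chambers $wC_0$ for $w\in W_0$, and verifying at each step that the inductive hypothesis still applies to the new pair.
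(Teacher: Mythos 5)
Your existence direction is sound and is essentially the paper's route: Theorem~$\mathbf{C}'$ supplies the positively folded walk $p$ ending in $t_\mu W_0$, and thickness lets one unfold it to an honest gallery in $X$ of reduced type $\vec u\cdot\vec m_\lambda$ starting at $1$, whose endpoint $x$ then satisfies $\rho_{-\infty}(x)=\mu$ and $\rho_1(x)=u\lambda\in W_0\lambda$ because a non-stuttering gallery of reduced type is minimal. (There is a small technicality about $m_\lambda$ lying in $\tilde W$ rather than $W$, handled in the paper via the sheets $X\times\Omega$, but that is cosmetic.)

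The constraint direction, however, has a genuine gap. The ``standard shadow statement'' you invoke --- every vertex at vector distance $\lambda$ from the origin retracts under $\rho_{-\infty}$ into $\Pi_\lambda$ --- \emph{is} the nontrivial half of Theorem~$\mathbf{B}$ being proved, and your sketch does not establish it. If you project a minimal gallery issuing from a chamber deep in $S_0$, that gallery is carried by $\rho_{-\infty}$ to a \emph{minimal} gallery (the deep chamber and the target lie in a common apartment containing a subsector of $S_0$), so the projection only records the Weyl distance from the deep chamber; it carries no information about $\lambda=d(0,x)$, and no amount of ``convexity of the image'' recovers it. If instead you project a minimal gallery from $1$ to an alcove containing $x$, the image is a positively folded pregallery of type $\vec u\cdot\vec m_\lambda$, and the point is that its end alcove $v$ is a \emph{subexpression} of that type, hence $v\le um_\lambda$ in Bruhat order; one then still needs the combinatorial fact that $v\le w$ forces $v(0)\in\Pi_{w(0)^+}$ (the paper cites \cite[Proposition~3.1]{p2} for this). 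Neither step is supplied by the $W_0$-symmetry of $\Pi_\lambda$: a single gallery projection cannot ``read off $w^{-1}\mu\in\lambda-Q^+$ for every $w\in W_0$'' simultaneously, and each of these $|W_0|$ inequalities requires the Bruhat-order input above (or an equivalent folding argument). You should replace the convexity appeal with this subexpression argument, which is exactly how the paper handles the emptiness half of Theorem~$\mathbf{C}'$.
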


\begin{thmG} Let $G(\FF)$ denote the Chevalley group constructed from the 
data~$(R,P,\FF)$, where $\FF$ is a ring or field (cf. \cite{steinberg}). 
Let $k$ be a field and let $k((t))$ be the field of Laurent power series. 
Let $G=G(k((t)))$ be the loop group, $U^-$ the subgroup of~$G$ generated by the negative root subgroups $U_{-\alpha}=\{x_{-\alpha}(f)\mid f\in k((t))\}$ with $\alpha\in R^+$, and let $K=G(k[[t]])$, where $k[[t]]$ is the ring of Taylor series. Then for $\la\in P^+$,
$$
U^-t_{\mu}K\cap Kt_{\la}K\neq\emptyset\qquad\textrm{if and only if}\qquad \mu\in\Pi_{\la},
$$
where for $\nu\in P$, $t_{\nu}\in G$ is a ``diagonal matrix'' representing the translation $t_{\nu}\in P\rtimes W_0$.
\end{thmG}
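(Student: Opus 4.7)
The plan is to deduce Theorem G from Theorem B via the standard dictionary between the loop group $G = G(k((t)))$ and its Bruhat--Tits building. The Chevalley group structure over the non-archimedean local field $k((t))$ produces a thick affine building $X$ of type $W = Q \rtimes W_0$ on which $G$ acts, with $K = G(k[[t]])$ the stabilizer of a distinguished special vertex $x_0$. Fix a standard apartment $A_0$ through $x_0$ and identify $A_0$ with $W$ as in Theorem B, so that the fundamental alcove (stabilized by the Iwahori subgroup $I \subset K$) plays the role of the chamber~$1$ and $S_0 = w_0 C_0$ is the antidominant sector at infinity.

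With this setup in hand, translate each double-coset decomposition of $G$ into a statement about retractions on $X$. The Cartan decomposition $G = \bigsqcup_{\la \in P^+} K t_\la K$, together with the $K$-invariance of the chamber retraction $\rho_1$ on the orbit $G x_0$, yields
$$\rho_1(g x_0) \in W_0 \la \iff g \in K t_\la K.$$
The Iwasawa decomposition $G = \bigsqcup_{\mu \in P} U^- t_\mu K$, together with the fact that $U^-$ fixes pointwise a sub-sector of $S_0$ and hence acts trivially through $\rho_{-\infty}$, yields
$$\rho_{-\infty}(g x_0) = \mu \iff g \in U^- t_\mu K.$$
Combining these two equivalences, the intersection $U^- t_\mu K \cap K t_\la K$ is nonempty iff there exists a point $x = g x_0 \in X$ with $\rho_1(x) \in W_0 \la$ and $\rho_{-\infty}(x) = \mu$; by Theorem B, this is equivalent to $\mu \in \Pi_\la$.

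The main obstacle, assuming Theorem B, is to make the second translation identity rigorous: the $U^-$-invariance of $\rho_{-\infty}$, i.e., the assertion that for every $u \in U^-$ the apartment $u A_0$ contains a common sub-sector with $S_0$. This is the classical fact that the negative unipotent radical fixes the antidominant end at infinity: for any finite collection of elements $x_{-\alpha}(f) \in U^-$, the $k[[t]]$-valuations of the parameters $f \in k((t))$ force these elements to act trivially on a sufficiently deep translate $\nu + S_0$ of the antidominant sector, and $\rho_{-\infty}$ is the identity on any such translate. Once this lemma is granted, the reverse direction of Theorem G follows from the constructive content of Theorem B, and the forward direction is the elementary convexity statement that any point of $X$ whose chamber retraction hits $W_0 \la$ has sector retraction lying in $\Pi_\la$.
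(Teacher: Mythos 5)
Your argument is correct, but it takes a genuinely different route from the paper's. You deduce Theorem~$\mathbf{G}$ from Theorem~$\mathbf{B}$ by realising $X$ as the Bruhat--Tits building of $G(k((t)))$ and translating the Cartan and Iwasawa decompositions into the two retractions; the key technical point you isolate --- that each $x_{-\alpha}(f)$ with $\alpha\in R^+$ fixes pointwise a half-apartment containing a subsector of $S_0=w_0C_0$, whence $\rho_{-\infty}(ux)=\rho_{-\infty}(x)$ for $u\in U^-$ and the Iwasawa decomposition is disjoint --- is exactly the right one, and thickness is automatic since $|k|\geq 2$. (One small imprecision: $\rho_1$ is centred at the fundamental chamber and is only $I$-invariant, not $K$-invariant; what the Cartan decomposition actually gives is that $\rho_1$ preserves the vertex-distance from $x_0$, and this is what yields $\rho_1(gx_0)\in W_0\la\iff g\in Kt_\la K$.) This is the implication $\mathbf{B}\Rightarrow\mathbf{G}$ that the introduction mentions, citing \cite{petra}, but deliberately does not use. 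The paper instead proves Theorem~$\mathbf{G}$ by showing it is equivalent to Theorem~$\mathbf{C}$ directly at the group level: writing $K=\bigsqcup_{w\in W_0}IwI$, it decomposes $U^-t_{\mu}K\cap Kt_{\la}K$ into pieces $U^-vI\cap IwI$ with $v\in t_{\mu}W_0$ and $w\in W_0^{\la}m_{\la}$, and applies the bijection of \cite{ramparkinsonschwer} between the $G/I$-points of such an intersection and labelled positively folded alcove walks, obtaining a bijection between $K$-cosets in $U^-t_{\mu}K\cap Kt_{\la}K$ and labelled walks in $\cP(\vec\la)_{\mu}$. That finer, point-level parametrisation is what the paper needs later for Corollary~\ref{cor:implications}(c) (the cell $\CC^{\epsilon^+(p)}\times(\CC^{\times})^{f(p)}$ inside an MV-cycle and its string parameters); your route establishes nonemptiness cleanly but discards that information. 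There is no circularity in your approach, since the paper proves Theorem~$\mathbf{B}$ from Theorem~$\mathbf{C}'$ without reference to Theorem~$\mathbf{G}$.
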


It is known that Theorem~$\mathbf{R}$ implies Theorems~$\mathbf{S}$, $\mathbf{B}$ and 
$\mathbf{G}$, and also that Theorems~$\mathbf{S}$, $\mathbf{B}$ and $\mathbf{G}$ are equivalent 
to each other (see Proposition~\ref{prop:equiv} below). At first glance this might seem surprising. 
For example, to see that Theorem~$\mathbf{B}$ implies Theorem~$\mathbf{G}$ take $X$ to be the 
affine building of $G(k((t)))$ and interpret the intersection of double cosets in Theorem~$\mathbf{G}$ 
in terms of retractions in $X$ (as in \cite[Corollary~4.1]{petra}). However,  the reverse implication is not obvious for not all affine 
buildings can be constructed from group theory. The key to understanding the relationships between Theorems~$\mathbf{R}$, $\mathbf{S}$, $\mathbf{B}$ and $\mathbf{G}$ comes from the combinatorial theory of~\textit{path models}/\textit{alcove walks}. The statement which is equivalent to Theorems~$\mathbf{S}$, $\mathbf{B}$ and $\mathbf{G}$ in the alcove walk language is:

\begin{thmC} Let $\la\in P^+$ and fix a minimal length alcove walk $\vec m_{\la}$ to the minimal length element $m_{\la}\in W_0t_{\la}W_0$. Let $W^{\la}_0$ be a set of minimal length representatives for cosets in $W_0/W_{0\la}$, where $W_{0\la}=\{w\in W_0\mid w\la=\la\}$, and fix a minimal length walk $\vec u$ to each $u\in W_0^{\la}$. For $\mu\in P$ let
$$
\cP(\vec\la)_{\mu}=\left\{\begin{matrix}\textrm{positively folded alcove walks of type}\\
\textrm{$\vec u\cdot\vec m_{\la}$ with end alcove in $t_{\mu}W_0$}\end{matrix} \,\,\bigg|\,\, u\in W_0^{\la}\right\}.
$$
Then $\cP(\vec\la)_{\mu}\neq\emptyset$ if and only if $\mu\in\Pi_{\la}$.
\end{thmC}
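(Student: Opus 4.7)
I would prove the two directions separately; the ``if'' direction carries the substantive content.

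For the ``only if'' direction I would invoke Theorem $\mathbf{B}$ (or, alternatively, Theorem $\mathbf{S}$) via the correspondence between alcove walks and gallery retractions to be developed in the appendix. A positively folded alcove walk of type $\vec u \cdot \vec m_\la$ ending in $t_\mu W_0$ lifts, in any thick affine building $X$ of type $W$, to a gallery whose endpoint $x$ satisfies $\rho_1(x) \in W_0\la$ and $\rho_{-\infty}(x) = \mu$; Theorem $\mathbf{B}$ then forces $\mu \in \Pi_\la$. A second route, via Theorem $\mathbf{S}$, would use the Schwer/Ram formula expressing $c_{\la\mu}(q^{-1})$ as a manifestly positive sum over $\cP(\vec\la)_\mu$: non-emptiness of $\cP(\vec\la)_\mu$ forces $c_{\la\mu}(q^{-1}) \neq 0$ with no possible cancellation.

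For the ``if'' direction I would give an explicit inductive construction. The base case $\mu \in W_0\la$ is immediate: if $u \in W_0^\la$ satisfies $u\la = \mu$, the unfolded walk $\vec u \cdot \vec m_\la$ ends in $t_{u\la}W_0 = t_\mu W_0$ and lies trivially in $\cP(\vec\la)_\mu$. For general $\mu \in \Pi_\la$ I would induct on the dominance order using the standard saturated-set property that any non-extremal $\mu \in \Pi_\la$ may be written $\mu = \nu - \alpha_i^\vee$ with $\nu \in \Pi_\la$, $\nu > \mu$, and some simple coroot $\alpha_i^\vee$. Given a walk $p_\nu \in \cP(\vec\la)_\nu$ by induction, I would construct $p_\mu$ by inserting a single positive fold into $p_\nu$ across a judiciously chosen hyperplane of $\alpha_i$-type, so that the endpoint shifts by exactly $-\alpha_i^\vee$.

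The main obstacle is the folding step. One must locate, in $p_\nu$, a crossing of a hyperplane $\langle x,\alpha_i\rangle = k$ on its positive side such that folding there produces a walk which is (i) still positively folded, (ii) still of type $\vec u' \cdot \vec m_\la$ for some $u' \in W_0^\la$ (possibly different from the original $u$), and (iii) ends in $t_\mu W_0$. Tracking how a single fold propagates through the remainder of the walk, and ensuring that the resulting type is preserved in the required $\vec u'\cdot \vec m_\la$ form, is delicate. I anticipate that the bulk of the work lies in isolating a clean induction invariant, in choosing the location of the fold correctly (likely inside the $\vec m_\la$ part, exploiting the freedom in the reduced expression for $m_\la$ rather than the rigid initial segment $\vec u$), and in the accompanying case analysis when $p_\nu$ happens to contain no admissible crossing of the desired type and one must first rearrange earlier folds.
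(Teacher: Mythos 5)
Your ``only if'' argument is circular within this paper's logical structure: Theorems~$\mathbf{B}$ and~$\mathbf{S}$ are precisely the statements that Proposition~\ref{prop:equiv} derives \emph{from} Theorem~$\mathbf{C}$, so neither may be invoked to prove it (unless you supply an independent proof of their ``only if'' halves, which you do not). The paper's argument is instead a short direct one: the end alcove $v$ of a positively folded walk of type $\vec w$ is reached by a subexpression of $\vec w$, hence $v\leq w$ in Bruhat order, and $v\leq w$ together with $w(0)\in\Pi_{\la}$ forces $\mu=v(0)\in\Pi_{\la}$. That is the observation your proposal is missing.

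For the ``if'' direction the shape of your plan (explicit construction by folding, induction between $\mu$ and the extremal weights) is right, but the mechanism you describe does not work and the two ideas that make it work are absent. Inserting a \emph{single} positive fold at a hyperplane $H_{\alpha_i+k\delta}$ reflects the entire tail of the walk, so the endpoint moves from $\nu$ to $s_{\alpha_i+k\delta}(\nu)=\nu-(\langle\nu,\alpha_i\rangle+k)\alpha_i^{\vee}$ --- a reflection, not a translation by $-\alpha_i^{\vee}$ --- and there is no reason the reflected tail remains positively folded. The paper's root operators $\tilde e_i$ resolve this by a coordinated surgery at \emph{two} locations (the critical negative crossing becomes a fold, and a subsequent fold on $H_{\gamma+\delta}$ or positive crossing on $H_{\gamma}$ is simultaneously converted), so that the net effect on the endpoint is the translation $t_{\alpha_i^{\vee}}$ while positive foldedness, type, and dimension are controlled (Proposition~\ref{prop:rt}, Corollary~\ref{cor:push}). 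Equally essential, and missing from your plan, is the choice of descent sequence: one does not induct along arbitrary single coroot steps in dominance order, but along the adapted string $\mu=\mu_0,\ldots,\mu_N=w_0\la$ attached to a reduced word $w_0=s_{i_1}\cdots s_{i_N}$, with $\mu_k=\mu_{k-1}-m_k\alpha_{i_k}^{\vee}$ and $m_k$ \emph{maximal} (Lemma~\ref{lem:fall}). That maximality is exactly what guarantees that any walk ending at $\mu_k$ crosses the hyperplanes $H_{\beta_k},\ldots,H_{\beta_k+(m_k-1)\delta}$ negatively, so $\tilde e_{i_k}$ can be applied $m_k$ times without killing the walk. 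Finally, the paper builds \emph{up} from the single antidominant walk $p_{w_0\la}$ rather than down from the extremal weights; this sidesteps the ``rearrange earlier folds'' case analysis you anticipate, for which your formulation gives no invariant and no termination argument.
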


In this paper we use ideas from the theory of crystal bases (see \cite{littelmanncrystals}) to give a simple combinatorial proof of Theorem~$\mathbf{C}$. In fact we prove the following stronger theorem, which is the combinatorial equivalent to Theorem~$\mathbf{R}$.

\begin{thmCd} If $\mu\not\in \Pi_{\la}$ then $\cP(\vec\la)_{\mu}=\emptyset$ and if $\mu\in\Pi_{\la}$ then there exists a path $p\in\cP(\vec\la)_{\mu}$ with ``optimal dimension'' $\langle\la+\mu,\rho\rangle$, where $\rho=\frac{1}{2}\sum_{\alpha\in R^+}\alpha$.
\end{thmCd}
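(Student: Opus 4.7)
\textbf{Proof plan for Theorem $\mathbf{C}'$.}

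The implication $\mu \not\in \Pi_\lambda \Rightarrow \cP(\vec\lambda)_\mu = \emptyset$ I would deduce either from the already-stated equivalence of Theorem $\mathbf{C}$ with Theorems $\mathbf{S}$, $\mathbf{B}$, $\mathbf{G}$ (via Proposition~\ref{prop:equiv}) together with Theorem $\mathbf{R}$, or by a direct inductive argument tracking the endpoint of a partial walk: each positive crossing or positive fold of an alcove walk of type $\vec u \cdot \vec m_\lambda$ keeps the running endpoint inside $\mathrm{conv}(W_0\lambda)$, so that the final endpoint is forced into $\Pi_\lambda$. This direction does not interact with the dimension refinement.

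The substantive claim is to produce, for every $\mu \in \Pi_\lambda$, a positively folded walk $p \in \cP(\vec\lambda)_\mu$ realizing the optimal dimension $\langle\lambda+\mu,\rho\rangle$. I would argue by downward induction on $\langle\lambda-\mu,\rho\rangle$. The base case $\mu = \lambda$ is handled by taking $u = 1$ and the unfolded walk $\vec m_\lambda$ itself, which ends in $t_\lambda W_0$ and tautologically realizes the top dimension $2\langle\lambda,\rho\rangle$. For the inductive step at $\mu \ne \lambda$, a standard property of saturated sets produces a simple coroot $\alpha_i^\vee$ with $\mu + \alpha_i^\vee \in \Pi_\lambda$, and the inductive hypothesis supplies $p' \in \cP(\vec\lambda)_{\mu+\alpha_i^\vee}$ of dimension $\langle\lambda+\mu,\rho\rangle + 1$.

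The heart of the proof is to define a Littelmann-style lowering operator $\tilde f_i$ on positively folded alcove walks that sends $p'$ to a walk $p \in \cP(\vec\lambda)_\mu$. Following the Littelmann path-model paradigm I would locate the last position along $p'$ at which the $\alpha_i$-height attains its minimum, reflect the tail of the walk past that point across the corresponding affine hyperplane $H_{\alpha_i,k}$, and register the reflection-point as a new positive fold, adjusting the initial preamble $\vec u$ (when the minimum lies in the preamble) so that the new walk still has the form $\vec{u'}\cdot\vec m_\lambda$ for some $u' \in W_0^\lambda$. One must then check that (a)~the modified walk is again positively folded and of the required type, and (b)~exactly one new positive fold is introduced, so that the dimension decreases by precisely $\langle\alpha_i^\vee,\rho\rangle = 1$, matching $\langle\lambda+(\mu+\alpha_i^\vee),\rho\rangle - \langle\lambda+\mu,\rho\rangle = 1$.

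The principal obstacle is verifying (a): reflecting the tail must not spoil positive foldedness of the folds already present in $p'$, and any adjustment of $\vec u$ must still yield a minimal gallery to some element of $W_0^\lambda$. This is where the crystal-theoretic input is indispensable: the $\alpha_i$-string structure ensures that $\tilde f_i$ and its companion $\tilde e_i$ behave compatibly with folding, so that iterated application of $\tilde f_i$'s drives $\lambda$ to any prescribed $\mu \in \Pi_\lambda$ via exactly $\langle\lambda-\mu,\rho\rangle$ folds, producing the required walk of optimal dimension.
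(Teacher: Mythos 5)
Your overall idea---build the optimal path by moving one simple coroot at a time with root operators---is the right family of ideas, but the induction as you have set it up fails at both ends. The base case is false: the unfolded walk of type $\vec m_{\la}$ with $u=1$ consists of $\ell(m_{\la})$ positive crossings, so its dimension is $\ell(m_{\la})=\ell(t_{\la})-\ell(w_0w_{0\la})$, which is strictly smaller than $2\langle\la,\rho\rangle=\ell(t_{\la})$ whenever $\la\neq 0$ (in type $A_2$ with $\la=\omega_1+\omega_2$ it has dimension $1$, not $4$). By Lemma~\ref{lem:bound} and Remark~\ref{rem:folds}, every path of optimal dimension must have $\ell(p)=\ell(t_{\la})$, hence type $\vec u\cdot\vec m_{\la}$ with $u$ the \emph{longest} element of $W_0^{\la}$; the optimal path at $\mu=\la$ is the maximally folded one with $f(p)=\ell(w_0)$. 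Since the root operators preserve the type of a walk, no sequence of them applied to your base-case path (type $\vec m_{\la}$, $u=1$) can ever produce an optimal path. The walk that is ``tautologically optimal'' sits at the opposite end: the unfolded antidominant walk $p_{w_0\la}$, with $\dim(p_{w_0\la})=0=\langle\la+w_0\la,\rho\rangle$. This is why the paper runs the induction upward from $w_0\la$ using the raising operators $\tilde e_i$, rather than downward from $\la$ with lowering operators.

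The inductive step also has a genuine gap. Even granting a simple coroot with $\mu+\alpha_i^{\vee}\in\Pi_{\la}$, the inductive hypothesis hands you an \emph{arbitrary} optimal path $p'$ ending in $t_{\mu+\alpha_i^{\vee}}W_0$, and there is no reason why $\tilde f_i(p')\neq0$, nor why it should end in $t_{\mu}W_0$ rather than $t_{\mu+\alpha_i^{\vee}}W_0$ (the lowering operator, like $\tilde e_i$, has a case that does not change the end weight). Already for $\la=\omega_1+\omega_2$ in type $A_2$ the weight $0$ carries two optimal paths, one of which lies on a trivial $\alpha_1$-string and is killed by $\tilde f_1$ even though $-\alpha_1^{\vee}\in\Pi_{\la}$; if the induction hands you that path, it stalls. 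Repairing this requires choosing the entire sequence of weights and operator exponents globally so that the operators provably never vanish, and that is precisely what the paper does: Lemma~\ref{lem:fall} (the adapted string $\mu_k=\mu_{k-1}-m_k\alpha_{i_k}^{\vee}$ with $m_k$ maximal, taken along a reduced word for $w_0$) combined with the observation at~(\ref{eq:pict}) that any walk ending at $\mu_k$ must make $m_k$ negative crossings on $H_{\beta_k},\dots,H_{\beta_k+(m_k-1)\delta}$, which guarantees $\tilde e_{i_k}^{m_k}(p_k)\neq0$. Finally, for the easy direction you should not invoke Theorem~$\mathbf{R}$ (that defeats the point of a combinatorial proof), and your convexity sketch needs care since walks also make negative crossings; the paper instead notes that the end alcove of a positively folded walk of type $\vec w$ is a subexpression of $\vec w$, hence $\leq w$ in Bruhat order, which forces its weight into $\Pi_{\la}$.
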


A motivation for providing this proof is that, as illustrated in the proof of Proposition~\ref{prop:equiv}, we see the theory of path combinatorics as an integral thread holding together the building theory, symmetric function theory, geometry, and representation theory. Thus it is desirable to see the proofs of Theorems~$\mathbf{S}$, $\mathbf{B}$, $\mathbf{G}$, and~$\mathbf{R}$ come out of this central theory. Furthermore, we are interested here in giving a proof of Theorem~$\mathbf{C}$ that can be translated into abstract building theory. For this purpose we discuss the connection between labelled alcove walks and retractions in abstract buildings in Appendix~\ref{app:buildings}.

Currently most of the existing proofs of Theorems~$\mathbf{S}$, $\mathbf{B}$ or~$\mathbf{G}$ appeal to Theorem~$\mathbf{R}$ (which appears in many introductory books on representation theory, for example \cite[Proposition~21.3]{h2}). For example, the proof of Theorem~$\mathbf{B}$ in \cite{petra} uses Theorem~$\mathbf{R}$ and the character formulae from~\cite{litt}, and during the proof of \cite[Theorem~3.2]{MV3}, Theorem~$\mathbf{G}$ is applied with Theorem~$\mathbf{R}$ given as the reference. The proofs of Theorems~$\mathbf{S}$, $\mathbf{B}$ or~$\mathbf{G}$ that are of a combinatorial flavour are generally quite complex. For example, the proofs of Theorem~$\mathbf{S}$ in the series of papers \cite{haines}, \cite{rapoport} and \cite{tupan} use fairly specific details about groups related to those in Theorem~$\mathbf{G}$.

Our constructive proof of Theorem~$\mathbf{C}'$ gives a lower bound for the cardinality of the set in Theorem~$\mathbf{B}$, and a dense subset of a Mirkovi\'{c}-Vilonen cycle with ``explicit'' string parameters in Theorem~$\mathbf{G}$, and, in the case of Theorem~$\mathbf{S}$,  a lower bound 
for $c_{\lambda\mu}(q^{-1})$ when $q\in\RR_{>1}$ (see Corollary~\ref{cor:implications}).

J. Parkinson is supported under the Austrian Science Fund grant FWF-P18703-N18, and thanks the Technische Universit\"{a}t Graz for its hospitality in supporting his research. This research was also partially supported by the National
Science Foundation (NSF) under grant DMS-0353038 at the University of Wisconsin, Madison,
and this paper was completed while A. Ram was in residence at  the
special semester in Combinatorial Representation Theory at Mathematical Sciences
Research Institute (MSRI).  It is a pleasure to thank MSRI for hospitality, support and a wonderful
and stimulating working environment. Finally, J. Parkinson thanks the organisers of the workshop \textit{Buildings: Interactions with Algebra and Geometry}, Oberwolfach, January 2008, which motivated the publication of this paper.


\section{Equivalence of the theorems}

Before continuing with the main part of the paper let us briefly survey how Theorems~$\mathbf{R}$, $\mathbf{S}$, $\mathbf{B}$ and~$\mathbf{G}$ are related to Theorems~$\mathbf{C}$ and~$\mathbf{C}'$. 
The language of positively folded alcove walks is reviewed in Section~\ref{section:alcovewalks}. 

\begin{prop}\label{prop:equiv} Theorems~$\mathbf{S}$, $\mathbf{B}$ and $\mathbf{G}$ are all equivalent to Theorem~$\mathbf{C}$, and Theorem~$\mathbf{R}$ is equivalent to Theorem~$\mathbf{C}'$.
\end{prop}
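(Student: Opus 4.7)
The plan is to prove the four equivalences by exhibiting, for each of Theorems $\mathbf{S}$, $\mathbf{B}$, $\mathbf{G}$, $\mathbf{R}$, an explicit combinatorial expression writing the object of interest (a coefficient, a double coset intersection, or a weight space dimension) as a nonnegative sum indexed by positively folded alcove walks in $\cP(\vec\la)_\mu$. Nonvanishing then translates directly into $\cP(\vec\la)_\mu \neq \emptyset$, yielding equivalence with Theorem $\mathbf{C}$. For Theorem $\mathbf{R}$ the same idea refines to an identification of $\dim V(\la)_\mu$ with the number of optimal-dimension walks, giving equivalence with Theorem $\mathbf{C}'$.

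For Theorem $\mathbf{S}$ $\Leftrightarrow$ Theorem $\mathbf{C}$ I would apply the alcove walk formula for the Macdonald spherical function. Expanding $P_\la(x, q^{-1})$ along the concatenated walks $\vec u \cdot \vec m_\la$ for $u \in W_0^\la$ produces an identity of the form
\begin{equation*}
c_{\la\mu}(q^{-1}) = \sum_{p \in \cP(\vec\la)_\mu} q^{-a(p)}(1-q^{-1})^{f(p)},
\end{equation*}
where $a(p)$ and $f(p)$ are nonnegative integer statistics; every term is strictly positive for real $q>1$, so the coefficient vanishes iff the index set is empty. Theorem $\mathbf{B}$ $\Leftrightarrow$ Theorem $\mathbf{C}$ reduces to the dictionary (reviewed in the appendix) between positively folded alcove walks and galleries in a thick affine building: the preimages under $\rho_1$ of a fixed minimal gallery ending in $W_0t_\la W_0$ correspond bijectively to positively folded walks of the prescribed type, and applying $\rho_{-\infty}$ to such a gallery yields the endpoint of the walk in the standard apartment; thickness of $X$ guarantees that every positively folded walk is realised by at least one gallery. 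Theorem $\mathbf{G}$ $\Leftrightarrow$ Theorem $\mathbf{B}$ is then classical: taking $X$ to be the Bruhat--Tits building of $G(k((t)))$, the double cosets $K t_\la K$ and the $U^-$-orbits $U^- t_\mu K$ parametrise the fibres of $\rho_1$ and $\rho_{-\infty}$ respectively, so their intersection is nonempty iff the two retraction fibres meet in $X$.

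For Theorem $\mathbf{R}$ $\Leftrightarrow$ Theorem $\mathbf{C}'$ I would appeal to the alcove walk version of the Littelmann path model: $\dim V(\la)_\mu$ equals the number of Littelmann paths of shape $\la$ and weight $\mu$, which are in natural bijection with the positively folded alcove walks in $\cP(\vec\la)_\mu$ of optimal dimension $\langle\la+\mu,\rho\rangle$. Consequently $\dim V(\la)_\mu \neq 0$ iff such an optimal walk exists, which is precisely the refined nonemptiness statement of Theorem $\mathbf{C}'$.

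The main obstacle is bookkeeping rather than substance. The four theorems live in very different languages, and to make the alcove walk picture speak to all four simultaneously one must carefully normalise the notions of walk type, fold, weight, and dimension so that the respective combinatorial formulas produce the same index set $\cP(\vec\la)_\mu$. Once these conventions are fixed, each equivalence is an application of a known formula, and the genuine content of the paper will lie in the constructive proof of Theorem $\mathbf{C}'$ itself, not in this proposition.
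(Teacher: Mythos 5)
Your proposal is correct, and for Theorems~$\mathbf{S}$ and~$\mathbf{B}$ it follows the paper's argument essentially verbatim: Schwer's formula~(\ref{eq:macpath}) expresses $c_{\la\mu}(q^{-1})$ as a sum of terms $q^{-(\langle\la+\mu,\rho\rangle-\dim(p))}(1-q^{-1})^{f(p)}$ over $p\in\cP(\vec\la)_{\mu}$, each positive for real $q>1$, and the gallery/retraction dictionary (with thickness supplying the third chamber at each fold, made precise in Appendix~\ref{app:buildings}) handles~$\mathbf{B}$. Where you genuinely diverge is in the remaining two equivalences. For~$\mathbf{G}$ the paper does not pass through the building at all: it invokes the bijection of \cite[Theorem~7.1]{ramparkinsonschwer} between the $G/I$-points of $U^-vI\cap IwI$ and labelled positively folded alcove walks, and assembles $U^-t_{\mu}K\cap Kt_{\la}K$ from these Iwahori pieces via $K=\bigsqcup_{w\in W_0}IwI$. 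Your route $\mathbf{G}\Leftrightarrow\mathbf{B}(\textrm{building of }G(k((t))))\Leftrightarrow\mathbf{C}$ is logically sound --- the caveat in the introduction, that not every affine building arises from a group, is neutralised because your $\mathbf{B}\Leftrightarrow\mathbf{C}$ step applies to each thick building individually, in particular to the loop group building --- but the paper's route buys more, namely an explicit parametrisation of the $K$-cosets in the intersection, which is what powers part~(c) of Corollary~\ref{cor:implications}. For~$\mathbf{R}$ the paper simply specialises~(\ref{eq:macpath}) at $q^{-1}=0$, so that only paths with $\dim(p)=\langle\la+\mu,\rho\rangle$ survive and $\dim V(\la)_{\mu}$ is read off as their number; your appeal to the Littelmann path/LS-gallery bijection is equivalent but imports an external result from~\cite{litt} where the specialisation is self-contained given Appendix~\ref{app:pathformula}. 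One point worth making explicit in your write-up: the claim that the exponent $a(p)=\langle\la+\mu,\rho\rangle-\dim(p)$ is a \emph{nonnegative} integer is not automatic --- it is Lemma~\ref{lem:bound} --- although for the bare nonvanishing argument you only need each summand to be a positive real number for $q>1$, which holds regardless of the sign of $a(p)$.
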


\begin{proof} \textbf{Theorem~$\mathbf{S}$ $\Longleftrightarrow$ Theorem $\mathbf{C}$.} We will assume for simplicity that we are in the case of one parameter~$q$, although this assumption is easily removed. The Macdonald spherical function $P_{\la}(x,q^{-1})$ is defined by
\begin{align}\label{eq:Pladefn}
P_{\la}(x,q^{-1})=\frac{1}{W_{0\la}(q^{-1})}\sum_{w\in W_0}w\bigg(x^{\la}\prod_{\alpha\in R^+}\frac{1-q^{-1}x^{-\alpha^{\vee}}}{1-x^{-\alpha^{\vee}}}\bigg),
\end{align}
where $W_{0\la}=\{w\in W_0\mid w\la=\la\}$ and $W_{0\la}(q^{-1})=\sum_{w\in W_{0\la}}q^{-\ell(w)}$. The equivalence between Theorem~$\mathbf{S}$ and Theorem~$\mathbf{C}$
follows from the following formula of Schwer \cite{schwer} (also see \cite{ram2})
\begin{align}\label{eq:macpath}
P_{\la}(x,q^{-1})=\sum_{\mu\in P}\bigg(\sum_{p\in\cP(\vec\la)_{\mu}}q^{-(\langle\la+\mu,\rho\rangle-\dim(p))}(1-q^{-1})^{f(p)}\bigg)x^{\mu},
\end{align} 
where $f(p)$ is the number of folds in $p$, and 
$\dim(p)$ is the number of positive crossings in $p$ plus the number of folds in $p$.  
As observed in \cite[(4.5)]{schwer}, the formula (\ref{eq:macpath}) shows that 
$\dim(p)\leq\langle\la+\mu,\rho\rangle$ for all $p\in\cP(\vec\la)_{\mu}$. 
We will give a combinatorial proof of this fact in Lemma~\ref{lem:bound} 
(see also \cite[\S5 Proposition~4]{litt}). We sketch a proof of~(\ref{eq:macpath}) in
Appendix~\ref{app:pathformula}.

\smallskip

\noindent\textbf{Theorem~$\mathbf{B}$ $\Longleftrightarrow$ Theorem $\mathbf{C}$.} 
Let $\cG(\vec\la)$ be the set of all galleries in~$X$ of types $\vec u\cdot\vec m_{\la}$ with 
$u\in W_0^{\la}$ starting at the chamber $1$ (there is a slight complication here 
because $m_{\la}\in P\rtimes W_0$ is not necessarily in $W$; 
see the proof of Theorem~\ref{thm:retractions}). 
Mapping galleries to their end vertices gives a bijection between $\cG(\vec\la)$ and 
$\{x\in X\mid\rho_1(x)\in W_0\la\}$. By \cite[Proposition~3.3]{petra} (see also Appendix~\ref{app:buildings}), 
$\rho_{-\infty}(\cG(\vec\la))=\cP(\vec\la)$, where $\cP(\vec\la)$ is the set of all positively folded 
alcove walks of types $\vec u\cdot\vec m_{\la}$ with $u\in W_0^{\la}$. The thickness of the building 
is crucial here to guarantee the existence of the third chamber hanging off a panel that 
``folds back'' under the retraction~$\rho_{-\infty}$. We make the arguments here more 
precise in Appendix~\ref{app:buildings}.

\smallskip

\noindent\textbf{Theorem~$\mathbf{G}$ $\Longleftrightarrow$ Theorem $\mathbf{C}$.} Let $B(k)$ be the standard Borel subgroup of $G(k)$, and let $I=\mathrm{ev}_{t=0}^{-1}(B(k))$ be the standard Iwahori subgroup of $G(k((t)))$, where $\mathrm{ev}_{t=0}:k[[t]]\to k$ is evaluation at~$t=0$. Theorem~7.1 in \cite{ramparkinsonschwer} gives a bijection between the $G/I$-points of $U^-vI\cap IwI$ (with $v,w\in P\rtimes W_0$) and ``labelled'' positively folded alcove walks of type $\vec w$ with end alcove~$v$. Using the decomposition $K=\bigsqcup_{w\in W_0}IwI$ one has
$$
U^-t_{\mu}K\cap Kt_{\la}K=\bigsqcup\, U^-vI\cap IwI,
$$
where the union is over $v\in t_{\mu}W_0$ and $w\in W_0^{\la}m_{\la}$. Therefore the bijection in \cite{ramparkinsonschwer} gives a bijection between $K$ cosets in $U^-t_{\mu}K\cap Kt_{\la}K$ and labelled positively folded alcove walks in~$\cP(\vec\la)_{\mu}$. This bijection between $K$ cosets is the same as that which follows from the results in~\cite{litt}.

\smallskip

\noindent\textbf{Theorem~$\mathbf{R}$ $\Longleftrightarrow$ Theorem $\mathbf{C}'$.} The Weyl character is $P_\lambda(x;0) = s_\lambda(x)
= \sum_\mu \dim(V(\lambda)_\mu) x^\mu$.  Specialising (\ref{eq:Pladefn}) at $q^{-1}=0$ gives a path formula for $s_\lambda(x)$,
and only those paths with dimension $\dim(p) = \langle \lambda+\mu,\rho\rangle$ survive the specialisation.
\end{proof}


\section{Alcove walks}\label{section:alcovewalks}

We begin this section by recalling the standard geometric interpretation of affine Weyl groups from~\cite{bourbaki}. We then define positively folded alcove walks and root operators following~\cite{litt} and~\cite{ram2}. 
A few elementary lemmas and propositions provide the background for the proof of Theorem~$\mathbf{C}'$ in the next section.

\subsection{The geometry of affine Weyl groups}\label{subsect:geom}

Let $R$ be a reduced irreducible root system in an $n$-dimensional real vector space $\fh_{\RR}$ with inner product $\langle\cdot,\cdot\rangle$. For $\alpha\in\fh_{\RR}\backslash\{0\}$ let $\alpha^{\vee}=\frac{2\alpha}{\langle\alpha,\alpha\rangle}$. Let $\{\alpha_1,\ldots,\alpha_n\}$ be a set of simple roots of~$R$, and let $R^+$ be the associated set of positive roots. The \textit{coroot lattice} is $Q=\ZZ\alpha_1^{\vee}+\cdots+\ZZ\alpha_n^{\vee}$, and the coweight lattice is $P=\ZZ\omega_1+\cdots+\ZZ\omega_n$, where $\{\omega_1,\ldots,\omega_n\}$ is the basis of $\fh_{\RR}$ defined by $\langle\omega_i,\alpha_j\rangle=\delta_{ij}$ for $1\leq i,j\leq n$. The set of \textit{dominant coweights} is $P^+=\ZZ_{\geq0}\omega_1+\cdots+\ZZ_{\geq0}\omega_n$. There is a unique \textit{highest root} $\theta$ of $R$ satisfying $\langle\omega_i,\theta\rangle\geq\langle\omega_i,\alpha\rangle$ for all $1\leq i\leq n$ and all $\alpha\in R$.

For each $\alpha\in R$ define a linear hyperplane $H_{\alpha}=\{\la\in\fh_{\RR}\mid\langle\la,\alpha\rangle=0\}$. The orthogonal reflection in the hyperplane $H_{\alpha}$ is $s_{\alpha}(\la)=\la-\langle\la,\alpha\rangle\alpha^{\vee}$, and the \textit{Weyl group} $W_0$ of $R$ is the subgroup of $GL(\fh_{\RR})$ generated by $\{s_{\alpha}\mid\alpha\in R\}$. The Weyl group is a finite Coxeter group with distinguished generators $s_1,\ldots,s_n$ (where $s_i=s_{\alpha_i}$) and thus has a \textit{length function} $\ell:W_0\to\ZZ_{\geq0}$, with $\ell(w)$ being the smallest $\ell\geq0$ such that $w=s_{i_1}\cdots s_{i_{\ell}}$. Let $w_0$ be the longest element of~$W_0$.

The open connected components of $\fh_{\RR}\backslash\bigcup_{\alpha\in R}H_{\alpha}$ are \textit{Weyl chambers} (or \textit{Weyl sectors}). These are open simplicial cones, and $W_0$ acts simply transitively on the set of Weyl chambers. The \textit{fundamental Weyl chamber} is
$
C_0=\{\la\in\fh_{\RR}\mid\langle\la,\alpha_i\rangle>0\textrm{ for }i=1,\ldots,n\},
$
and $P^+=P\cap\overline{C_0}$, where $\overline{C_0}$ is the closure of $C_0$ in $\fh_\RR$.

The roots $\alpha\in R$ can be regarded as elements of $\fh_{\RR}^*$ by setting $\alpha(\la)=\langle\la,\alpha\rangle$ for~$\la\in\fh_{\RR}$. Let $\delta:\fh_{\RR}\to\RR$ be the (non-linear) constant function with $\delta(\la)=1$ for all $\la\in\fh_{\RR}$. The \textit{affine root system} is $R_a=R+\ZZ\delta$. The \textit{affine hyperplane} for the affine root $\alpha+j\delta$ is
$$
H_{\alpha+j\delta}=\{\la\in\fh_{\RR}\mid\langle\la,\alpha+j\delta\rangle=0\}=\{\la\in\fh_{\RR}\mid\langle\la,\alpha\rangle=-j\}=H_{-\alpha-j\delta}.
$$
The \textit{affine Weyl group} is the subgroup $W$ of $\mathrm{Aff}(\fh_{\RR})$ generated by the reflections $s_{\alpha+k\delta}$ with $\alpha+k\delta\in R_a$, where $s_{\alpha+k\delta}:\fh_{\RR}\to\fh_{\RR}$ is given by
$
s_{\alpha+k\delta}(\la)=\la-(\langle\la,\alpha\rangle+k)\alpha^{\vee}
$ for $\la\in\fh_{\RR}$.
Let $\alpha_0=-\theta+\delta$ (with $\theta$ the highest root of $R$). The affine Weyl group is a Coxeter group with distinguished generators $s_0,s_1,\ldots,s_n$, where $s_0=s_{\alpha_0}$. For $\mu\in\fh_{\RR}$, let $t_{\mu}:\fh_{\RR}\to\fh_{\RR}$ be the translation $t_{\mu}(\la)=\la+\mu$ for all $\la\in\fh_{\RR}$. Then $s_{\alpha+k\delta}=t_{-k\alpha^{\vee}}s_{\alpha}$ and $W$ is the semidirect product $W=Q\rtimes W_0$.

The open connected components of $\fh_{\RR}\backslash\bigcup_{\beta\in R_a}H_{\beta}$ are \textit{alcoves}. The \textit{fundamental alcove} is
$$
c_0=\{\la\in\fh_{\RR}\mid\langle\la,\alpha_i\rangle>0\textrm{ for all $i=0,\ldots,n$}\}\subset C_0.
$$
The affine Weyl group acts simply transitively on the set of alcoves, and therefore $W$ is in bijection with the set of alcoves. Identify $1$ with $c_0$. 

The \textit{extended affine Weyl group} $\tilde{W}=P\rtimes W_0$ acts transitively (but in general not simply transitively) on the set of alcoves. In general $\tilde{W}$ is not a Coxeter group, but it is ``nearly'' a Coxeter group: There is a length function $\ell:\tilde{W}\to\ZZ_{\geq0}$ defined by 
$$
\ell(w)=|\{H_{\alpha+j\delta}\mid \textrm{$H_{\alpha+j\delta}$ separates $c_0$ from $wc_0$}\}|,
$$
and for $w\in W\subseteq \tilde{W}$ this agrees with the Coxeter length function. If $\Omega=\{w\in\tilde{W}\mid\ell(w)=0\}$ then $\tilde{W}=W\rtimes\Omega$, and $\Omega$ is isomorphic to the finite abelian group~$P/Q$. Therefore $\tilde{W}$ acts simply transitively on the set of alcoves in $\fh_{\RR}\times\Omega$, and so $\tilde{W}$ can be thought of, geometrically, as $|\Omega|$ copies of $W$.

\begin{Example}\label{ex:picture}  Let $e_1,e_2,e_3$ be the standard basis of $\RR^3$,
let $\fh_{\RR}=\{x\in\RR^3\mid x_1+x_2+x_3=0\}$, and let $R=\{\alpha_1,\alpha_2,\theta\}$, where $\alpha_1=e_1-e_2$, $\alpha_2=e_2-e_3$, and $\theta=\alpha_1+\alpha_2=e_1-e_3$. There are $6$ Weyl chambers (the $6$ sectors based at $0$) in bijection with~$W_0\cong S_3$ (the symmetric group on $3$ letters). The alcoves of $W$ are the triangles, which are in bijection with~$W$ (with $c_0\leftrightarrow 1$). The coroots $\alpha_1^{\vee}$, $\alpha_2^{\vee}$ and $\theta^{\vee}$ are shown, and the coroot lattice $Q=\ZZ\alpha_1^{\vee}+\ZZ\alpha_2^{\vee}$ is the set of centres of the solid hexagons. This ``hexagonification'' makes the semidirect product structure $W=Q\rtimes W_0$ clear. The fundamental coweights $\omega_1$ and $\omega_2$ are shown. In this case the vertices of $c_0$ are $\{0,\omega_1,\omega_2\}$ (see Appendix~\ref{app:buildings} for the general case), and the coweight lattice $P$ is the set of all vertices. The dominant coweights are the elements of $P$ in the ``top $\frac{1}{6}$th'' of~$\fh_{\RR}$. We have $\Omega\cong P/Q\cong\ZZ/3\ZZ$, and so $\tilde{W}$ can be thought of as $3$ copies (sheets) of the picture below. The ``orientation'' on the hyperplanes is explained at the beginning of Section~\ref{subsect:alcovewalk}.
$$
\beginpicture
\setcoordinatesystem units <1.1cm,1.1cm>         
\setplotarea x from -4 to 5, y from -5 to 5.2  
    \put{\small{$1$}} at 0 0.5
    \put{\small{$s_0$}} at 0 1.1
    \put{\small{$s_1$}} at 0.5 0.3
    \put{\small{$s_2$}} at -0.5 0.3
    \put{\small{$s_0s_1$}} at 0.5 1.55
    \put{\small{$s_0s_2$}} at -0.5 1.55
    \put{\small{$s_1s_2$}} at 0.5 -0.2
    \put{\small{$s_2s_1$}} at -0.5 -0.2
    \put{\small{$w_0$}} at 0 -0.65
    \put{\small{$w_0s_0$}} at 0 -1.1
    \plot 2.5 -4.33 3.7 -2.2516 /
    \plot 1.5 -4.33 3.7 -0.5196 /
    \plot 0.5 -4.33 3.7 1.2124 /
    \plot -0.5 -4.33 3.7 2.9444 /
    \plot -1.5 -4.33 3.2 3.81 /
    \plot -2.5 -4.33 2.2 3.81 /
    \plot -3.5 -4.33 1.2 3.81 /
    \plot -3.7 -2.9444 0.2 3.81 /
    \plot -3.7 -1.2124 -0.8 3.81 /
    \plot -3.7 0.5196 -1.8 3.81 /
    \plot -3.7 2.2516 -2.8 3.81 /
    \plot 2.5 4.33 3.7 2.2516 /
    \plot 1.5 4.33 3.7 0.5196 /
    \plot 0.5 4.33 3.7 -1.2124 /
    \plot -0.5 4.33 3.7 -2.9444 /
    \plot -1.5 4.33 3.2 -3.81 /
    \plot -2.5 4.33 2.2 -3.81 /
    \plot -3.5 4.33 1.2 -3.81 /
    \plot -3.7 2.9444 0.2 -3.81 /
    \plot -3.7 1.2124 -0.8 -3.81 /
    \plot -3.7 -0.5196 -1.8 -3.81 /
    \plot -3.7 -2.2516 -2.8 -3.81 /
    \plot -3.7 -3.464 4.4 -3.464 /
    \plot -3.7 -2.598 4.4 -2.598 /
    \plot -3.7 -1.732 4.4 -1.732 /
    \plot -3.7 -0.866 4.4 -0.866 /
    \plot -3.7 0 4.4 0 /
    \plot -3.7 3.464 4.4 3.464 /
    \plot -3.7 2.598 4.4 2.598 /
    \plot -3.7 1.732 4.4 1.732 /
    \plot -3.7 0.866 4.4 0.866 /
    \put{\small{$H_{\alpha_1-\delta}$}} at -3.5 -4.7
    \put{\small{$H_{\alpha_1}$}} at -2.5 -4.7
    \put{\small{$H_{\alpha_1+\delta}$}} at -1.5 -4.7
    \put{\small{$H_{\alpha_1+3\delta}$}} at 0.5 -4.7
    \put{\small{$H_{\alpha_1+5\delta}$}} at 2.5 -4.7
    \put{\small{$H_{\alpha_2+\delta}$}} at -3.5 4.7
    \put{\small{$H_{\alpha_2}$}} at -2.5 4.7
    \put{\small{$H_{\alpha_2-\delta}$}} at -1.5 4.7
    \put{\small{$H_{\alpha_2-3\delta}$}} at 0.5 4.7
    \put{\small{$H_{\alpha_2-5\delta}$}} at 2.5 4.7
    \put{\small{$H_{\theta-4\delta}$}}[l] at 4.5 3.464
    \put{\small{$H_{\theta-3\delta}$}}[l] at 4.5 2.598
    \put{\small{$H_{\theta-2\delta}$}}[l] at 4.5 1.732
    \put{\small{$H_{\alpha_0}=H_{\theta-\delta}$}}[l] at 4.5 0.866
    \put{\small{$H_{\theta}$}}[l] at 4.5 0
    \put{\small{$H_{\theta+\delta}$}}[l] at 4.5 -0.866
    \put{\small{$H_{\theta+2\delta}$}}[l] at 4.5 -1.732
    \put{\small{$H_{\theta+3\delta}$}}[l] at 4.5 -2.598
    \put{\small{$H_{\theta+4\delta}$}}[l] at 4.5 -3.464
    \put{$+$} at 4.25 0.2
    \put{$+$} at 4.25 1.066
    \put{$+$} at 4.25 1.932
    \put{$+$} at 4.25 2.798
    \put{$+$} at 4.25 3.664
    \put{$-$} at 4.25 -1.066
    \put{$-$} at 4.25 -1.932
    \put{$-$} at 4.25 -2.798
    \put{$-$} at 4.25 -3.664
    \put{$+$} at 4.25 -3.264
    \put{$+$} at 4.25 -2.398
    \put{$+$} at 4.25 -1.532
    \put{$+$} at 4.25 -0.666
    \put{$-$} at 4.25 3.264
    \put{$-$} at 4.25 2.398
    \put{$-$} at 4.25 1.532
    \put{$-$} at 4.25 0.666
    \put{$-$} at 4.25 -0.2
    \put{$+$} at -3.7 -4.2
    \put{$-$} at -3.2 -4.2
    \put{$+$} at -2.7 -4.2
    \put{$-$} at -2.2 -4.2
    \put{$+$} at -1.7 -4.2
    \put{$-$} at -1.2 -4.2
    \put{$+$} at -0.7 -4.2
    \put{$-$} at -0.2 -4.2
    \put{$+$} at 0.3 -4.2
    \put{$-$} at 0.8 -4.2
    \put{$+$} at 1.3 -4.2
    \put{$-$} at 1.8 -4.2
    \put{$+$} at 2.3 -4.2
    \put{$-$} at 2.8 -4.2
    \put{$-$} at -3.7 4.2
    \put{$+$} at -3.2 4.2
    \put{$-$} at -2.7 4.2
    \put{$+$} at -2.2 4.2
    \put{$-$} at -1.7 4.2
    \put{$+$} at -1.2 4.2
    \put{$-$} at -0.7 4.2
    \put{$+$} at -0.2 4.2
    \put{$-$} at 0.3 4.2
    \put{$+$} at 0.8 4.2
    \put{$-$} at 1.3 4.2
    \put{$+$} at 1.8 4.2
    \put{$-$} at 2.3 4.2
    \put{$+$} at 2.8 4.2
    \put{\small{$\omega_2$}} at 0.85 1.02
    \put{\small{$\omega_1$}} at -0.85 1.02
    \put{\small{$\alpha_1^{\vee}$}} at -1.9 1.03
    \put{$\bullet$} at -1.5 0.866
    \put{\small{$\alpha_2^{\vee}$}} at 1.9 1.03
    \put{$\bullet$} at 1.5 0.866
    \put{\small{$\theta^{\vee}$}} at 0.05 2.15
    \put{$\bullet$} at 0 1.732
    \put{$\bullet$} at 0 0 
    \setplotsymbol({\tiny{$\bullet$}})
    \plot 1 -3.464 0.5 -2.598 1 -1.732 0.5 -0.866 1 0 0.5 0.866 1 1.732 0.5 2.598 1 3.464 /
    \plot 2 -3.464 2.5 -2.598 2 -1.732 2.5 -0.866 2 0 2.5 0.866 2 1.732 2.5 2.598 2 3.464 /
    \plot -1 -3.464 -0.5 -2.598 -1 -1.732 -0.5 -0.866 -1 0 -0.5 0.866 -1 1.732 -0.5 2.598 -1 3.464 /
    \plot -2 -3.464 -2.5 -2.598 -2 -1.732 -2.5 -0.866 -2 0 -2.5 0.866 -2 1.732 -2.5 2.598 -2 3.464 /
    \plot -0.5 0.866 0.5 0.866 /
    \plot -0.5 2.598 0.5 2.598 /
    \plot -0.5 -0.866 0.5 -0.866 /
    \plot -0.5 -2.598 0.5 -2.598 /
    \plot 1 -3.464 2 -3.464 /
    \plot 1 -1.732 2 -1.732 /
    \plot 1 0 2 0 /
    \plot 1 1.732 2 1.732 /
    \plot 1 3.464 2 3.464 /
    \plot -1 -3.464 -2 -3.464 /
    \plot -1 -1.732 -2 -1.732 /
    \plot -1 0 -2 0 /
    \plot -1 1.732 -2 1.732 /
    \plot -1 3.464 -2 3.464 /
    \plot 2.5 -2.598 3.5 -2.598 /
    \plot 2.5 -0.866 3.5 -0.866 /
    \plot 2.5 2.598 3.5 2.598 /
    \plot 2.5 0.866 3.5 0.866 /
    \plot -2.5 -2.598 -3.5 -2.598 /
    \plot -2.5 -0.866 -3.5 -0.866 /
    \plot -2.5 2.598 -3.5 2.598 /
    \plot -2.5 0.866 -3.5 0.866 /
\endpicture
$$
\end{Example}

\subsection{Alcove walks}\label{subsect:alcovewalk}

Each hyperplane $H$ determines two closed halfspaces of $\fh_{\RR}$. Define an \textit{orientation} on the linear hyperplanes $H_{\alpha}$, $\alpha\in R$, by declaring the ``positive'' side of $H_{\alpha}$ to be the half space containing the fundamental Weyl chamber~$C_0$. Extend this orientation ``periodically" by giving $H_{\alpha+k\delta}$ the same orientation as~$H_{\alpha}$. An equivalent definition is that the positive side of $H_{\alpha+k\delta}$ is the half space which contains a ``subsector'' of $C_0$. Explicitly, if $\alpha\in R^+$ and $k\in\ZZ$ then the negative and positive sides of $H_{\alpha+k\delta}$ are
\begin{align*}
H_{\alpha+k\delta}^-&=\{x\in\fh_{\RR}\mid\langle x,\alpha+k\delta\rangle\leq0\}=\{x\in\fh_{\RR}\mid \langle x,\alpha\rangle\leq -k\},\quad\textrm{and}\\
H_{\alpha+k\delta}^+&=\{x\in\fh_{\RR}\mid\langle x,\alpha+k\delta\rangle\geq0\}=\{x\in\fh_{\RR}\mid\langle x,\alpha\rangle\geq -k\},
\end{align*}
respectively. See the picture in Example~\ref{ex:picture}. For $v\in\tilde{W}$ the \textit{signed length} of $v$ is
\begin{align}\label{eq:sgnlength}
\epsilon(v)=\#\left\{\textrm{$H$ with }\beginpicture
\setcoordinatesystem units <0.8cm,0.8cm>         
\setplotarea x from -0.8 to 0.7, y from -0.5 to 0.5  
\put{$\scriptstyle{H}$}[b] at 0 0.6
\put{$\scriptstyle{-}$}[b] at -0.4 0.25
\put{$\scriptstyle{+}$}[b] at 0.4 0.25
\put{$\scriptstyle{1}$}[br] at -0.45 -0.1
\put{$\scriptstyle{v}$}[bl] at 0.45 -0.1
\plot  0 -0.4  0 0.5 /
\endpicture\right\}-\#\left\{\textrm{$H$ with }\beginpicture
\setcoordinatesystem units <0.8cm,0.8cm>         
\setplotarea x from -0.8 to 0.7, y from -0.5 to 0.5  
\put{$\scriptstyle{H}$}[b] at 0 0.6
\put{$\scriptstyle{-}$}[b] at -0.4 0.25
\put{$\scriptstyle{+}$}[b] at 0.4 0.25
\put{$\scriptstyle{v}$}[br] at -0.45 -0.1
\put{$\scriptstyle{1}$}[bl] at 0.45 -0.1
\plot  0 -0.4  0 0.5 /
\endpicture\right\},
\qquad\hbox{so that}
\end{align}
\begin{align}\label{eq:epsilformula}
\epsilon(t_{\mu})=\langle\mu,2\rho\rangle,\ \ \hbox{for $\mu\in P$},
\qquad\hbox{and}\qquad
\epsilon(t_{\la})=\ell(t_{\la}),\ \ \hbox{for $\lambda\in P^+$,}
\end{align}
where $\rho = \frac{1}{2} \sum_{\alpha\in R^+} \alpha$.

Let $\vec w=s_{i_1}\cdots s_{i_{\ell}}\gamma$ be a reduced expression for $w\in \tilde{W}$, with $\gamma\in\Omega$. A \textit{positively folded alcove walk of type~$\vec w$} is a sequence of steps from alcove to alcove in $\tilde{W}$, starting at $1\in \tilde{W}$, and made up of the symbols
\begin{align}\label{eq:symbols}
\beginpicture
\setcoordinatesystem units <0.8cm,0.8cm>         
\setplotarea x from -0.8 to 0.7, y from -0.5 to 0.5  
\put{$\scriptstyle{-}$}[b] at -0.4 0.25
\put{$\scriptstyle{+}$}[b] at 0.4 0.25
\put{$\scriptstyle{x}$}[br] at -0.6 0.1
\put{$\scriptstyle{xs}$}[bl] at 0.6 0.1
\plot  0 -0.4  0 0.5 /
\arrow <5pt> [.2,.67] from -0.5 0 to 0.5 0   %
\put{(\textit{positive crossing})} at 0 -1
\endpicture
\qquad\qquad
\beginpicture
\setcoordinatesystem units <0.8cm,0.8cm>         
\setplotarea x from -0.8 to 0.7, y from -0.5 to 0.5  
\put{$\scriptstyle{-}$}[b] at -0.4 0.35
\put{$\scriptstyle{+}$}[b] at 0.4 0.35
\put{$\scriptstyle{x}$}[bl] at 0.6 0.1
\put{$\scriptstyle{xs}$}[br] at -0.6 0.1
\plot  0 -0.4  0 0.6 /
\plot 0.5 0  0.05 0 /
\arrow <5pt> [.2,.67] from 0.05 0.1 to 0.5 0.1   %
\plot 0.05 0 0.05 0.1 /
\put{(\textit{positive fold})} at 0 -1
\endpicture
\qquad\qquad
\beginpicture
\setcoordinatesystem units <0.8cm,0.8cm>         
\setplotarea x from -0.8 to 0.7, y from -0.5 to 0.5  
\put{$\scriptstyle{-}$}[b] at -0.4 0.25
\put{$\scriptstyle{+}$}[b] at 0.4 0.25
\put{$\scriptstyle{x}$}[bl] at 0.6 0.1
\put{$\scriptstyle{xs}$}[br] at -0.6 0.1
\plot  0 -0.4  0 0.5 /
\arrow <5pt> [.2,.67] from 0.5 0 to -0.5 0   %
\put{(\textit{negative crossing})} at 0 -1 
\endpicture
\end{align}
where the $k$th step has $s=s_{i_k}$ for $k=1,\ldots,\ell$. To take into account the sheets of $\tilde{W}$, one concludes the alcove walk by ``jumping'' to the $\gamma$ sheet of $\fh_{\RR}\times\Omega$. Our pictures of alcove walks will always be drawn without this jump by projecting $\fh_{\RR}\times\{\gamma\}\rightarrow\fh_{\RR}\times \{1\}$.

Let $p$ be a positively folded alcove walk. Define statistics
\begin{align*}
\epsilon^+(p)&=\#\textrm{(positive crossings in $p$)},\\
\epsilon^-(p)&=\#\textrm{(negative crossings in $p$)},\\
f(p)&=\#\textrm{(folds in $p$)}.
\end{align*}
The \textit{length} $\ell(p)$ of $p$ and the \textit{signed length} $\epsilon(p)$ of $p$ are
$$
\ell(p)=\epsilon^+(p)+\epsilon^-(p)+f(p)\qquad\textrm{and}\qquad 
\epsilon(p)=\epsilon^+(p)-\epsilon^-(p)= \epsilon(\mathrm{end}(p)),
$$
where $\mathrm{end}(p)\in\tilde{W}$ is the alcove where $p$ ends, and $\epsilon(\mathrm{end}(p))$ is as in~(\ref{eq:sgnlength}). The equality $\epsilon(p)=\epsilon(\mathrm{end}(p))$ follows because if $p$ makes a positive and a negative crossing on the same hyperplane then the corresponding contributions to $\epsilon(p)=\epsilon^+(p)-\epsilon^-(p)$ cancel. The \textit{dimension} of $p$ is
\begin{align}\label{eq:dimension}
\dim(p)=\epsilon^+(p)+f(p)=\ell(p)-\epsilon^-(p)=\hbox{$\frac12$}(\ell(p)+\epsilon(p)+f(p)).
\end{align}

\begin{Example}\label{ex:1} The positively folded alcove walk 
$$\beginpicture
\setcoordinatesystem units <0.8cm,0.8cm>         
\setplotarea x from -4 to 4, y from -3.1 to 3.1  
    \plot 4.3 -2.944 5.2 -1.3856 /
    \plot 3.3 -2.944 5.2 0.346 /
    \plot 2.3 -2.944 5.2 2.078 /
    \plot 1.3 -2.944 4.7 2.944 /
    \plot 0.3 -2.944 3.7 2.944 /
    \plot -0.7 -2.944 2.7 2.944 /
    \plot -1.7 -2.944 1.7 2.944 /
    \plot -2.7 -2.944 0.7 2.944 /
    \plot -3.7 -2.944 -0.3 2.944 /
    \plot -3.7 -1.212 -1.3 2.944 /
    \plot -3.7 0.520 -2.3 2.944 /
    \plot -3.7 2.2516 -3.3 2.944 /
    \plot 4.3 2.944 5.2 1.3856 /
    \plot 3.3 2.944 5.2 -0.346 /
    \plot 2.3 2.944 5.2 -2.078 /
    \plot 1.3 2.944 4.7 -2.944 /
    \plot 0.3 2.944 3.7 -2.944 /
    \plot -0.7 2.944 2.7 -2.944 /
    \plot -1.7 2.944 1.7 -2.944 /
    \plot -2.7 2.944 0.7 -2.944 /
    \plot -3.7 2.944 -0.3 -2.944 /
    \plot -3.7 1.212 -1.3 -2.944 /
    \plot -3.7 -0.520 -2.3 -2.944 /
    \plot -3.7 -2.2516 -3.3 -2.944 /
    \plot -3.7 -2.598 5.2 -2.598 /
    \plot -3.7 -1.732 5.2 -1.732 /
    \plot -3.7 -0.866 5.2 -0.866 /
    \plot -3.7 0 5.2 0 /
    \plot -3.7 2.598 5.2 2.598 /
    \plot -3.7 1.732 5.2 1.732 /
    \plot -3.7 0.866 5.2 0.866 /
    \arrow <5pt> [.2,.67] from -2 0.577 to -1.5 0.289   %
    \arrow <5pt> [.2,.67] from -1.5 0.289 to -1.5 -0.289   %
    \arrow <5pt> [.2,.67] from -1.5 -0.289 to -1 -0.577   %
    \arrow <5pt> [.2,.67] from -1 -0.577 to -1 -1.082   %
    \put{\beginpicture
    \setcoordinatesystem units <0.75cm,0.75cm>         
    \plot 0 0 -0.28 -0.15 /
    \plot -0.28 -0.15 -0.2 -0.3 /
    \arrow <5pt> [.2,.67] from -0.2 -0.3 to 0.05 -0.13
    \endpicture} at -1 -1.082
    \arrow <5pt> [.2,.67] from -0.95 -1.155 to -0.55 -1.432   %
    \put{\beginpicture
    \setcoordinatesystem units <0.7cm,0.7cm>         
    \arrow <5pt> [.2,.67] from 0.08 -0.32 to 0.08 0 %
    \plot 0.07 -0.32 -0.08 -0.32 /
    \plot -0.08 -0.32 -0.08 0 /
    \endpicture} at -0.5 -1.417
    \arrow <5pt> [.2,.67] from -0.45 -1.432 to 0 -1.155   %
    \arrow <5pt> [.2,.67] from 0 -1.155 to 0.5 -1.443   %
    \arrow <5pt> [.2,.67] from 0.5 -1.443 to 1 -1.155   %
    \arrow <5pt> [.2,.67] from 1 -1.155 to 1.5 -1.443   %
    \arrow <5pt> [.2,.67] from 1.5 -1.443 to 2 -1.155   %
    \arrow <5pt> [.2,.67] from 2 -1.155 to 2.5 -1.443   %
    \arrow <5pt> [.2,.67] from 2.5 -1.443 to 2.97 -1.23   %
    \put{\beginpicture
    \setcoordinatesystem units <0.7cm,0.7cm>         
    \arrow <5pt> [.2,.67] from 0.28 -0.15 to 0 0
    \plot 0.28 -0.15 0.2 -0.3 /
    \plot 0.2 -0.3 -0.05 -0.13 /
    \endpicture} at 3 -1.155
    \arrow <5pt> [.2,.67] from 3 -1.13 to 3 -0.577   %
    \arrow <5pt> [.2,.67] from 3 -0.577 to 2.5 -0.289   %
    \arrow <5pt> [.2,.67] from 2.5 -0.289 to 2.5 0.289   %
    \arrow <5pt> [.2,.67] from 2.5 0.289 to 2 0.577   %
    \arrow <5pt> [.2,.67] from 2 0.577 to 2 1.155   %
\endpicture$$
has $\epsilon^+(p)=9$, $\epsilon^-(p)=8$, $f(p)=3$, $\ell(p)=20$, $\epsilon(p)=1$ and $\dim(p)=12$.
\end{Example}

As in the statement of Theorem~$\mathbf{C}$, for $\la\in P^+$ and $\mu\in P$ let
$$
\cP(\vec\la)_{\mu}=\left\{\begin{matrix}\textrm{positively folded alcove walks of type}\\
\textrm{$\vec u\cdot\vec m_{\la}$ with end alcove in $t_{\mu}W_0$}\end{matrix} \,\,\bigg|\,\, u\in W_0^{\la}\right\},
$$
where $\vec m_{\la}$ is a minimal length walk to the minimal length element $m_{\la}\in W_0t_{\la}W_0$, and $\vec u$ is a minimal length walk to $u\in W_0^{\la}$, with $W_0^{\la}$ a set of minimal length representatives for cosets in $W_0/W_{0\la}$. Note that if $\la\in P^+$ is \textit{regular} (that is, $\la$ does not lie on any hyperplane $H_{\alpha}$, $\alpha\in R$) then $W_0^{\la}=W_0$.

Let $p\in\cP(\vec\la)_{\mu}$. The \textit{final direction} of $p$ is the element $\varphi(p)\in W_0$ defined by $\mathrm{end}(p)=t_{\mu}\varphi(p)$. Since $t_{\mu}$ is in the $1$-position of $t_{\mu}W_0$ we have
\begin{align}\label{eq:signedlength}
\epsilon(p)=\epsilon(t_{\mu}\varphi(p))=\epsilon(t_{\mu})-\ell(\varphi(p))=\langle\mu,2\rho\rangle-\ell(\varphi(p)).
\end{align}
The \textit{canonical antidominant $\vec\la$-path} is the unique element 
$p_{w_0\la}\in\cP(\vec\la)_{w_0\la}$, where $w_0$ is the longest element of $W_0$. 
This walk consists entirely of negative crossings, has no folds, and has 
$\mathrm{end}(p_{w_0\la})=t_{w_0\la}$ and $\dim(p_{w_0\la})=0$.

\begin{Example}\label{ex:paths} Consider type $A_2$ and let $\la=\omega_1+\omega_2$. Then $\vec m_{\la}=s_0$, and $W_0^{\la}=W_0$. The $25$ positively folded alcove walks of types $\vec u\cdot \vec m_{\la}$ with $u\in W_0^{\la}$ are shown below. Within each choice for $u$ the paths have been organised according to end vertex $\mu\in\{0\}\cup W_0\la$.
$$
\vec u=1:\beginpicture
    \setcoordinatesystem units <0.7cm,0.7cm>         
    \setplotarea x from -1 to 1, y from -0.4 to 0.8  
        \color{Gray}
        \put{\ua} at 0 0.2
        \color{black}
\endpicture
\qquad
\vec u=s_1:\beginpicture
    \setcoordinatesystem units <1.2cm,1.2cm>
    \setplotarea x from -1 to 1, y from -0.4 to 0.8 
	\color{Gray}
	\put{\beginpicture
    \setcoordinatesystem units <0.7cm,0.7cm>         
    \setplotarea x from -0.7 to 0.7, y from -0.7 to 0.7  
        \put{\ra} at 0 0
        \put{\upra} at 0.5 0
    \endpicture} at 0.3 -0.2
    \put{\beginpicture
    \setcoordinatesystem units <0.7cm,0.7cm>         
    \setplotarea x from -0.7 to 0.7, y from -1 to 1  
        \put{\rfold} at 0 0
        \put{\ua} at 0 0.35
    \endpicture} at -0.5 0.5
    \color{black}
\endpicture
\qquad
\vec u=s_2:\beginpicture
    \setcoordinatesystem units <1.2cm,1.2cm>
    \setplotarea x from -1 to 1, y from -0.4 to 0.8 
	\color{Gray}
	\put{\beginpicture
    \setcoordinatesystem units <0.7cm,0.7cm>         
    \setplotarea x from -0.7 to 0.7, y from -0.7 to 0.7  
        \put{\lfoldalt} at 0 0
        \put{\ua} at 0 0.35
    \endpicture} at 0.5 0.5
    \put{\beginpicture
    \setcoordinatesystem units <0.7cm,0.7cm>         
    \setplotarea x from -0.7 to 0.7, y from -0.7 to 0.7  
        \put{\nla} at 0 0
        \put{\upla} at -0.55 0
    \endpicture} at -0.3 -0.2
    \color{black} 
\endpicture
$$
$$
\vec u= s_1s_2:\beginpicture
    \setcoordinatesystem units <1.2cm,1.2cm>
    \setplotarea x from -2 to 2, y from -1 to 1.5 
	\color{Gray}
	\put{\beginpicture
    \setcoordinatesystem units <0.7cm,0.7cm>         
    \setplotarea x from -0.7 to 0.7, y from -0.7 to 0.7  
        \put{\rfold} at 0 0
        \put{\lfoldalt} at -0.1 0.16
        \put{\ua} at -0.1 0.5
    \endpicture} at 0 1.3
    \put{\beginpicture
    \setcoordinatesystem units <0.7cm,0.7cm>         
    \setplotarea x from -0.7 to 0.7, y from -0.7 to 0.7  
        \put{\ra} at 0 0
        \put{\dfolddash} at 0.3 -0.15
        \put{\upra} at 0.7 0
    \endpicture} at 1.2 0.9
    \put{\beginpicture
    \setcoordinatesystem units <0.7cm,0.7cm>         
    \setplotarea x from -0.7 to 0.7, y from -0.7 to 0.7  
        \put{\rfold} at 0 0
        \put{\nla} at -0.3 -0.12
        \put{\upla} at -0.85 -0.1
    \endpicture} at -1.2 0.9
    \put{\beginpicture
    \setcoordinatesystem units <0.7cm,0.7cm>         
    \setplotarea x from -0.7 to 0.7, y from -0.7 to 0.7  
        \put{\ra} at 0 0
        \put{\da} at 0.25 -0.45
        \put{\ra} at 0.53 -0.9
    \endpicture} at 1.2 -0.5
    \put{\beginpicture
    \setcoordinatesystem units <0.7cm,0.7cm>         
    \setplotarea x from -0.7 to 0.7, y from -0.7 to 0.7  
        \put{\ra} at 0 0
        \put{\da} at 0.25 -0.45
        \put{\rfoldalt} at 0.25 -0.75
    \endpicture} at 0 0.1
     \color{black}
    \endpicture
    \qquad
    \vec u= s_2s_1:
	\beginpicture
    \setcoordinatesystem units <1.2cm,1.2cm>
    \setplotarea x from -2 to 2, y from -1 to 1.5 
	\color{Gray}
	\put{\beginpicture
    \setcoordinatesystem units <0.7cm,0.7cm>         
    \setplotarea x from -0.7 to 0.7, y from -0.7 to 0.7  
        \put{\lfoldalt} at 0 0
        \put{\rfold} at 0.1 0.17
        \put{\ua} at 0.1 0.51
    \endpicture} at 0 1.35
    \put{\beginpicture
    \setcoordinatesystem units <0.7cm,0.7cm>         
    \setplotarea x from -0.7 to 0.7, y from -0.7 to 0.7  
        \put{\lfoldalt} at 0 0
        \put{\ra} at 0.3 -0.12
        \put{\upra} at 0.85 -0.1
    \endpicture} at 1.2 0.9
    \put{\beginpicture
    \setcoordinatesystem units <0.7cm,0.7cm>         
    \setplotarea x from -0.7 to 0.7, y from -0.7 to 0.7  
        \put{\nla} at 0 0
        \put{\dfold} at -0.35 -0.15
        \put{\upla} at -0.7 0
    \endpicture} at -1.2 0.9
    \put{\beginpicture
    \setcoordinatesystem units <0.7cm,0.7cm>         
    \setplotarea x from -0.7 to 0.7, y from -0.7 to 0.7  
        \put{\nla} at 0 0
        \put{\da} at -0.25 -0.45
        \put{\nla} at -0.53 -0.9
    \endpicture} at -1.2 -0.5
    \put{\beginpicture
    \setcoordinatesystem units <0.7cm,0.7cm>         
    \setplotarea x from -0.7 to 0.7, y from -0.7 to 0.7  
        \put{\nla} at 0 0
        \put{\da} at -0.25 -0.45
        \put{\lfold} at -0.25 -0.75
    \endpicture} at 0 0.2
    \color{black}
\endpicture
$$
$$
\vec u=s_1s_2s_1:\beginpicture
    \setcoordinatesystem units <1.2cm,1.2cm>         
    \setplotarea x from -2 to 2, y from -1.75 to 1.75  
    \put{\beginpicture
    \setcoordinatesystem units <0.7cm,0.7cm>         
    \setplotarea x from -1 to 1, y from -1 to 1  
        \put{\ra} at 0 0
        \put{\da} at 0.25 -0.45
        \put{\nla} at 0 -0.9
        \put{\da} at -0.25 -1.35
    \endpicture} at 0 -1.65
    \put{\beginpicture
    \setcoordinatesystem units <0.7cm,0.7cm>         
    \setplotarea x from -1 to 1, y from -1 to 1  
        \put{\rfold} at 0 0
        \put{\nla} at -0.28 -0.1
        \put{\da} at -0.55 -0.55
        \put{\nla} at -0.8 -1
    \endpicture} at -2 -1
    \put{\beginpicture
    \setcoordinatesystem units <0.7cm,0.7cm>         
    \setplotarea x from -1 to 1, y from -1 to 1  
        \put{\ra} at 0 0
        \put{\da} at 0.25 -0.45
        \put{\lfold} at 0.23 -0.78
        \put{\ra} at 0.53 -1.04
    \endpicture} at 2 -1.05
    \put{\beginpicture
    \setcoordinatesystem units <0.7cm,0.7cm>         
    \setplotarea x from -1 to 1, y from -1 to 1  
        \put{\rfold} at 0 0
        \put{\nla} at -0.28 -0.1
        \put{\da} at -0.55 -0.55
        \put{\lfold} at -0.6 -0.85
    \endpicture} at -1 0
    \color{Gray}
    \put{\beginpicture
    \setcoordinatesystem units <0.7cm,0.7cm>         
    \setplotarea x from -1 to 1, y from -1 to 1  
        \put{\ra} at 0 0
        \put{\da} at 0.25 -0.45
        \put{\nla} at 0 -0.9
        \put{\dfold} at -0.34 -1.05
    \endpicture} at 0 -0.05
    \color{black}
    \put{\beginpicture
    \setcoordinatesystem units <0.7cm,0.7cm>         
    \setplotarea x from -1 to 1, y from -1 to 1  
        \put{\ra} at 0 0
        \put{\da} at 0.25 -0.45
        \put{\lfold} at 0.23 -0.78
        \put{\rfold} at 0.4 -0.8
    \endpicture} at 1 0
    \put{\beginpicture
    \setcoordinatesystem units <0.7cm,0.7cm>         
    \setplotarea x from -1 to 1, y from -1 to 1  
        \put{\rfold} at 0 0
        \put{\nla} at -0.28 -0.1
        \put{\dfold} at -0.6 -0.2
        \put{\upla} at -0.95 -0.05
    \endpicture} at -2 1
    \put{\beginpicture
    \setcoordinatesystem units <0.7cm,0.7cm>         
    \setplotarea x from -1 to 1, y from -1 to 1  
        \put{\rfold} at 0 0
        \put{\lfoldalt} at -0.09 0.17
        \put{\ra} at 0.21 0.09
        \put{\upra} at 0.72 0.1
    \endpicture} at 2 1
    \put{\beginpicture
    \setcoordinatesystem units <0.7cm,0.7cm>         
    \setplotarea x from -1 to 1, y from -1 to 1  
        \put{\rfold} at 0 0
        \put{\lfoldalt} at -0.09 0.17
        \put{\rfold} at 0.01 0.32
        \put{\ua} at 0 0.67
    \endpicture} at -0.55 1.75
    \color{Gray}
    \put{\beginpicture
    \setcoordinatesystem units <0.7cm,0.7cm>         
    \setplotarea x from -0.7 to 0.7, y from -0.7 to 0.7  
        \put{\ra} at 0 0.05
        \put{\beginpicture   
        		\setplotarea x from -0.7 to 0.7, y from -0.7 to 0.7
    		\arrow <5pt> [.2,.67] from 0.08 -0.23 to 0.08 0.23 %
    		\plot 0.08 -0.23 -0.08 -0.23 /
    		\plot -0.08 -0.23 -0.08 0.05 /
	\endpicture} at 0.37 -0.17
        \put{\upla} at 0.2 0.2
        \put{\ua} at -0.05 0.6
    \endpicture} at 0.45 1.75
    \color{black}
\endpicture
$$
For each $\mu$, the paths in $\cP(\vec\la)_{\mu}$ with maximal dimension are shown in black. These are the \textit{LS galleries} of \cite[Definition~18]{litt}. The antidominant $\vec\la$-path is the bottom path in the picture.
\end{Example}

The following Lemma is proved in \cite[(4.5)]{schwer} and \cite[\S5 Proposition~4]{litt}. We provide a proof in our language here.

\begin{lemma}\label{lem:bound} Let $p\in\cP(\vec\la)_{\mu}$. Then 
$$
\dim(p)\leq\langle\la+\mu,\rho\rangle
\quad\textrm{with equality if and only if\quad
$\ell(\varphi(p))=f(p)$ and $\ell(p)=\ell(t_{\la})$}.
$$
\end{lemma}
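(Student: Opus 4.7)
The plan is to reduce the inequality $\dim(p)\leq\langle\la+\mu,\rho\rangle$ to a statement about affine Weyl group lengths and then split it into two independent bounds. Using the formula $\dim(p)=\frac12(\ell(p)+\epsilon(p)+f(p))$ from~(\ref{eq:dimension}), the identity $\epsilon(p)=\langle\mu,2\rho\rangle-\ell(\varphi(p))$ from~(\ref{eq:signedlength}), and $\ell(t_\la)=\langle\la,2\rho\rangle$ from~(\ref{eq:epsilformula}), direct substitution rewrites the desired inequality as
$$
\ell(p)+f(p)\leq\ell(t_\la)+\ell(\varphi(p)).
$$
I would prove this via two separate bounds, $\ell(p)\leq\ell(t_\la)$ and $f(p)\leq\ell(\varphi(p))$, so that equality in the lemma is forced precisely by simultaneous equality in each.

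For the length bound, the walk's length equals the length of its type word, so $\ell(p)=\ell(u)+\ell(m_\la)$. Since $u\in W_0^\la$, we have $\ell(u)\leq\ell(w_0^\la)$, where $w_0^\la$ is the longest element of $W_0^\la$; the standard identity $\ell(w_0^\la)+\ell(m_\la)=\ell(t_\la)$ for minimal-length double coset representatives of $W_0\backslash \tilde W/W_0$ then gives $\ell(p)\leq\ell(t_\la)$, with equality if and only if $\ell(u)=\ell(w_0^\la)$.

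For the fold bound $f(p)\leq\ell(\varphi(p))$, the main combinatorial step, I would track the direction sequence $v_0,v_1,\ldots,v_\ell$ in $W_0$, where $v_k\in W_0$ is defined by writing $a_k=t_{\mu_k}v_k$ for the alcove at step $k$. Folds preserve $v_k$, while a crossing multiplies $v$ on the right by $s_{i_k}$ (if $i_k\neq 0$) or by $s_\theta$ (if $i_k=0$, since $s_0=t_{\theta^\vee}s_\theta$). A positive fold at step $k$ with $i_k\neq 0$ requires $a_{k-1}$ to lie on the positive side of its type-$i_k$ wall, which translates to $v_{k-1}\alpha_{i_k}\in R^+$, equivalently $\ell(v_{k-1}s_{i_k})=\ell(v_{k-1})+1$; the analogous condition for $i_k=0$ is $-v_{k-1}\theta\in R^+$. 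I would assign to each fold at step $k$ the positive root $\beta_k\in R^+$ corresponding to the finite part of the fold hyperplane (namely $v_{k-1}\alpha_{i_k}$ for $i_k\neq 0$ and $-v_{k-1}\theta$ for $i_k=0$), and show that $k\mapsto\beta_k$ embeds the set of fold positions injectively into $\{\alpha\in R^+:\varphi(p)^{-1}\alpha\in R^-\}$, whose cardinality is $\ell(\varphi(p))$.

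The main obstacle is verifying the injectivity and the inversion property of $k\mapsto\beta_k$: one must show that the reflections applied by crossings occurring after step $k$ never remove $\beta_k$ from the inversion set of the final direction, and that distinct fold positions produce distinct roots. This requires careful bookkeeping of how each reflection acts on the accumulated roots $\beta_k$, with special care for $i_k=0$ crossings whose $s_\theta$-action on $v_k$ can affect many roots simultaneously. Once both bounds are established, their sum yields $\dim(p)\leq\langle\la+\mu,\rho\rangle$, and equality in the sum forces equality in each bound, recovering the stated characterization $\ell(p)=\ell(t_\la)$ and $f(p)=\ell(\varphi(p))$.
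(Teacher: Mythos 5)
Your reduction of the inequality to $\ell(p)+f(p)\leq \ell(t_{\la})+\ell(\varphi(p))$ is correct, as is the split into the two bounds $\ell(p)\leq\ell(t_{\la})$ and $f(p)\leq\ell(\varphi(p))$ and the observation that equality in the lemma is equivalent to simultaneous equality in both; the first bound follows as you say from $t_{\la}=m_{\la}w_0w_{0\la}$. The gap is in the second bound, which is the entire content of the lemma, and the mechanism you propose for it is not merely unverified but false. The map $k\mapsto\beta_k=v_{k-1}\alpha_{i_k}$ sending a fold to the finite direction of its fold hyperplane is not injective: in Example~\ref{ex:paths} (type $A_2$, $\la=\omega_1+\omega_2$, type word $s_1s_2s_1\cdot s_0$) the walk that folds on each of its first three steps and crosses on the fourth is positively folded with $f(p)=3$ and $\varphi(p)=s_\theta=w_0$, but its three folds lie on $H_{\alpha_1}$, $H_{\alpha_2}$ and $H_{\alpha_1}$ again, so $\beta_1=\beta_3=\alpha_1$. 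Thus there is no injection of the set of folds into the inversion set of $\varphi(p)$ obtained by recording fold directions, and the ``careful bookkeeping'' you defer cannot succeed in the form you describe.

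The bound $f(p)\leq\ell(\varphi(p))$ is nevertheless true, and the paper proves it by unfolding rather than by an injection: for $0\leq i\leq f(p)$ let $p_i$ agree with $p$ up to the $i$th fold and be unfolded thereafter, so that $p_0=\vec u\cdot\vec m_{\la}$ and $p_{f(p)}=p$. If the $i$th fold is on $H_{\alpha+k\delta}$ then $\varphi(p_i)=s_{\alpha}\varphi(p_{i-1})$, and positivity of the fold forces $\ell(\varphi(p_i))>\ell(\varphi(p_{i-1}))$. Telescoping gives $\ell(\varphi(p))\geq\ell(\varphi(p_0))+f(p)\geq f(p)$ even though the reflections may repeat (in the example above the chain is $1<s_1<s_2s_1<s_1s_2s_1$ with reflections $s_{\alpha_1},s_{\alpha_2},s_{\alpha_1}$). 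Since $\ell(\varphi(p_0))=\ell(t_{\la})-\ell(p)$, this single telescoping argument also subsumes your first bound and yields the stated equality criterion. To repair your proof, replace the proposed injection by this one-fold-at-a-time length count.
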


\begin{proof} Let $p\in\cP(\vec\la)_{\mu}$ be of type $\vec u\cdot \vec m_\lambda$. 
Following \cite[Remark~4.4]{ram2}, for each $0\le i\le f(p)$ let $p_i$
be the positively folded alcove walk that agrees with $p$ up to the $i$th
fold and is unfolded thereafter.  Then $p_{f(p)}=p$, $p_0 = \vec u\cdot \vec m_\lambda$,
$$\varphi(p_i) = s_\alpha\varphi(p_{i-1}),\quad
\ell(\varphi(p_i))>\ell(\varphi(p_{i-1})),
\quad\hbox{and}\quad
\ell(\varphi(p_i)) - \ell(\varphi(p_{i-1}))-1\in 2\ZZ_{\ge 0},$$
if the $i$th fold is on the hyperplane $H_{\alpha+k\delta}$.
By (\ref{eq:epsilformula}) and (\ref{eq:signedlength}), 
$$\ell(p) + \ell(\varphi(p_0)) = \ell(t_{\lambda}) = \langle \lambda, 2\rho\rangle\qquad\textrm{and}\qquad\epsilon(p)+\ell(\varphi(p))=\langle\mu,2\rho\rangle$$
and it follows,  using (\ref{eq:dimension}), that
\begin{align*}
2(\langle \lambda+\mu, \rho\rangle - \dim(p))
&=\ell(\varphi(p))-f(p)+\ell(\varphi(p_0)) \\
&=2 \ell(\varphi(p_0)) + (\ell(\varphi(p))-\ell(\varphi(p_0))-f(p))\\
&=2\ell(\varphi(p_0)) + \sum_{i=1}^{f(p)} (\ell(\varphi(p_i))-\ell(\varphi(p_{i-1})-1)  \geq 0,
\end{align*}
with $2\ell(\varphi(p_0))$ and $\ell(\varphi(p))-\ell(\varphi(p_0))-f(p)$ nonnegative even integers.
The second equality implies that 
$\dim(p)=\langle\la+\mu,\rho\rangle$ if and only if $\ell(\varphi(p))-\ell(\varphi(p_0))-f(p)=0$ and $\ell(\varphi(p_0))=0$. 
Therefore $\dim(p)=\langle\la+\mu,\rho\rangle$ if and only if 
$\ell(\varphi(p))=f(p)$ and $\ell(\varphi(p_0))=\ell(t_{\la})-\ell(p)=0$.
\end{proof}

\begin{remark}\label{rem:folds} If $p\in\cP(\vec\la)_{\mu}$ has type $\vec u\cdot\vec m_{\la}$ with $u\in W^{\la}_0$ then the condition $\ell(p)=\ell(t_{\la})$ implies that $um_{\la}=t_{w_0\la}$. Thus $\dim(p)=\langle\la+\mu,\rho\rangle$ if and only if $p_0=p_{w_0\la}$ (with $p_0$ the straightening of $p$, as in the proof of Lemma~\ref{lem:bound}) and $\ell(\varphi(p))=f(p)$. \end{remark}

\subsection{Root operators}

Let $1\leq i\leq n$. The \textit{raising root operator} $\tilde{e}_i$ (cf. \cite{litt}, \cite{ram2}) is an operator on positively folded alcove walks that either kills a walk (i.e. $\tilde e_i(p)=0$) or produces a new positively folded alcove walk. The definition of $\tilde{e}_i(p)$ is as follows.

\begin{defn}The \textit{$i$-critical hyperplane} of an alcove walk $p$ is the hyperplane $H_{\alpha_i+k\delta}$ with $k$ maximal subject to the condition that $p$ makes a negative crossing on $H_{\alpha_i+k\delta}$.  The \textit{$i$-critical crossing} of $p$ is the first negative crossing of $p$ occurring on the $i$-critical hyperplane. If $p$ has no $i$-critical crossing then $\tilde{e}_i(p)=0$. Otherwise, let 
$
H_{\gamma}=H_{\alpha_i+k\delta}$ be the $i$-critical hyperplane
and compute $\tilde{e}_i(p)$ according to the following cases. 
\begin{enumerate}
    \item[(a)] If $p$ has a fold on $H_{\gamma+\delta}$ after the $i$-critical crossing then
$$\beginpicture
    \setcoordinatesystem units <0.4cm,0.175cm>         
    \setplotarea x from -4 to 4, y from -4 to 5  
   \color{black}
    \put{\beginpicture
    \plot -3 -3.5 -3 3.5 /
    \plot 0 -3.5 0 3.5 /
    \plot 3 -3.5 3 3.5 /
    \color{Gray}
    \arrow <5pt> [.2,.67] from 4 3 to 1 1.5
    \color{black}
    \arrow <5pt> [.2,.67] from 0.5 1 to -0.7 1
    \color{Gray}
    \arrow <5pt> [.2,.67] from -1 0.5 to -2 -0.5
    \color{black}
    \plot -2.25 -0.75 -2.75 -0.75 -2.75 -1.25 /
    \arrow <5pt> [.2,.67] from -2.75 -1.25 to -2.15 -1.25
    \color{Gray}
    \arrow <5pt> [.2,.67] from -2 -1.5 to 1 -3
    \color{black}
    \endpicture} at -7 0
    \put{$\scriptstyle{H_{\gamma}}$} at -7 5
    \put{$\scriptstyle{H_{\gamma+\delta}}$} at -10 5
    \put{$\scriptstyle{H_{\gamma-\delta}}$} at -4 5
    \put{$-$} at -8 3.5
    \put{$+$} at -6 3.5
    \put{\beginpicture
    \plot -3 -3.5 -3 3.5 /
    \plot 0 -3.5 0 3.5 /
    \plot 3 -3.5 3 3.5 /
    \color{Gray}
    \arrow <5pt> [.2,.67] from 4 3 to 1 1.5
    \color{black}
    \plot 0.75 1.25 0.25 1.25 0.25 0.75 /
    \arrow <5pt> [.2,.67] from 0.25 0.75 to 0.85 0.75
    \color{Gray}
    \arrow <5pt> [.2,.67] from 1 0.5 to 2 -0.5
    \color{black}
    \arrow <5pt> [.2,.67] from 2.5 -1 to 3.7 -1
    \color{Gray}
    \arrow <5pt> [.2,.67] from 4 -1.5 to 7 -3
    \color{black}
    \endpicture} at 7 0
    \arrow <5pt> [.2,.67] from -1 0 to 1 0
    \put{$\tilde{e}_i$} at 0 1.25
    \plot -1 -0.25 -1 0.25 /
    \put{$\scriptstyle{H_{\gamma}}$} at 7 5
    \put{$\scriptstyle{H_{\gamma+\delta}}$} at 4 5
    \put{$\scriptstyle{H_{\gamma-\delta}}$} at 10 5
    \put{$-$} at 6 3.5
    \put{$+$} at 8 3.5
\endpicture
$$
    \item[(b)] If $p$ has a positive crossing on $H_{\gamma}$ after the $i$-critical crossing then
$$\beginpicture
    \setcoordinatesystem units <0.4cm,0.175cm>         
    \setplotarea x from -4 to 4, y from -4 to 5  
    \put{\beginpicture
    \plot -3 -3.5 -3 3.5 /
    \plot 0 -3.5 0 3.5 /
    \plot 3 -3.5 3 3.5 /
    \color{Gray}
    \arrow <5pt> [.2,.67] from 4 3 to 1 1.5
    \color{black}
    \arrow <5pt> [.2,.67] from 0.5 1 to -0.7 1
    \color{Gray}
    \plot -1 0.5 -1.25 0.25 -0.75 -0.25 -1 -0.5 /
    \arrow <5pt> [.2,.67] from -1 -0.5 to -1 -1.5
    \color{black}
    \arrow <5pt> [.2,.67] from -0.5 -1.5 to 0.7 -1.5
    \color{Gray}
    \arrow <5pt> [.2,.67] from 1.5 -2 to 4 -3.25
    \color{black}
    \endpicture} at -7 0
    \put{$\scriptstyle{H_{\gamma}}$} at -7 5
    \put{$\scriptstyle{H_{\gamma+\delta}}$} at -10 5
    \put{$\scriptstyle{H_{\gamma-\delta}}$} at -4 5
    \put{$-$} at -8 3.5
    \put{$+$} at -6 3.5
    \put{\beginpicture
    \plot -3 -3.5 -3 3.5 /
    \plot 0 -3.5 0 3.5 /
    \plot 3 -3.5 3 3.5 /
    \color{Gray}
    \arrow <5pt> [.2,.67] from 4 3 to 1 1.5
    \color{black}
    \plot 0.75 1.25 0.25 1.25 0.25 0.75 /
    \arrow <5pt> [.2,.67] from 0.25 0.75 to 1 0.75
    \color{Gray}
    \plot 1.2 0.5 1.45 0.25 0.95 -0.25 1.2 -0.5 /
    \arrow <5pt> [.2,.67] from 1.2 -0.5 to 1.2 -1.5
    \color{black}
    \plot 0.75 -1.5 0.25 -1.5 0.25 -2 /
    \arrow <5pt> [.2,.67] from 0.25 -2 to 1 -2
    \color{Gray}
    \arrow <5pt> [.2,.67] from 1.5 -2 to 4 -3.25
    \color{black}
    \endpicture} at 7 0
    \arrow <5pt> [.2,.67] from -1 0 to 1 0
    \put{$\tilde{e}_i$} at 0 1.25
    \plot -1 -0.25 -1 0.25 /
    \put{$\scriptstyle{H_{\gamma}}$} at 7 5
    \put{$\scriptstyle{H_{\gamma+\delta}}$} at 4 5
    \put{$\scriptstyle{H_{\gamma-\delta}}$} at 10 5
    \put{$-$} at 6 3.5
    \put{$+$} at 8 3.5
\endpicture
$$
    \item[(c)] The case when $p$ has neither a fold on $H_{\gamma+\delta}$ nor a positive crossing on $H_{\gamma}$ is computed as a degenerate case of case (a) (or (b)). It occurs when $p$ has no interactions with $\alpha_i+\ZZ\delta$ hyperplanes after the $i$-critical crossing on $H_{\gamma}$.
\end{enumerate}
\end{defn}    

\begin{remark} The above definition of $\tilde{e}_i$ is a synthesis of the operators $e_{\alpha}$ and $\tilde{e}_{\alpha}$ (with $\alpha=\alpha_i$) from \cite[Definitions~19 and~21]{litt}. 
\end{remark}

\begin{prop}\label{prop:rt} If $p$ is positively folded and $\tilde{e}_i(p)\neq0$ then $\tilde{e}_i(p)$ is positively folded and $\dim(\tilde{e}_i(p))=\dim(p)+1$. In case (a) $\mathrm{end}(\tilde{e}_i(p))=t_{\alpha^{\vee}_i}\mathrm{end}(p)$, where $t_{\alpha^{\vee}_i}$ is the translation in $\alpha_i^{\vee}$, in case (b) $\mathrm{end}(\tilde{e}_i(p))=\mathrm{end}(p)$, and in case (c) $\mathrm{end}(\tilde{e}_i(p))=s_{\gamma}\mathrm{end}(p)$.
\end{prop}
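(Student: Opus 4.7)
My plan is a case-by-case analysis using an explicit description of $\tilde{e}_i(p)$. In all three cases, decompose $p = p_1 \cdot c \cdot p_2 \cdot x \cdot p_3$, where $c$ is the $i$-critical crossing on $H_\gamma$, $x$ is the first interaction of $p$ with $H_\gamma$ or $H_{\gamma+\delta}$ after $c$ (a positive fold on $H_{\gamma+\delta}$ in case (a), a positive crossing on $H_\gamma$ in case (b), empty in case (c)), $p_2$ is the intermediate walk, and $p_3$ is the remainder. The first step is a structural claim: $p_2$ lies entirely in the open strip $S := \{y : -(k+1) < \langle y,\alpha_i\rangle < -k\}$ and has no crossings or folds on any hyperplane $H_{\alpha_i+m\delta}$. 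This combines (i) the maximality of $k$, which forbids $p$ from a negative crossing on $H_{\alpha_i + (k+1)\delta}$ and so prevents $p_2$ from exiting $S$ downwards; (ii) the positive-foldedness of $p$, which rules out folds on $H_\gamma$ or on $H_{\gamma+\delta}$ from their negative sides; and (iii) the first-occurrence choice of $x$, which excludes a positive crossing of $p_2$ on $H_\gamma$ and a fold of $p_2$ on $H_{\gamma+\delta}$.

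Next I would establish a reflection lemma: for every $\beta \in R^+ \setminus \{\alpha_i\}$ and $m \in \ZZ$, the reflection $s_\gamma$ sends $H_{\beta+m\delta}$ to $H_{\beta'+m'\delta}$ with $\beta' = s_{\alpha_i}(\beta)\in R^+$, $m' = m - k\langle \beta,\alpha_i^\vee\rangle$, and sends the $+$ side to the $+$ side. This follows from the identity $\langle s_\gamma(y),\beta\rangle = \langle y,\beta'\rangle + m' - m$. Combined with the structural claim, it shows that $s_\gamma(p_2)$ is a positively folded walk in the reflected strip $\{y : -k < \langle y,\alpha_i\rangle < -(k-1)\}$, with the same statistics $\epsilon^\pm$ and $f$ as $p_2$. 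Using also that $t_{\alpha_i^\vee}$ preserves orientations, together with the identity $s_\gamma s_{\gamma+\delta} = t_{\alpha_i^\vee}$, one obtains the explicit formulas
\begin{align*}
\text{(a)}\quad \tilde{e}_i(p) &= p_1 \cdot f^+_\gamma \cdot s_\gamma(p_2) \cdot c^+_{\gamma-\delta} \cdot t_{\alpha_i^\vee}(p_3),\\
\text{(b)}\quad \tilde{e}_i(p) &= p_1 \cdot f^+_\gamma \cdot s_\gamma(p_2) \cdot f^+_\gamma \cdot p_3,\\
\text{(c)}\quad \tilde{e}_i(p) &= p_1 \cdot f^+_\gamma \cdot s_\gamma(p_2),
\end{align*}
where $f^+_\gamma$ denotes a positive fold on $H_\gamma$ and $c^+_{\gamma-\delta}$ a positive crossing on $H_{\gamma-\delta}$. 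In case (b), the reflected $p_2$ ends at the fixed point of $s_\gamma$ on $H_\gamma$ where the original positive crossing occurred, so $p_3$ is not translated.

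All three conclusions then follow. Positive-foldedness is visible from the formulas: the new features at $H_\gamma$ and $H_{\gamma-\delta}$ are positive by inspection of the sides the walk occupies, $s_\gamma(p_2)$ is positively folded by the reflection lemma, and $t_{\alpha_i^\vee}(p_3)$ is positively folded because translation preserves orientations. The endpoint statements are immediate from the formulas. For the dimension, the replacement $c \leftrightarrow f^+_\gamma$ converts a negative crossing (contributing $0$ to $\dim = \epsilon^+ + f$) into a positive fold (contributing $1$), while the replacement at $x$ leaves the dimension contribution unchanged in each case ($1$ before and $1$ after, when $x$ is present); all other pieces contribute the same before and after. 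Hence $\dim(\tilde{e}_i(p)) = \dim(p)+1$. The main obstacle is the structural claim about $p_2$, which requires combining the three subtle hypotheses (maximality of $k$, positive-foldedness, and the first-occurrence choice of $x$) carefully.
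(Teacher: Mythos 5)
Your argument is correct and follows essentially the same route as the paper's proof: the same decomposition of $p$ into the piece before the critical crossing, the reflected middle piece, and the translated (or unchanged) tail, with positive-foldedness of the reflected part resting on the fact that $s_{i}$ permutes $R^+\setminus\{\alpha_i\}$ and the endpoint formula on $s_{\gamma}s_{\gamma+\delta}=t_{\alpha_i^{\vee}}$. Your structural claim about $p_2$ staying in the strip and your explicit reflection lemma simply make precise what the paper leaves implicit in its pictures.
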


\begin{proof} Recall that $\dim(p) = \epsilon^+(p)+f(p)$ and consider case (a). 
The first grey part of $p$ is unchanged by $\tilde{e}_i$, and 
the crossing on $H_{\gamma}$ becomes a fold, which increases dimension by~$1$. 
The middle grey part of $p$ is reflected in $H_{\gamma}$ and, 
since $s_i$ permutes $R^+\backslash\{\alpha_i\}$, 
this reflected part remains positively folded and positive crossings in this portion remain positive after 
the reflection.  Thus the dimension contribution of this part is unchanged. 
The fold on $H_{\gamma+\delta}$ becomes a positive crossing, 
which does not change dimension, and 
the final grey part of $p$ is translated by $\alpha_i^{\vee}$, 
since $s_{\gamma}s_{\gamma+\delta}=t_{\alpha_i^{\vee}}$. 
Therefore $\tilde{e}_i(p)$ is positively folded with dimension $\dim(p)+1$ 
and $\mathrm{end}(\tilde{e}_i(p))=t_{\alpha_i^{\vee}}\mathrm{end}(p)$. 
Cases (b) and (c) are similar.
\end{proof}

\begin{cor}\label{cor:push} Let $p\in\cP(\vec\la)_{\mu}$. If $\dim(p)=\langle\la+\mu,\rho\rangle$ and $\tilde{e}_i(p)\neq 0$, then $\tilde{e}_i(p)\in\cP(\vec\la)_{\mu+\alpha_i^{\vee}}$.
\end{cor}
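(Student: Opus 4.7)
The plan is to combine Proposition~\ref{prop:rt} with the dimension bound of Lemma~\ref{lem:bound}. First observe that $\tilde e_i$ does not alter the underlying sequence of simple reflections of $p$; it only toggles crossings and folds on the $i$-critical hyperplane and then reflects/translates a tail of the walk. So $\tilde e_i(p)$ has the same type $\vec u\cdot\vec m_\la$ as $p$, and by Proposition~\ref{prop:rt} it is positively folded. Hence $\tilde e_i(p)\in\cP(\vec\la)_{\mu'}$, where $\mu'\in P$ is determined by $\mathrm{end}(\tilde e_i(p))\in t_{\mu'}W_0$, and it remains to show $\mu'=\mu+\alpha_i^{\vee}$.

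By Proposition~\ref{prop:rt} we have $\dim(\tilde e_i(p))=\dim(p)+1=\langle\la+\mu,\rho\rangle+1$, while Lemma~\ref{lem:bound} applied to $\tilde e_i(p)\in\cP(\vec\la)_{\mu'}$ gives $\dim(\tilde e_i(p))\le\langle\la+\mu',\rho\rangle$. Combining these yields
$$\langle\mu'-\mu,\rho\rangle\ge 1. \qquad (\dagger)$$

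I would then run through the three cases of the definition of $\tilde e_i$. In case~(a), Proposition~\ref{prop:rt} gives $\mathrm{end}(\tilde e_i(p))=t_{\alpha_i^{\vee}}\mathrm{end}(p)$, so $\mu'=\mu+\alpha_i^{\vee}$ immediately. In case~(b), $\mathrm{end}(\tilde e_i(p))=\mathrm{end}(p)$ forces $\mu'=\mu$, contradicting $(\dagger)$; hence case~(b) cannot occur under the maximum-dimension hypothesis. In case~(c), setting $\gamma=\alpha_i+k\delta$ and computing $s_\gamma t_\mu=t_{s_{\alpha_i}\mu-k\alpha_i^{\vee}}\,s_{\alpha_i}$ shows that
$$\mu'=\mu-(\langle\mu,\alpha_i\rangle+k)\,\alpha_i^{\vee}.$$

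The step I expect to be the main obstacle is pinning down the integer $\langle\mu,\alpha_i\rangle+k$ in case~(c). The geometric input is that, by the very definition of case~(c), the walk has no interactions with any hyperplane $H_{\alpha_i+j\delta}$ after the $i$-critical crossing on $H_{\alpha_i+k\delta}$, so the end alcove lies in the open strip $\{x\in\fh_{\RR}\mid -k-1<\langle x,\alpha_i\rangle<-k\}$. Since $0$ is a vertex of $c_0$ and $W_0$ fixes $0$, the point $\mu$ is a vertex of every alcove in $t_\mu W_0$; combined with $\mu\in P$ this forces $\langle\mu,\alpha_i\rangle\in\{-k-1,-k\}\subset\ZZ$, and therefore $\mu'\in\{\mu,\mu+\alpha_i^{\vee}\}$. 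Inequality $(\dagger)$ excludes $\mu'=\mu$, leaving $\mu'=\mu+\alpha_i^{\vee}$ and completing the proof.
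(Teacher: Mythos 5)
Your proof is correct and follows essentially the same route as the paper: apply Proposition~\ref{prop:rt} to get $\dim(\tilde e_i(p))=\langle\la+\mu,\rho\rangle+1$, observe that the end alcove of $\tilde e_i(p)$ can only land in $t_\mu W_0$ or $t_{\mu+\alpha_i^\vee}W_0$, and use Lemma~\ref{lem:bound} to exclude the former. The only difference is that you justify in detail the case~(c) claim that $\mu'\in\{\mu,\mu+\alpha_i^\vee\}$ (via the strip $-k-1<\langle x,\alpha_i\rangle<-k$ and integrality of $\langle\mu,\alpha_i\rangle$), which the paper simply asserts; this is a welcome addition rather than a divergence.
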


\begin{proof} If $p\in\cP(\vec\la)_{\mu}$ and $\tilde{e}_i(p)\neq 0$ then $\tilde{e}_i(p)\in\cP(\vec\la)_{\mu+\alpha_i^{\vee}}$ or $\tilde{e}_i(p)\in\cP(\vec\la)_{\mu}$ (with the former occurring in case (a) and the latter in case (b), with either possible in case (c)). By
Proposition~\ref{prop:rt} $\dim(\tilde e_i(p)) = \langle \lambda+\mu,\rho\rangle + 1$, and thus,
by Lemma~\ref{lem:bound}, $\tilde{e}_i(p)\notin\cP(\vec\la)_{\mu}$.
\end{proof}

\begin{Example}\label{ex:2} Let $p$ be the alcove walk from Example~\ref{ex:1}. The $1$-critical hyperplane of $p$ is $H_{\alpha_1+5\delta}$, and the $1$-critical crossing is the 13th step of $p$. There is no $2$-critical  crossing. Therefore $\tilde{e}_2(p)=0$, and $\tilde{e}_1(p)$ is the walk
$$\beginpicture
\setcoordinatesystem units <0.8cm,0.8cm>         
\setplotarea x from -4 to 4, y from -3.2 to 3.2  
    \color{black}
    \plot 4.3 -2.944 5.2 -1.3856 /
    \plot 3.3 -2.944 5.2 0.346 /
    \plot 2.3 -2.944 5.2 2.078 /
    \plot 1.3 -2.944 4.7 2.944 /
    \plot 0.3 -2.944 3.7 2.944 /
    \plot -0.7 -2.944 2.7 2.944 /
    \plot -1.7 -2.944 1.7 2.944 /
    \plot -2.7 -2.944 0.7 2.944 /
    \plot -3.7 -2.944 -0.3 2.944 /
    \plot -3.7 -1.212 -1.3 2.944 /
    \plot -3.7 0.520 -2.3 2.944 /
    \plot -3.7 2.2516 -3.3 2.944 /
    \plot 4.3 2.944 5.2 1.3856 /
    \plot 3.3 2.944 5.2 -0.346 /
    \plot 2.3 2.944 5.2 -2.078 /
    \plot 1.3 2.944 4.7 -2.944 /
    \plot 0.3 2.944 3.7 -2.944 /
    \plot -0.7 2.944 2.7 -2.944 /
    \plot -1.7 2.944 1.7 -2.944 /
    \plot -2.7 2.944 0.7 -2.944 /
    \plot -3.7 2.944 -0.3 -2.944 /
    \plot -3.7 1.212 -1.3 -2.944 /
    \plot -3.7 -0.520 -2.3 -2.944 /
    \plot -3.7 -2.2516 -3.3 -2.944 /
    \plot -3.7 -2.598 5.2 -2.598 /
    \plot -3.7 -1.732 5.2 -1.732 /
    \plot -3.7 -0.866 5.2 -0.866 /
    \plot -3.7 0 5.2 0 /
    \plot -3.7 2.598 5.2 2.598 /
    \plot -3.7 1.732 5.2 1.732 /
    \plot -3.7 0.866 5.2 0.866 /
    \arrow <5pt> [.2,.67] from -2 0.577 to -1.5 0.289   %
    \arrow <5pt> [.2,.67] from -1.5 0.289 to -1.5 -0.289   %
    \arrow <5pt> [.2,.67] from -1.5 -0.289 to -1 -0.577   %
    \arrow <5pt> [.2,.67] from -1 -0.577 to -1 -1.082   %
    \put{\beginpicture
    \setcoordinatesystem units <0.75cm,0.75cm>         
    \plot 0 0 -0.28 -0.15 /
    \plot -0.28 -0.15 -0.2 -0.3 /
    \arrow <5pt> [.2,.67] from -0.2 -0.3 to 0.05 -0.13
    \endpicture} at -1 -1.082
    \arrow <5pt> [.2,.67] from -0.95 -1.155 to -0.55 -1.432   %
    \put{\beginpicture
    \setcoordinatesystem units <0.7cm,0.7cm>         
    \arrow <5pt> [.2,.67] from 0.08 -0.32 to 0.08 0 %
    \plot 0.07 -0.32 -0.08 -0.32 /
    \plot -0.08 -0.32 -0.08 0 /
    \endpicture} at -0.5 -1.417
    \arrow <5pt> [.2,.67] from -0.45 -1.432 to 0 -1.155   %
    \arrow <5pt> [.2,.67] from 0 -1.155 to 0.5 -1.443   %
    \arrow <5pt> [.2,.67] from 0.5 -1.443 to 1 -1.155   %
    \arrow <5pt> [.2,.67] from 1 -1.155 to 1.5 -1.443   %
    \arrow <5pt> [.2,.67] from 1.5 -1.443 to 1.97 -1.23   %
    \put{\beginpicture
    \setcoordinatesystem units <0.7cm,0.7cm>         
    \arrow <5pt> [.2,.67] from 0.28 -0.15 to 0 0
    \plot 0.28 -0.15 0.2 -0.3 /
    \plot 0.2 -0.3 -0.05 -0.13 /
    \endpicture} at 2 -1.155
    \arrow <5pt> [.2,.67] from 2 -1.13 to 2 -0.577   %
    \arrow <5pt> [.2,.67] from 2 -0.577 to 1.5 -0.289   %
    \arrow <5pt> [.2,.67] from 1.5 -0.289 to 1.5 0.289   %
    \arrow <5pt> [.2,.67] from 1.5 .289 to 1 0.577   %
    \arrow <5pt> [.2,.67] from 1 0.577 to 1 1.155   %
    \arrow <5pt> [.2,.67] from 1 1.155 to 0.5 1.443   %
    \arrow <5pt> [.2,.67] from 0.5 1.443 to 0.5 2.021   %
    \color{Gray}
    \arrow <5pt> [.2,.67] from 2.27 -1.35 to 2.5 -1.443   %
    \arrow <5pt> [.2,.67] from 2.5 -1.443 to 2.97 -1.23   %
    \put{\beginpicture
    \setcoordinatesystem units <0.7cm,0.7cm>         
    \arrow <5pt> [.2,.67] from 0.28 -0.15 to 0 0
    \plot 0.28 -0.15 0.2 -0.3 /
    \plot 0.2 -0.3 -0.05 -0.13 /
    \endpicture} at 3 -1.155
    \arrow <5pt> [.2,.67] from 3 -1.13 to 3 -0.577   %
    \arrow <5pt> [.2,.67] from 3 -0.577 to 2.5 -0.289   %
    \arrow <5pt> [.2,.67] from 2.5 -0.289 to 2.5 0.289   %
    \arrow <5pt> [.2,.67] from 2.5 0.289 to 2 0.577   %
    \arrow <5pt> [.2,.67] from 2 0.577 to 2 1.155   %
    \color{black}
\endpicture$$
Note that the end vertex is translated by $\alpha_1^{\vee}$. The operator $\tilde{e}_1$ can be applied~$5$ more times before killing the walk.
\end{Example}

\section{Proof of Theorem $\mathbf{C}'$}

A subset $\Pi\subseteq P$ is \textit{saturated} if for all $\alpha\in R$ and $\mu\in\Pi$, if $k$ is between $0$ and $\langle\mu,\alpha\rangle$ (inclusive) then $\mu-k\alpha^{\vee}\in\Pi$ (in particular, $W_0\Pi=\Pi$). For each $\la\in P^+$ there is a unique saturated set $\Pi_{\la}$ with the property that $\mu\preceq\la$ for all $\mu\in\Pi_{\la}$, where the partial order is given by $\mu\preceq\la$ if and only if $\la-\mu\in Q^+$. Explicitly we have (see \cite[(2.6.2)]{m} for example)
$$
\Pi_{\la}=\{w\mu\mid\mu\preceq\la,\mu\in P^+,w\in W_0\}=\bigcap_{w\in W_0}w(\la- Q^+)=\mathrm{conv}(W_0\la)\cap (\la+Q).
$$
The following lemma is modelled on the \textit{adapted strings} of \cite{littelmanncrystals} (see also \cite[Proposition~6.5]{kamnitzer}).

\begin{lemma}\label{lem:fall} Let $\la\in P^+$ and $\mu\in \Pi_{\la}$. Fix a reduced decomposition $w_0=s_{i_1}\cdots s_{i_N}$ of the longest element of $W_0$ and define coweights $\mu_0,\ldots,\mu_N\in\Pi_{\la}$ by $\mu_0=\mu$ and
$$
\mu_k=\mu_{k-1}-m_k\alpha_{i_k}^{\vee},\qquad\textrm{where}\qquad m_k=\max\{m\in\ZZ_{\geq0}\mid \mu_{k-1}-m\alpha_{i_k}^{\vee}\in\Pi_{\la}\}
$$
for $1\leq k\leq N$. Then $\mu_{N}=w_0\la$.
\end{lemma}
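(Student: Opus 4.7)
The plan is to argue $\mu_N = w_0\la$ via the characterisation that $w_0\la$ is the unique element $\nu \in \Pi_\la$ with $\nu - \alpha^\vee \notin \Pi_\la$ for every $\alpha \in R^+$. I would proceed in three steps.

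\emph{Setup.} The set $\Pi_\la \subseteq \mathrm{conv}(W_0\la) \cap (\la + Q)$ is bounded and discrete, hence finite, so each $m_k$ is a well-defined non-negative integer and $\mu_k \in \Pi_\la$. The saturation of $\Pi_\la$, applied to $\mu_{k-1}$ and $\alpha_{i_k}$, shows that $\{m \in \ZZ : \mu_{k-1}-m\alpha_{i_k}^\vee \in \Pi_\la\}$ is an interval (the $\alpha_{i_k}$-string through $\mu_{k-1}$ inside $\Pi_\la$), so $\mu_k$ is the bottom of this string. Consequently $\mu_k - \alpha_{i_k}^\vee \notin \Pi_\la$, $\langle\mu_k,\alpha_{i_k}\rangle \le 0$, and $\mu_0 \succeq \mu_1 \succeq \cdots \succeq \mu_N$ in the $Q^+$-partial order, since each $m_k\alpha_{i_k}^\vee \in Q^+$.

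\emph{Reduction.} From $\Pi_\la = \bigcap_{w\in W_0} w(\la - Q^+)$ and $w_0(Q^+) = -Q^+$ one checks that $w_0\la$ is the unique $\preceq$-minimum of $\Pi_\la$; since every positive coroot lies in $Q^+$, this already gives $w_0\la - \alpha^\vee \notin \Pi_\la$ for every $\alpha \in R^+$. Conversely, if $\nu \in \Pi_\la$ satisfies $\nu - \alpha^\vee \notin \Pi_\la$ for every $\alpha \in R^+$, then saturation in simple-root directions forces $\nu$ to be antidominant, and a further argument via the polytope description $\Pi_\la = \mathrm{conv}(W_0\la)\cap(\la + Q)$, applied to the expansion $\nu - w_0\la = \sum c_i\alpha_i^\vee \in Q^+$ to exhibit a single-coroot descent whenever $\nu \succ w_0\la$, pins down $\nu = w_0\la$. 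Hence it suffices to show that $\mu_N - \alpha^\vee \notin \Pi_\la$ for every $\alpha \in R^+$.

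\emph{Main step.} List the positive roots via the inversion sequence $\beta_k = s_{i_1}\cdots s_{i_{k-1}}(\alpha_{i_k})$, $1 \le k \le N$; these exhaust $R^+$ because $s_{i_1}\cdots s_{i_N}$ is a reduced expression for $w_0$. By $W_0$-invariance of $\Pi_\la$, the condition $\mu_N - \beta_k^\vee \notin \Pi_\la$ is equivalent to $(s_{i_{k-1}}\cdots s_{i_1})(\mu_N) - \alpha_{i_k}^\vee \notin \Pi_\la$. I would establish the latter by induction on $k$, using the setup's bottom-of-string property at step $k$ together with the fact that none of the subsequent indices $i_{k+1}, \ldots, i_N$ can destroy the relevant antidominance (Cartan integers between distinct simple roots are non-positive). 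The main obstacle is propagating the bottom-of-string property across these later greedy steps: the naïve invariant ``$\mu_j - \beta_k^\vee \notin \Pi_\la$ for all $k \le j$'' already fails at intermediate stages of rank-two examples (a point that sits at the bottom of an $\alpha$-string at one step can re-enter the interior of that string after subsequent greedy moves), so the induction must track a finer invariant specifically adapted to the reduced expression for $w_0$, in the spirit of the adapted strings of Littelmann and Kashiwara (cf.\ Kamnitzer's Proposition~6.5, referenced in the text).
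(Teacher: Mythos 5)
Your overall strategy --- characterise $w_0\la$ as the unique $\nu\in\Pi_{\la}$ admitting no positive-coroot descent, then verify that $\mu_N$ has this property by running through the inversion sequence of the reduced word --- is a legitimate route in principle, but as written there is a genuine gap exactly where the content of the lemma lies. The \emph{Setup} and the \emph{Reduction} are fine modulo the standard chain-of-positive-roots fact you allude to. The problem is the \emph{Main step}: you reduce to showing $(s_{i_{k-1}}\cdots s_{i_1})(\mu_N)-\alpha_{i_k}^{\vee}\notin\Pi_{\la}$ for each $k$, but the only input the construction gives you is that $\mu_k-\alpha_{i_k}^{\vee}\notin\Pi_{\la}$, and $(s_{i_{k-1}}\cdots s_{i_1})(\mu_N)$ is neither $\mu_k$ nor obviously related to it: the later greedy steps move the point, and, as you yourself observe, the bottom-of-string property at step $k$ is not preserved under the subsequent moves. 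You then state that ``the induction must track a finer invariant'' without producing one. Since that invariant is precisely what makes the lemma true, the proposal amounts to a reduction plus an acknowledged missing step, not a proof.

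For comparison, the paper avoids the descent characterisation of $w_0\la$ altogether. It introduces the auxiliary sequence of polytope vertices $\la_0=\la$, $\la_k=s_{i_k}\la_{k-1}$ (so $\la_N=w_0\la$) and proves by induction the single invariant $\mu_k\preceq\la_k$: writing $\la_{k-1}-\mu_{k-1}=\gamma+c\,\alpha_{i_k}^{\vee}$ with $\gamma\in\sum_{i\neq i_k}\ZZ_{\geq0}\alpha_i^{\vee}$ and $c\geq 0$, the maximality of $m_k$ together with one application of saturation yields $m_k\geq\langle\la_{k-1},\alpha_{i_k}\rangle-c$, which is exactly what is needed to preserve the invariant. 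The conclusion is then immediate, since $\mu_N\preceq\la_N=w_0\la$ and $\mu_N\in\Pi_{\la}$ force $\mu_N=w_0\la$. This comparison $\mu_k\preceq\la_k$ is the ``finer invariant adapted to the reduced expression'' that your main step is missing; if you want to salvage your approach, replacing the family of non-descent conditions at $\mu_N$ by this single inequality, propagated step by step, is the way to do it.
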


\begin{proof} Define coweights $\la_0,\ldots,\la_N\in\Pi_{\la}$ by $\la_0=\la$ and
$$
\la_{k}=s_{i_k}\la_{k-1}=\la_{k-1}-\langle\la_{k-1},\alpha_{i_k}\rangle\alpha_{i_k}^{\vee}\quad\textrm{for $1\leq k\leq N$}.
$$
These are vertices of the polytope $\mathrm{Conv}(W_0\lambda)$, 
the convex hull of the $W_0$-orbit of $\lambda$.  See the picture in Example \ref{museq}.
We will show that $\mu_k\preceq\la_k$ for all $k=0,\ldots,N$. The result of Lemma~\ref{lem:fall} will follow, because $\mu_N\preceq\la_N=w_0\la$ and $\mu_N\in\Pi_{\la}$ imply that $\mu_N=w_0\la$.

We have $\mu_0=\mu\preceq\la=\la_0$, and by induction $\la_{k-1}-\mu_{k-1}\in\ZZ_{\geq0}\alpha_1^{\vee}+\cdots+\ZZ_{\geq0}\alpha_n^{\vee}$, and so
\begin{align*}
\la_k-\mu_k=\la_{k-1}-\mu_{k-1}+(m_k-\langle\la_{k-1},\alpha_{i_k}\rangle)\alpha_{i_k}^{\vee}=\gamma+(m_k+c-\langle\la_{k-1},\alpha_{i_k}\rangle)\alpha_{i_k}^{\vee}
\end{align*}
where $\lambda_{k-1}-\mu_{k-1} = \gamma+c\alpha_{i_k}^\vee$ with 
$\gamma\in\sum_{i\neq i_k}\ZZ_{\geq0}\alpha_i^{\vee}$ and 
$c\alpha_{i_k}^{\vee}\in\ZZ_{\geq0}\alpha_{i_k}^{\vee}$. 
We will show that $m_k+c-\langle\la_{k-1},\alpha_{i_k}\rangle\geq0$. 

We have
$$
0\leq\langle\la_{k-1},\alpha_{i_k}\rangle=\langle\mu_{k-1}+c\alpha_{i_k}^{\vee}+\gamma,\alpha_{i_k}\rangle\leq\langle\mu_{k-1}+c\alpha_{i_k}^{\vee},\alpha_{i_k}\rangle
$$
(since $\langle\la_{k-1},\alpha_{i_k}\rangle=\langle\la,s_{i_1}\cdots s_{i_{k-1}}\alpha_{i_k}\rangle\geq0$ and $\langle\alpha_i^{\vee},\alpha_j\rangle\leq0$ for $i\neq j$). Thus, by the property of saturated sets, we have
$$
\mu_{k-1}-(\langle\la_{k-1},\alpha_{i_k}\rangle-c)\alpha_{i_k}^{\vee}=(\mu_{k-1}+c\alpha_{i_k}^{\vee})-\langle\la_{k-1},\alpha_{i_k}\rangle\alpha_{i_k}^{\vee}\in\Pi_{\la},
$$
and so by the definition of $m_k$ we have $m_k\geq\langle\la_{k-1},\alpha_{i_k}\rangle-c$. 
Thus $m_k+c-\langle \lambda_{k-1},\alpha_{i_k}^\vee\rangle\ge 0$ which completes the
induction step.  
\end{proof}

\begin{proof}[Proof of Theorem $\mathbf{C}'$] Suppose that $p\in\cP(\vec\la)_{\mu}$. If $p$ has type $\vec w$ (with $w\in\tilde{W}$) then the end alcove $v\in\tilde{W}$ of $p$ is a subexpression of $\vec w$, and thus $v\leq w$ in Bruhat order. Since $w(0)\in\Pi_{\la}$, using \cite[Proposition~3.1]{p2} we have $v(0)\in\Pi_{\la}$. But $v(0)=\mu$. Thus if $\mu\notin\Pi_{\la}$ then $\cP(\vec\la)_{\mu}=\emptyset$. 

Suppose that $\mu\in \Pi_{\la}$. We will construct a path $p\in\cP(\vec\la)_{\mu}$ with $\dim(p)=\langle\la+\mu,\rho\rangle$. Let $w_0=s_{i_1}\cdots s_{i_N}$ be a reduced expression, and define $\mu_k$ and $m_k$ as in Lemma~\ref{lem:fall}. Define $p_0,\ldots,p_N$ by $p_N=p_{w_0\la}$ (the canonical antidominant path) and 
$$
p_{k-1}=\tilde{e}_{i_{k}}^{m_{k}}(p_{k}),\qquad\textrm{for $k=1,\ldots,N$.}
$$
We claim that $p_k\neq 0$ for all $k=0,\ldots,N$, and that
\begin{align*}
p_k\in\cP(\vec\la)_{\mu_k}\quad\textrm{and}\quad\dim(p_k)=\langle\la+\mu_k,\rho\rangle.
\end{align*}
By Lemma~\ref{lem:fall},  $p_N\in\cP(\vec\la)_{\mu_N}$ and $\dim(p_N)=0=\langle\la+w_0\la,\rho\rangle$, starting a descending induction. Let $\beta_k=\alpha_{i_k}+(m_k-\langle\mu_{k-1},\alpha_{i_k}\rangle)\delta$.  With the orientation from Section~\ref{subsect:alcovewalk}, we have
\begin{align*}
\begin{aligned}
0&\in H_{\beta_k}^+,\\
\mu_{k}&\in H_{\beta_k+m_k\delta},
\end{aligned}\quad
\begin{aligned}
&\textrm{because}\\
&\textrm{because}
\end{aligned}\quad
\begin{aligned}
\langle 0,\beta_k\rangle&=m_k-\langle\mu_{k-1},\alpha_{i_k}\rangle\geq0,\quad\textrm{and}\\
\langle\mu_k,\beta_k\rangle&=0.
\end{aligned}
\end{align*}
(We have $m_k\geq\langle\mu_{k-1},\alpha_{i_k}\rangle$ because $s_{i_k}\mu_{k-1}=\mu_{k-1}-\langle\mu_{k-1},\alpha_{i_k}\rangle\alpha_{i_k}^{\vee}\in\Pi_{\la}$). Therefore any alcove walk $p$ starting at $1\in\tilde{W}$ with end vertex $\mu_k$ must make negative crossings on each of the hyperplanes $H_{\beta_k},H_{\beta_k+\delta},\ldots,H_{\beta_k+(m_k-1)\delta}$, as illustrated:
\begin{align}\label{eq:pict}
\beginpicture
\setcoordinatesystem units <0.8cm,0.8cm>         
\setplotarea x from -0.8 to 0.7, y from -0.5 to 0.5  
\put{$\scriptstyle{H_{\beta_k+(m_k-1)\delta}}$}[b] at 0 0.6
\put{$\scriptstyle{-}$}[b] at -0.4 0.25
\put{$\scriptstyle{+}$}[b] at 0.4 0.25
\put{$\scriptstyle{\mu_k}$}[br] at -0.45 -0.1
\plot  0 -0.4  0 0.5 /
\arrow <5pt> [.2,.67] from 0.3 0 to -0.35 0   %
\endpicture
\beginpicture
\setcoordinatesystem units <0.8cm,0.8cm>         
\setplotarea x from -0.8 to 0.7, y from -0.5 to 0.5  
\put{$\scriptstyle{-}$}[b] at -0.4 0.25
\put{$\scriptstyle{+}$}[b] at 0.4 0.25
\plot  0 -0.4  0 0.5 /
\arrow <5pt> [.2,.67] from 0.3 0 to -0.36 0   %
\endpicture\cdots
\beginpicture
\setcoordinatesystem units <0.8cm,0.8cm>         
\setplotarea x from -0.8 to 0.7, y from -0.5 to 0.5  
\put{$\scriptstyle{H_{\beta_k}}$}[b] at 0 0.6
\put{$\scriptstyle{-}$}[b] at -0.4 0.25
\put{$\scriptstyle{+}$}[b] at 0.4 0.25
\put{$\scriptstyle{1}$}[bl] at 0.45 -0.1
\plot  0 -0.4  0 0.5 /
\arrow <5pt> [.2,.67] from 0.3 0 to -0.35 0   %
\endpicture.
\end{align}
Thus by the definition of $\tilde{e}_{i_k}$ we have $\tilde{e}_{i_k}^{m_k}(p)\neq 0$ and so, by induction, $p_{k-1}=\tilde{e}_{i_k}^{m_k}(p_k)\neq 0$. Furthermore, by Corollary~\ref{cor:push} and induction, 
$p_{k-1}\in\cP(\vec\la)_{\mu_k+m_k\alpha_{i_k}^{\vee}}=\cP(\vec\la)_{\mu_{k-1}}$ and $\dim(p_{k-1})=\dim(p_k)+m_k=\langle\la+\mu_{k-1},\rho\rangle$. So $p_0\in\cP(\vec\la)_{\mu}$ and $\dim(p_0)=\langle\la+\mu,\rho\rangle$.
\end{proof}

Theorem $\mathbf{C}'$ has some interesting implications.  In the setup of Theorem $\mathbf{S}$ we see that the constant term of $c_{\la\mu}(q^{-1})$ is a positive integer. In the context of Theorem~$\mathbf{B}$, if the building $X$ has finite thickness $q$ (see Appendix~\ref{app:buildings}), then Theorem~$\mathbf{C'}$ gives
bounds on the number of points that chamber retract to $\lambda$ and sector retract to $\mu$. In the setup of Theorem~$\mathbf{G}$, with $k=\CC$, the \emph{affine Grassmannian} is $G/K$ and the
\emph{MV-cycles} of type $\lambda$ and weight $\mu$ are the irreducible components of 
$$
\overline{U^-t_\mu K \cap K t_\lambda K}
\qquad \textrm{(a subvariety of $G/K$)}.
$$
Gaussent and Littelmann \cite[\S11, Corollary~5]{litt} showed that there is a bijection between 
the MV-cycles of type $\lambda$ and weight $\mu$ and the elements of $\cP(\vec\lambda)_\mu$ 
with dimension $\langle \lambda+\mu,\rho\rangle$.  The work of Kamnitzer \cite[Theorem~C]{kamnitzer2} 
established a bijection between the MV-cycles and the MV-polytopes which appeared in the work 
of Anderson~\cite{andersonpoly}.  Kamnitzer \cite[Theorem~3.5]{kamnitzer} describes the MV-polytope corresponding to an MV-cycle in terms of its \emph{string parameters}
and part (c) of the following Corollary determines the string parameters for the MV-polytope
determined by the path $p$ constructed in Theorem~$\mathbf{C}'$.

\begin{cor}\label{cor:implications} Let $\la\in P^+$ and $\mu\in\Pi_{\la}$. 
\begin{enumerate}
\item[(a)] In Theorem~$\mathbf{S}$, the constant term in the polynomial $c_{\la\mu}(q^{-1})$ is a positive integer, and if $q\in \RR$ and $q>1$ then $c_{\la\mu}(q^{-1})\geq(1-q^{-1})^{\ell(w_0)}$.
\item[(b)] In Theorem~$\mathbf{B}$, if $X$ has finite thickness $q$ (see Appendix~\ref{app:buildings}) then
$$
|\{x\in X\mid\rho_{1}(x)\in W_0\la\textrm{ and }\rho_{-\infty}(x)=\mu\}|\geq (q-1)^{\langle\la+\mu,\rho\rangle}.
$$ 
\item[(c)] In Theorem~$\mathbf{G}$ with $k=\CC$, there is an irreducible component of $\overline{U^-t_{\mu}K\cap Kt_{\la}K}$ with dimension $\langle\la+\mu,\rho\rangle$. This $MV$-cycle has string parameters $(m_1,\ldots,m_N)$, where $m_k$ is defined in Lemma~\ref{lem:fall}.
\end{enumerate}
\end{cor}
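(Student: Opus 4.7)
The plan is to derive each of the three parts as a direct translation of Theorem~$\mathbf{C}'$ into the setting of Theorem~$\mathbf{S}$, $\mathbf{B}$ or~$\mathbf{G}$, using the formulae and bijections already assembled in the proof of Proposition~\ref{prop:equiv}. In every case the common input is the distinguished path $p_*\in\cP(\vec\la)_\mu$ of optimal dimension $\langle\la+\mu,\rho\rangle$ produced by the constructive proof of Theorem~$\mathbf{C}'$.

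For (a), I would substitute into Schwer's formula~(\ref{eq:macpath}):
$$c_{\la\mu}(q^{-1})=\sum_{p\in\cP(\vec\la)_\mu}q^{-(\langle\la+\mu,\rho\rangle-\dim(p))}(1-q^{-1})^{f(p)}.$$
By Lemma~\ref{lem:bound} every exponent of $q^{-1}$ appearing is nonnegative, so the coefficient of $q^0$ equals the cardinality of $\{p\in\cP(\vec\la)_\mu : \dim(p)=\langle\la+\mu,\rho\rangle\}$, a positive integer since the set contains $p_*$. When $q>1$ all summands are nonnegative, so discarding everything except $p_*$ gives $c_{\la\mu}(q^{-1})\geq(1-q^{-1})^{f(p_*)}$. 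Remark~\ref{rem:folds} yields $f(p_*)=\ell(\varphi(p_*))\leq\ell(w_0)$, and since $0<1-q^{-1}<1$ the bound $c_{\la\mu}(q^{-1})\geq(1-q^{-1})^{\ell(w_0)}$ follows.

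For (b), I would use the building-theoretic reading of the same formula. As in the proof of Proposition~\ref{prop:equiv}, the set in Theorem~$\mathbf{B}$ is realised by galleries in $X$ starting at $1$ that retract under $\rho_{-\infty}$ to walks in $\cP(\vec\la)_\mu$, and in a building of thickness~$q$ each positively folded walk $p$ lifts to exactly $q^{\epsilon^+(p)}(q-1)^{f(p)}$ such galleries, with distinct end vertices: $q-1$ choices of a chamber off the panel at each fold, and $q$ choices at each positive crossing. Keeping only the contribution of $p_*$, and using $\dim(p_*)=\epsilon^+(p_*)+f(p_*)=\langle\la+\mu,\rho\rangle$ together with $q\geq q-1\geq 1$, gives
$$q^{\epsilon^+(p_*)}(q-1)^{f(p_*)}\geq(q-1)^{\epsilon^+(p_*)+f(p_*)}=(q-1)^{\langle\la+\mu,\rho\rangle},$$
while the remaining summands are nonnegative.

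For (c), I would invoke the Gaussent--Littelmann bijection \cite[\S11, Corollary~5]{litt} between the MV-cycles in $\overline{U^-t_\mu K\cap Kt_\la K}$ and the paths in $\cP(\vec\la)_\mu$ of maximal dimension $\langle\la+\mu,\rho\rangle$: the path $p_*$ corresponds to an MV-cycle of the claimed dimension. To identify its string parameters, recall that $p_*$ was constructed by applying $\tilde{e}_{i_k}^{m_k}$ successively (for $k=N,N-1,\ldots,1$) to the antidominant path $p_{w_0\la}$ along the chosen reduced expression $w_0=s_{i_1}\cdots s_{i_N}$. Comparing this construction with Kamnitzer's crystal-theoretic description \cite[Theorem~3.5]{kamnitzer} of MV-polytopes in terms of string data along the same reduced word identifies the exponents $(m_1,\ldots,m_N)$ with the string parameters of the corresponding MV-cycle. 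The main delicacy I expect is exactly this final matching of conventions between the raising-operator construction used here and the Lusztig/string data in \cite{kamnitzer}; parts (a) and (b) are then clean consequences of Theorem~$\mathbf{C}'$ together with Lemma~\ref{lem:bound} and~(\ref{eq:macpath}).
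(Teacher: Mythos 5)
Your proposal is correct and follows essentially the same route as the paper: part (a) via Schwer's formula~(\ref{eq:macpath}) and the bound $f(p)\leq\ell(\varphi(p))\leq\ell(w_0)$ from Lemma~\ref{lem:bound}, part (b) via the count $q^{\epsilon^+(p)}(q-1)^{f(p)}$ of galleries retracting to the optimal path, and part (c) via the Gaussent--Littelmann correspondence and Kamnitzer's crystal structure on MV-polytopes. The only cosmetic difference is in (c), where the paper first exhibits an explicit dense cell $\CC^{\epsilon^+(p)}\times(\CC^\times)^{f(p)}$ using the labelling of \cite{ramparkinsonschwer} before identifying the component as the MV-cycle of $p$, whereas you appeal to the bijection of \cite{litt} directly; both are legitimate and the paper cites both.
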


\begin{proof} Let $p\in\cP(\vec\la)_{\mu}$ be the path constructed in Theorem~$\mathbf{C}'$. 

(a) The formula~(\ref{eq:macpath}) and the fact that~$p$ has at most $\ell(w_0)$ folds (the proof of Lemma~\ref{lem:bound} gives $f(p)\leq\ell(\varphi(p))$) proves~(a).

(b) The number of galleries with types $\{\vec u\cdot \vec m_{\la}\mid u\in W_0^{\la}\}$ which retract to $p$ under $\rho_{-\infty}$ is $q^{\epsilon^+(p)}(q-1)^{f(p)}\geq(q-1)^{\dim(p)}=(q-1)^{\langle\la+\mu,\rho\rangle}$.

(c) Using the labelling of points in $G/I$ from \cite[Theorem~7.1]{ramparkinsonschwer} 
we can write down a ``cell'' in $U^-t_{\mu}K\cap Kt_{\la}K$ isomorphic to $\CC^{\epsilon^+(p)}\times(\CC^{\times})^{f(p)}$. This gives a dense subset of a $\langle\la+\mu,\rho\rangle$ dimensional irreducible component $z$ of $\overline{U^-t_{\mu}K\cap Kt_{\la}K}$.  The
irreducible component $z$ is the MV-cycle corresponding to $p$ determined by 
\cite[\S11, Theorem~4]{litt}. 
The vertices $\mu_0,\ldots,\mu_N$ from the proof of Theorem~$\mathbf{C}'$ are a subset of the 
vertices of the corresponding MV-polytope and these vertices uniquely determine the polytope. 
The construction $p=\tilde{e}_{i_1}^{m_1}\cdots \tilde{e}_{i_N}^{m_N}(p_{w_0\la})$ in 
the proof of Theorem~$\mathbf{C}'$ can be translated to MV-polytopes by 
using the crystal structure on MV-polytopes from \cite[Theorem~3.5]{kamnitzer} and
applying the root operators to the antidominant polytope.  It follows that 
the string parameters of $z$ are $(m_1,\ldots,m_N)$ (relative to the reduced decomposition $w_0=s_{i_1}\cdots s_{i_{N}}$).
\end{proof}

\begin{Example} \label{museq}
Let us briefly illustrate how the lemma and theorem work in an example. Consider $\la$ and $\mu=\mu_0$ as shown below, and choose the reduced decomposition $w_0=s_1s_2s_1$.
$$\beginpicture
\setcoordinatesystem units <0.75cm,0.75cm>         
\setplotarea x from -5 to 5, y from -4 to 4  
    \color{Gray}
    \plot 2.8 -3.81 3.7 -2.2516 /
    \plot 1.8 -3.81 3.7 -0.52 /
    \plot 0.8 -3.81 3.7 1.212 /
    \plot -0.2 -3.81 3.7 3.044 /
    \plot -1.2 -3.81 3.2 3.81 /
    \plot -2.2 -3.81 2.2 3.81 /
    \plot -3.2 -3.81 1.2 3.81 /
    \plot -3.7 -3.044 0.2 3.81 /
    \plot -3.7 -1.212 -0.8 3.81 /
    \plot -3.7 0.520 -1.8 3.81 /
    \plot -3.7 2.2516 -2.8 3.81 /
    \plot -2.8 -3.81 -3.7 -2.2516 /
    \plot -1.8 -3.81 -3.7 -0.52 /
    \plot -0.8 -3.81 -3.7 1.212 /
    \plot 0.2 -3.81 -3.7 3.044 /
    \plot 1.2 -3.81 -3.2 3.81 /
    \plot 2.2 -3.81 -2.2 3.81 /
    \plot 3.2 -3.81 -1.2 3.81 /
    \plot 3.7 -3.044 -0.2 3.81 /
    \plot 3.7 -1.212 0.8 3.81 /
    \plot 3.7 0.520 1.8 3.81 /
    \plot 3.7 2.2516 2.8 3.81 /
    \plot -3.7 -3.464 3.7 -3.464 /
    \plot -3.7 -2.598 3.7 -2.598 /
    \plot -3.7 -1.732 3.7 -1.732 /
    \plot -3.7 -0.866 3.7 -0.866 /
    \plot -3.7 0 3.7 0 /
    \plot -3.7 3.464 3.7 3.464 /
    \plot -3.7 2.598 3.7 2.598 /
    \plot -3.7 1.732 3.7 1.732 /
    \plot -3.7 0.866 3.7 0.866 /
    \plot -2.2 -3.81 2.2 3.81 /
    \plot 2.2 -3.81 -2.2 3.81 /
    \plot -3.7 0 3.7 0 /
    \color{black}
    \put{$\bullet$} at 0 0
    \put{{\footnotesize$\la$}} at 0.5 3.1
    \put{{\footnotesize$\la_1$}} at 2.5 2
    \put{{\footnotesize$\la_2$}} at -2.95 -0.6
    \put{{\footnotesize$\mu_3=\la_3$}} at 0.47 -3
    \put{{\footnotesize$\mu_2$}} at -1 -2.35
    \put{{\footnotesize$\mu_1$}} at 2.5 0.26
    \put{{\footnotesize$\mu$}} at 0.5 1.3
    \setdashes
    \plot -2.5 0.866 0.5 2.598 2 1.722 2 0 2 -1.722 0.5 -2.598 -2.5 -0.866 -2.5 0.866 / %
    \setsolid
    \color{Gray}
    \arrow <5pt> [.2,.67] from 0 0.577 to 0.5 0.289 %
    \arrow <5pt> [.2,.67] from 0.5 0.289 to 0.5 -0.289 %
    \arrow <5pt> [.2,.67] from 0.5 -0.289 to 0 -0.577 %
    \arrow <5pt> [.2,.67] from 0 -0.577 to 0 -1.155 %
    \arrow <5pt> [.2,.67] from 0 -1.155 to 0.5 -1.443 %
    \arrow <5pt> [.2,.67] from 0.5 -1.443 to 0.5 -2.021 %
    \color{black}
    \arrow <5pt> [.2,.67] from -1 -1.722 to 0.5 -2.598
    \arrow <5pt> [.2,.67] from 2 0 to -1 -1.722
    \arrow <5pt> [.2,.67] from 0.5 0.866 to 2 0
    \endpicture$$
We have $m_1=1$, $m_2=2$, and $m_3=1$. The path constructed in Theorem~$\mathbf{C'}$ is obtained from $p_{w_0\la}$ by performing the root operator $\tilde{e}_1$ once, then the root operator $\tilde{e}_2$ twice, and then the root operator $\tilde{e}_1$ once more.

In this example, choosing the other reduced expression $w_0=s_2s_1s_2$ produces a different path ending at $\mu$. These two paths correspond to the two elements of weight $\mu$ in the crystal of the representation $V(\la)$.
\end{Example}


\begin{appendix}

\section{Alcove walks and buildings}\label{app:buildings}

In this appendix we show how the chamber and sector retractions on regular affine buildings are precisely described by the combinatorics of labelled versions of alcove walks (see Theorem~\ref{thm:retractions}).

\subsection{The Coxeter complex $\Sigma$}

Use the terminology of Section~\ref{subsect:geom} so that $R$ is a reduced irreducible
root system in an $n$ dimensional real vector space $\fh_\RR$, $W=Q\rtimes W_0$ is an affine Weyl group 
and $\tilde{W}=P\rtimes W_0$ is an extended affine Weyl group. The alcoves of the hyperplane arrangement from Section~\ref{subsect:geom} are open geometric simplices. The extreme points of alcoves are \textit{vertices}, and the hyperplane arrangement induces the structure of a simplicial complex $\Sigma=\Sigma(W)$ (the \textit{Coxeter complex} of $W$) with the maximal simplices being the alcoves. The hyperplanes of the hyperplane arrangement form the \textit{walls} of~$\Sigma$. The walls of $\Sigma$ are orientated as in Section~\ref{subsect:alcovewalk}. The $\tilde{W}$-translates of the Weyl chambers are the \textit{sectors} of~$\Sigma$ 
(thus there are infinitely many sectors because they need not be based at~$0$).
The \textit{rank} of $\Sigma$ is $\dim(\fh_\RR)+1=n+1$ (the rank of an arbitrary Coxeter complex is the number of vertices in a maximal simplex).

The vertices of the fundamental alcove $c_0$ of $\Sigma$ are 
$\{0\}\cup\{\omega_i/k_i\mid i=1,\ldots,n\}$, where 
$\theta=k_1\alpha_1+\cdots+k_n\alpha_n$ is the highest root. 
For example, in type $B_2$ (with $\fh_{\RR}=\RR^2$, $\alpha_1=e_1-e_2$, $\alpha_2=e_2$, $\theta=\alpha_1+2\alpha_2$, $\omega_1=e_1$, and $\omega_2=e_1+e_2$) one has the picture
$$
\beginpicture
\setcoordinatesystem units <0.75cm,0.75cm>         
\setplotarea x from -5 to 5, y from -3.5 to 4    
\put{\small{$H_{\alpha_1+\alpha_2}$}}[b] at 0 3.4
\put{\small{$H_{\alpha_1}$}}[b] at 3.4 3.4
\put{\small{$H_{\alpha_1+2\alpha_2}$}}[b] at -3.4 3.4
\put{\small{$H_{\alpha_2}$}}[l] at 3.4 0
\put{\small{$\omega_2$}}[l] at 2.5 2.25
\put{\small{$\omega_1$}}[l] at 2.5 0.25
\put{$c_0$}[b] at 1 0.2
\plot  0  3.2   0 -3.2 /
\plot  -2 3.2   -2 -3.2 /
\plot   2 3.2    2 -3.2 / %
\plot  3.2  0  -3.2  0 /
\plot  3.2 -2   -3.2 -2 /
\plot  3.2  2   -3.2  2 /  %
\plot 3.2 3.2 -3.2 -3.2 /
\plot  1.2 3.2   -3.2 -1.2 /
\plot  -0.8 3.2  -3.2 0.8 / 
\plot  3.2 -0.8   0.8 -3.2 /
\plot  3.2 1.2  -1.2 -3.2 / 
\plot  2.8 -3.2 3.2 -2.8 / 
\plot  -3.2 2.8 -2.8 3.2 / %
\plot  -3.2 3.2 3.2 -3.2 / 
\plot  -3.2 -0.8   -0.8 -3.2 /
\plot  -3.2 1.2   1.2 -3.2 /
\plot  -1.2 3.2   3.2 -1.2 /
\plot  0.8 3.2  3.2 0.8 / 
\plot  -3.2 -2.8 -2.8 -3.2 /
\plot  2.8 3.2 3.2 2.8 / %
\endpicture
$$

Define a 
\textit{type} function $\tau:\{\textrm{vertices of $\Sigma$}\}\to\{0,\ldots,n\}$ by setting $\tau(0)=0$,  $\tau(\omega_i/k_i)=i$ and
requiring each alcove to have exactly one vertex of each type (equivalently, $\tau(wx) = \tau(x)$ for each vertex $x$ and each $w\in W$). For $w\in W$ the alcoves $w$ and $ws_i$ have all vertices in common except for those of type~$i$.

\subsection{Buildings}\label{subsection:buildings}

A \textit{building of type $W$} is a nonempty simplicial complex $X$ with a collection $\cA$ of subcomplexes (the \textit{apartments} of $X$) such that
\begin{enumerate}
	\item[(B1)] If $A\in\cA$ then $A\cong \Sigma$, where $\Sigma$ is the Coxeter complex of $W$,
	\item[(B2)] If $c$ and $d$ are maximal simplices of $X$ then there is $A\in\cA$ such that $c\in A$ and $d\in A$, and
	\item[(B3)] If $A,A'\in\cA$ with $A\cap A'\neq\emptyset$ then there is an isomorphism $\psi:A'\to A$ with $\psi_{A\cap A'}=1$.
	\end{enumerate}
The maximal simplices of $X$ are \textit{chambers}; they correspond to the alcoves of $\Sigma$. By (B1) apartments have \textit{walls} and \textit{sectors}. A \textit{sector} of $X$ is a sector in some apartment of~$X$.  The \textit{rank} of $X$ is the rank of $\Sigma$.

We regard $\Sigma$ as an apartment of $X$ by fixing an apartment $A_0$ and an isomorphism $A_0\to\Sigma$. This induces a type function $\tau:\{\textrm{vertices of $X$}\}\to\{0,1,\ldots,n\}$ such that every chamber of $X$ has exactly one vertex of each type. A \textit{panel} of $X$ is a codimension~1 simplex. Thus a panel contains one vertex of all but one type. This type is the \textit{cotype} of the panel. Chambers $c$ and $d$ are \textit{$i$-adjacent} if $c$ and $d$ share a cotype~$i$ panel. 
$$\beginpicture
\setcoordinatesystem units <0.25cm,0.25cm>         
\setplotarea x from -6 to 6, y from -2 to 9  
	\put{$c$} at -2.8 2.2
    \put{$i$} at -0.5 2
    \plot -5 4  0 -2 /
    \plot 0 5 -5 4 /
    \plot 0 -2 0 5 /
    \plot 0 -2  5 1 /
    \plot 5 1 0 5 /
    \plot 0 5  2 9 /
    \plot 2.917 2.666 5 6 /
    \plot  2 9  1.321 5.264 /
    \plot  0 5  5 6 /
    \plot -2.5926 4.4815  -4 8 /
    \plot  -4 8  0 5 /
    \setdashes
    \plot  0 -2  2.917 2.666 /
    \plot 1.321 5.264  0 -2 /
    \plot 0 -2  -2.5926 4.4815 /
\endpicture$$
A \textit{gallery} of type $\vec w=s_{i_1}\cdots s_{i_{\ell}}$ from $c$ to $d$ is a sequence $c=c_0,c_1,\ldots,c_{\ell}=d$ with $c_{k-1}$ $i_k$-adjacent to $c_k$ and $c_{k-1}\neq c_k$ for each $k=1,\ldots,\ell$. If this gallery has minimal length amongst the galleries from $c$ to $d$ then the element of $W$ given by $\delta(c,d)=s_{i_1}\cdots s_{i_{\ell}}$ does not depend on the particular minimal gallery from $c$ to $d$ chosen. Note that $\delta(c,d)=s_i$ if and only if $c\neq d$ and $c$ and $d$ are $i$-adjacent. A sequence $c=c_0,c_1,\ldots,c_{\ell}=d$ with $c_{k-1}$ $i_k$-adjacent to $c_k$ for each $k=1,\ldots,\ell$ (without imposing the condition $c_{k-1}\neq c_k$) is a \textit{pregallery} of type $\vec w=s_{i_1}\cdots s_{i_{\ell}}$ from $c$ to~$d$.

A building is \textit{thick} if every panel is contained in at least $3$ chambers,
and \textit{regular} if there is a bijection
\begin{align}\label{eq:reg}
\{\textrm{chambers containing $\pi$}\}\longleftrightarrow\{\textrm{chambers containing $\pi'$}\}
\end{align}
whenever $\pi$ and $\pi'$ are panels with the same cotype (we do not assume that this cardinality is finite). The \textit{thickness} of a regular building~$X$ is the $(n+1)$-tuple $(q_0,\ldots,q_n)$, where 
$$
q_i+1=\Card(\{\textrm{chambers containing $\pi_i$}\})
\qquad\textrm{for any cotype~$i$ panel~$\pi_i$.}
$$
In the case that $q_0=\cdots=q_n=q$ we say that $X$ has thickness~$q$.

The following proposition does not require the building to be affine, but as a consequence we see that thick irreducible affine buildings of rank at least $3$ are automatically regular. 

\begin{prop}\label{prop:bijection} Let $W$ be an irreducible Coxeter group and suppose that $s_is_j$ has finite order for all~$i$ and~$j$. Let $X$ be a thick building of type $W$. Then $X$ is regular. 
\end{prop}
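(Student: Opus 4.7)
The plan is to reduce the statement to the rank-2 case by a gallery-connectedness argument, and then to handle thick generalized $m$-gons using opposition and projection in spherical buildings.

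First I would fix two cotype-$i$ panels $\pi$ and $\pi'$ of $X$ and choose chambers $c\supset \pi$ and $c'\supset \pi'$. By axiom (B2) some apartment contains both, so there is a gallery $c=c_0,c_1,\ldots,c_k=c'$. Writing $\pi_r$ for the cotype-$i$ panel of $c_r$ (so $\pi_0=\pi$ and $\pi_k=\pi'$), it suffices to produce a bijection between the chamber-stars $\mathrm{star}(\pi_{r-1})$ and $\mathrm{star}(\pi_r)$ for each adjacent pair and then compose. If $c_{r-1}$ and $c_r$ are $i$-adjacent then $\pi_{r-1}=\pi_r$ and the identity works. Otherwise they are $j$-adjacent for some $j\neq i$, and every chamber containing $\pi_{r-1}$ or $\pi_r$ lies in the $\{i,j\}$-residue $R$ of $c_{r-1}$. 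Since rank-2 residues of buildings are themselves thick buildings, and since $\langle s_i,s_j\rangle$ is finite dihedral by hypothesis, $R$ is a thick generalized $m_{ij}$-gon, and the problem reduces to proving regularity inside $R$.

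The rank-2 case is where I expect the main obstacle to lie. My plan is to invoke the standard opposition machinery of thick spherical buildings. The key ingredients are: every chamber in a thick generalized $m$-gon has at least one \emph{opposite} chamber (one at Weyl distance $w_0$, the longest element of the dihedral group), the projection $\mathrm{proj}_\pi$ of any chamber onto a panel $\pi$ is well-defined, and the identity $\mathrm{proj}_\pi\circ\mathrm{proj}_{\pi'}=\mathrm{id}_{\mathrm{star}(\pi)}$ holds whenever there exist $d\in\mathrm{star}(\pi)$ and $d'\in\mathrm{star}(\pi')$ simultaneously opposite to a common chamber $e$. This identity, together with its mirror, produces mutually inverse bijections $\mathrm{star}(\pi)\leftrightarrow\mathrm{star}(\pi')$. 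Existence of such a common $e$ for arbitrary $\pi,\pi'$ follows from thickness together with an induction on the length of a minimal gallery between $\pi$ and $\pi'$ in $R$.

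The subtlest point is the parity issue when $m_{ij}$ is odd: then a cotype-$i$ panel is opposite (in the bipartite incidence graph of the generalized polygon) to a cotype-$j$ panel rather than to another cotype-$i$ panel, so the direct projection bijection links panels of \emph{different} cotype. I would handle this by composing two opposition bijections through an intermediate cotype-$j$ panel, thereby recovering a bijection between any two cotype-$i$ panels. Once the rank-2 case is settled, the chain of bijections from the first paragraph completes the proof for $X$ of arbitrary rank; irreducibility of $W$ enters only through the hypothesis that every $s_is_j$ has finite order, ensuring that every $\{i,j\}$-residue really is a finite spherical building on which the opposition theory applies.
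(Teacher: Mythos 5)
Your proposal follows essentially the same route as the paper: reduce to the rank-2 case by induction on the gallery distance between chambers containing the two panels, using that each $\{i,j\}$-residue is a thick rank-2 spherical building (a generalized $m_{ij}$-gon, finite by the hypothesis on the orders of the $s_is_j$), and then appeal to regularity of thick generalized polygons. The only difference is that the paper simply cites this rank-2 fact from Ronan (Proposition~3.3), whereas you sketch its standard proof via perspectivities between opposite panels, including the correct handling of the parity issue for odd $m$.
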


\begin{proof} If $X$ has rank $2$ then $W$ is a dihedral group generated by $s_1s_2$, and by hypothesis $s_1s_2$ has finite order, $m$ say. Thus $X$ is a thick rank $2$ spherical building, and it follows from the geometry of \textit{generalised $m$-gons} that $X$ is regular
\cite[Proposition~3.3]{ronan}.

For general rank, let $\pi$ and $\pi'$ be panels with cotype~$i$, and let $c$ and $d$ be chambers of $X$ such that $\pi\subseteq c$ and $\pi'\subseteq d$. 
By induction on the length of $\delta(c,d)$ it suffices to consider the case when $\delta(c,d)=s_j$ for some~$j$. Therefore $c\cap d$ is a cotype~$j$ panel. If $j=i$ then $c\cap d=\pi=\pi'$, and the bijection (\ref{eq:reg}) is trivial. If $j\neq i$ then
$\{c'\mid \delta(c,c')\in \langle s_i,s_j\rangle\}$ is a rank 2 sub-building of spherical type $W_{\{i,j\}}=\langle s_i,s_j\rangle$ \cite[Theorem~3.5]{ronan}. This sub-building contains $c$ and $d$, and so the rank 2 case gives the bijection~(\ref{eq:reg}). 
\end{proof}

\subsection{Sectors and retractions}

There are two distinct types of retractions for affine buildings, 
chamber retractions and sector retractions. See \cite[\S IV.3 and \S VI.8]{brown}.
\begin{enumerate}
\item[$\bullet$]
Let $A$ be an apartment of $X$ and let $c$ be a chamber in~$A$.
The \textit{chamber retraction} $\rho_{A,c}:X\to A$ is defined as follows. Let $d$ be a chamber of  $X$. 
Choose an apartment $A'$ containing $d$ and $c$, and let $\psi:A'\to A$ be 
an isomorphism from (B3) fixing every vertex of $A'\cap A$. 
Then $\rho_{A,c}(d)=\psi(d)$.
\item[$\bullet$] 
Let $A$ be an apartment of $X$ and let $S$ be a sector in~$A$. The \textit{sector retraction} 
$\rho_{A,S}:X\to A$ is defined as follows. Let $d$ be a chamber of $X$. Choose an apartment $A'$ containing $d$ and a subsector of $S$ (see \cite[VI, \S8, Theorem]{brown}), and let $\psi:A'\to A$ be an isomorphism from (B3) fixing every vertex of $A'\cap A$. Then $\rho_{A,S}(d)=\psi(d)$.
\end{enumerate}
Conceptually, $\rho_{A,c}$ ``radially flattens'' the building onto $A$ from the ``centre'' $c$
and $\rho_{A,S}$ flattens the building $X$ onto $A$ from a centre ``deep in the sector $S$''.
If $\delta(c,d)=w$ then $\rho_{A,c}(d)$ is the unique chamber of $A$ with $\delta(c,\rho_{A,c}(d))=w$.

If $\rho\colon X\to A$ is a retraction and $x$ is a vertex in $X$ then 
$\rho(x)$ is determined by $\rho(c)$ for a chamber $c$ which contains $x$. 
For a gallery $c_0,\ldots,c_{\ell}$ in $X$ let $\rho(c_0,\ldots,c_{\ell})$ be the pregallery $\rho(c_0),\ldots,\rho(c_{\ell})$ in~$A$. 

Viewing $\Sigma$ as an apartment of $X$ as in Section~\ref{subsection:buildings}, let
\begin{align}\label{eq:retractiondefn}
\begin{aligned}
\rho_{1}&:X\to\Sigma\quad\textrm{be the chamber retraction $\rho_{1}=\rho_{\Sigma,1}$, and}\\
\rho_{-\infty}&:X\to\Sigma\quad\textrm{be the sector retraction $\rho_{-\infty}=\rho_{\Sigma,w_0C_0}$},
\end{aligned}
\end{align}
where $C_0$ is the fundamental Weyl chamber of $\Sigma$, and 
$w_0$ is the longest element of $W_0$ so that $w_0C_0$ is the ``opposite'' Weyl chamber.

\begin{lemma}\label{lemma:retractions} Let $X$ be an affine building, and let $c$ and $d$ be chambers of $X$ with $\delta(c,d)=s_i$. Suppose that $\rho_{-\infty}(c)=v$. If $v\to vs_i$ is a positive crossing then 
\begin{align}\label{eq:case1}
\rho_{-\infty}\left(\beginpicture
\setcoordinatesystem units <0.8cm,0.8cm>         
\setplotarea x from -0.8 to 0.7, y from -0.5 to 0.5  
\put{$\scriptstyle{c}$}[br] at -0.6 0.1
\put{$\scriptstyle{d}$}[bl] at 0.6 0.1
\plot  0 -0.4  0 0.5 /
\arrow <5pt> [.2,.67] from -0.5 0 to 0.5 0   %
\endpicture\right)\quad=\quad\beginpicture
\setcoordinatesystem units <0.8cm,0.8cm>         
\setplotarea x from -0.8 to 0.7, y from -0.5 to 0.5  
\put{$\scriptstyle{-}$}[b] at -0.4 0.25
\put{$\scriptstyle{+}$}[b] at 0.4 0.25
\put{$\scriptstyle{v}$}[br] at -0.6 0.1
\put{$\scriptstyle{vs_i}$}[bl] at 0.6 0.1
\plot  0 -0.4  0 0.5 /
\arrow <5pt> [.2,.67] from -0.5 0 to 0.5 0   %
\endpicture\qquad\textrm{for all $d$ with $\delta(c,d)=s_i$}.
\end{align}
If $v\to vs_i$ is a negative crossing then there is a unique chamber $d_0$ of $X$ with $\delta(c,d_0)=s_i$ and $\rho_{-\infty}(d_0)=vs_i$, and
\begin{align}\label{eq:case2}
\rho_{-\infty}\left(\beginpicture
\setcoordinatesystem units <0.8cm,0.8cm>         
\setplotarea x from -0.8 to 0.7, y from -0.5 to 0.5  
\put{$\scriptstyle{c}$}[br] at -0.6 0.1
\put{$\scriptstyle{d}$}[bl] at 0.6 0.1
\plot  0 -0.4  0 0.5 /
\arrow <5pt> [.2,.67] from -0.5 0 to 0.5 0   %
\endpicture\right)\quad=\quad\begin{cases}
\beginpicture
\setcoordinatesystem units <0.8cm,0.8cm>         
\setplotarea x from -0.8 to 0.7, y from -0.5 to 0.5  
\put{$\scriptstyle{+}$}[b] at -0.4 0.25
\put{$\scriptstyle{-}$}[b] at 0.4 0.25
\put{$\scriptstyle{v}$}[br] at -0.6 0.1
\put{$\scriptstyle{vs_i}$}[bl] at 0.6 0.1
\plot  0 -0.4  0 0.5 /
\arrow <5pt> [.2,.67] from -0.5 0 to 0.5 0   %
\endpicture,&\textrm{if $d=d_0$,}\\
\beginpicture
\setcoordinatesystem units <0.8cm,0.8cm>         
\setplotarea x from -0.8 to 0.7, y from -0.5 to 0.5  
\put{$\scriptstyle{+}$}[b] at -0.4 0.35
\put{$\scriptstyle{-}$}[b] at 0.4 0.35
\put{$\scriptstyle{v}$}[br] at -0.6 0.1
\put{$\scriptstyle{vs_i}$}[bl] at 0.6 0.1
\plot  0 -0.4  0 0.6 /
\plot -0.5 0  -0.05 0 /
\arrow <5pt> [.2,.67] from -0.05 0.1 to -0.5 0.1   %
\plot -0.05 0 -0.05 0.1 /
\endpicture,&\textrm{if $d\neq d_0$.}
\end{cases}
\end{align}
\end{lemma}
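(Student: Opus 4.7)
The plan is to introduce the ``projection toward a sector'' of a panel and use it to identify which chamber of $X$ containing the common panel $\pi=c\cap d$ retracts to which chamber of $\Sigma$. For a panel $\pi$ of $X$ and a sector $S$ in an apartment of $X$, standard affine building theory (resting on the subsector absorption principle: any two subsectors of $S$ contain a common subsector) produces a unique chamber $\mathrm{proj}_S(\pi)$ that contains $\pi$ and lies in every apartment containing $\pi$ together with some subsector of $S$; uniqueness is because an apartment contains exactly two chambers across any of its panels. I would recall this fact in a short preliminary paragraph.

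Next I would choose an apartment $A_c$ containing $c$ and a subsector of $w_0C_0$, with retraction isomorphism $\phi_c\colon A_c\to\Sigma$ fixing $A_c\cap\Sigma$, so $\phi_c(c)=v$. The unique chamber $c^*\in A_c$ distinct from $c$ and $i$-adjacent to $c$ shares with $c$ the unique cotype $i$ panel of $c$, which is precisely $\pi$, and $\phi_c(c^*)=vs_i$. Transporting the orientation of Section~\ref{subsect:alcovewalk} back to $A_c$ via $\phi_c$, the positive side of the common wall is the side meeting $C_0$, hence the side \emph{not} meeting the chosen subsector of $w_0C_0$. Consequently $c$ lies on the subsector side of $\pi$ exactly when $v\to vs_i$ is positive, and $c^*$ lies on the subsector side exactly when $v\to vs_i$ is negative, giving $c=\mathrm{proj}_{w_0C_0}(\pi)$ in the positive case and $c^*=\mathrm{proj}_{w_0C_0}(\pi)$ in the negative case. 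This orientation bookkeeping is the step I expect to be the main obstacle, since it requires matching a purely combinatorial convention (positive side contains $C_0$) with the geometric split ``closer to / farther from $w_0C_0$'' of $\pi$ inside $A_c$.

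The two cases of the lemma then drop out. In Case~1, for any $d$ with $\delta(c,d)=s_i$ I pick an apartment $A_d$ containing $d$ and a subsector of $w_0C_0$; by the projection property $A_d$ also contains $c$, so $c$ and $d$ are the two chambers of $A_d$ sharing $\pi$, and the retraction isomorphism $\phi_d$ sends $c\mapsto v$ and hence $d\mapsto vs_i$, establishing \eqref{eq:case1}. In Case~2, set $d_0:=c^*=\mathrm{proj}_{w_0C_0}(\pi)$; the apartment $A_c$ already gives $\rho_{-\infty}(d_0)=\phi_c(c^*)=vs_i$. For any $d\ne d_0$ with $\delta(c,d)=s_i$, an apartment $A_d$ containing $d$ and a subsector of $w_0C_0$ must contain $d_0$ (again by the projection property), so $d$ and $d_0$ are the two chambers of $A_d$ across $\pi$; then $\phi_d(d_0)=vs_i$ forces $\phi_d(d)=v$, producing the fold in \eqref{eq:case2}. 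Uniqueness of $d_0$ is exactly uniqueness of the projection.
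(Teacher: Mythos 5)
Your argument is correct and follows essentially the same route as the paper's proof: you identify the distinguished chamber of the panel $\pi=c\cap d$ as the one lying in the half-apartment on the $w_0C_0$ side of the wall through $\pi$, match that geometric dichotomy against the positive/negative orientation of the crossing $v\to vs_i$ exactly as the paper does, and then compute $\rho_{-\infty}(d)$ from an apartment containing $d$, a subsector of $w_0C_0$, and the relevant reference chamber ($c$ in the positive case, $d_0$ in the negative case). The only substantive difference is the standard fact used to produce that apartment: you package it as the projection $\mathrm{proj}_{w_0C_0}(\pi)$ of the panel toward the sector-germ (whose existence rests on subsector absorption together with the gate property of panels), whereas the paper instead extends the half-apartment $A^-$ to an apartment containing $A^-\cup d$ via \cite[Theorem~3.6]{ronan}; the two inputs are interchangeable here and yield identical case analyses.
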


\begin{proof} Suppose that $\delta(c,d)=s_i$ and $\rho_{-\infty}(c)=v$. Let $A$ be an apartment containing $c$ and a subsector $S$ of $w_0C_0$. Let $H$ be the wall of $A$ determined by the cotype $i$ panel of $c$. Thus $H$ divides $A$ into two ``half apartments'' $A^-$ and $A^+$, with $A^-\cap A^+=H$. Let $A^-$ be the half apartment which contains a subsector of~$S$. (See the diagram below).

Let $\psi:A\to\Sigma$ be an isomorphism from (B3) fixing $A\cap \Sigma$. The negative side (cf. Section~\ref{subsect:alcovewalk}) of the hyperplane $\psi(H)$ of $\Sigma$ is $\psi(A^-)$, because $\psi(A^-)$ contains a subsector of $w_0C_0$. If $v\to vs_i$ is a positive crossing then $v=\rho_{-\infty}(c)=\psi(c)$ is on the negative side of $\psi(H)$ (see (\ref{eq:symbols})), and therefore $c\in A^-$. If $v\to vs_i$ is a negative crossing then $c\in A^+$. These two cases are illustrated.
$$\beginpicture
    \setcoordinatesystem units <0.4cm,0.3cm>         
    \setplotarea x from -9 to 9, y from -4.5 to 4.5  
    \plot -9 -3 -3 3 9 3 3 -3 -9 -3 /
    \plot -1 -1 -3 -1 -2 -2 /
    \plot -1 -1 0 -2 -2 -2 /
    \plot -1 -1 -2.5 0 -2 -2 /
    \put{\small{$A^+$}} at -6 -4.5
    \put{\small{$A^-$}} at 0 -4.5
    \put{\small{$H$}} at -3.5 -4.5
    \put{\small{$S$}} at -0.8 2
    \put{\small{$c$}} at 0.5 -2
    \put{\small{$d$}} at -5 -0.5  
    \put{\small{$A$}} at 6 -2 
    \arrow <4pt> [.2,.67] from -4.5 -0.4 to -2.9 0   %
    \arrow <4pt> [.2,.67] from -4.5 -0.7 to -3.3 -1   %
    \setplotsymbol({\tiny{$\bullet$}})
    \plot -3 1 0 4 /
    \plot -3 1 8 1 /
    \plot -3.5 -3.5 4 4 /
    \hshade 1 1 7   3 3 9 /
    \put{(The case $c\in A^-$)} at -1 -6.5
    \endpicture\quad
    \beginpicture
    \setcoordinatesystem units <0.4cm,0.3cm>         
    \setplotarea x from -9 to 9, y from -4.5 to 4.5  
    \plot -9 -3 -3 3 9 3 3 -3 -9 -3 /
    \plot -1 -1 -3 -1 -2 -2 /
    \plot -1 -1 0 -2 -2 -2 /
    \plot -1 -1 -2.5 0 -2 -2 /
    \put{\small{$A^+$}} at -6 -4.5
    \put{\small{$A^-$}} at 0 -4.5
    \put{\small{$H$}} at -3.5 -4.5
    \put{\small{$S$}} at -0.8 2
    \put{\small{$d$}} at -5 0  
    \arrow <4pt> [.2,.67] from -4.5 0 to -2.9 0   %
    \put{\small{$d_0$}} at 0.7 -2 
    \put{\small{$c$}} at -3.5 -1.6 
    \put{\small{$A$}} at 6 -2 
    \setplotsymbol({\tiny{$\bullet$}})
    \plot -3 1 0 4 /
    \plot -3 1 8 1 /
    \plot -3.5 -3.5 4 4 /
    \hshade 1 1 7   3 3 9 /
    \put{(The case $c\in A^+$)} at -1 -6.5
    \endpicture
$$
If $c\in A^-$ then there is an apartment $A_1$ which contains $A^-\cup d$ (see \cite[Theorem~3.6]{ronan}). In particular $A_1$ contains a subsector of $S$, and thus a subsector of $w_0C_0$, and so $\rho_{-\infty}(d)=\psi_1(d)$ where $\psi_1:A_1\to \Sigma$ is any isomorphism fixing $A_1\cap \Sigma$. But $\psi_1(c)=\rho_{-\infty}(c)=v$, and so since $\psi_1(d)$ must be $i$-adjacent to (and distinct from) $\psi_1(c)$ we have $\rho_{-\infty}(d)=vs_i$, verifying~(\ref{eq:case1}).

Suppose that $c\in A^+$. If $d=d_0$ is the unique chamber of $A$ with $\delta(c,d_0)=s_i$ then $\rho_{-\infty}(d_0)=\psi(d)$ (where $\psi:A\to\Sigma$ is as above). But $\psi(d_0)$ must be $i$-adjacent to (and distinct from) $\psi(c)=\rho_{-\infty}(c)=v$, and so $\rho_{-\infty}(d_0)=vs_i$, verifying the first case of (\ref{eq:case2}). If $d\neq d_0$, then by analysis of the $c\in A^-$ case (with $d_0$ playing the role of $c$) we have $\rho_{-\infty}(d)=v=\rho_{-\infty}(c)$, verifying the second case of (\ref{eq:case2}).
\end{proof}

Suppose that $X$ is a regular affine building, and let $c$ be a chamber of $X$. 
Let $\mathbf{q}_i$ be an index set labelling the chambers $d$ such that 
$\delta(c,d)=s_i$ which appear in Lemma~\ref{lemma:retractions}. 
Assume that $\mathbf{q}_i$ has a distinguished element~$0$, 
which is the label for the chamber $d_0$ from Lemma~\ref{lemma:retractions}. Thus,
\begin{align}\label{eq:ret}
\textrm{if}\quad\rho_{-\infty}\bigg(\beginpicture
\setcoordinatesystem units <0.8cm,0.8cm>         
\setplotarea x from -0.8 to 0.7, y from -0.5 to 0.5  
\put{$\scriptstyle{i}$}[b] at 0 0.6
\put{$\scriptstyle{c}$}[br] at -0.6 0.1
\put{$\scriptstyle{d}$}[bl] at 0.6 0.1
\plot  0 -0.4  0 0.5 /
\arrow <5pt> [.2,.67] from -0.5 0 to 0.5 0   %
\endpicture\bigg)=\begin{cases}
\beginpicture
\setcoordinatesystem units <0.8cm,0.8cm>         
\setplotarea x from -0.8 to 0.7, y from -0.5 to 0.5  
\put{$\scriptstyle{-}$}[b] at -0.4 0.25
\put{$\scriptstyle{+}$}[b] at 0.4 0.25
\put{$\scriptstyle{v}$}[r] at -0.6 0.1
\put{$\scriptstyle{vs_i}$}[l] at 0.6 0.1
\plot  0 -0.4  0 0.5 /
\arrow <5pt> [.2,.67] from -0.5 0 to 0.5 0   %
\endpicture,&\quad\textrm{then}\quad z\in \mathbf{q}_i\\
\beginpicture
\setcoordinatesystem units <0.8cm,0.8cm>         
\setplotarea x from -0.8 to 0.7, y from -0.5 to 0.5  
\put{$\scriptstyle{+}$}[b] at -0.4 0.25
\put{$\scriptstyle{-}$}[b] at 0.4 0.25
\put{$\scriptstyle{v}$}[r] at -0.6 0.1
\put{$\scriptstyle{vs_i}$}[l] at 0.6 0.1
\plot  0 -0.4  0 0.5 /
\arrow <5pt> [.2,.67] from -0.5 0 to 0.5 0   %
\endpicture,&\quad\textrm{then}\quad z=0\\
\beginpicture
\setcoordinatesystem units <0.8cm,0.8cm>         
\setplotarea x from -0.8 to 0.7, y from -0.5 to 0.5  
\put{$\scriptstyle{+}$}[b] at -0.4 0.35
\put{$\scriptstyle{-}$}[b] at 0.4 0.35
\put{$\scriptstyle{v}$}[r] at -0.6 0.1
\put{$\scriptstyle{vs_i}$}[l] at 0.6 0.1
\plot  0 -0.4  0 0.6 /
\plot -0.5 0  -0.05 0 /
\arrow <5pt> [.2,.67] from -0.05 0.1 to -0.5 0.1   %
\plot -0.05 0 -0.05 0.1 /
\endpicture,&\quad\textrm{then}\quad z\in \mathbf{q}_i\backslash\{0\},
\end{cases}
\end{align}
where $v = \rho_{-\infty}(c)$ and $z$ is the label of $d$.
The same set $\mathbf{q}_i$ can be used for every chamber $c$ (by regularity). Note that if $|\mathbf{q}_i|=1$ (that is, the cotype $i$ panels of $X$ are ``thin'') then $\mathbf{q}_i=\{0\}$, and so there will be no $i$-folds.

\begin{prop}\label{prop:b} Let $X$ be a regular affine building, and let the sets $\mathbf{q}_i$ be as defined above. Let $\vec w=s_{i_1}\cdots s_{i_{\ell}}$ be a reduced expression, and let $v\in W$. Then the set
$$
\cG(\vec w)_v=\left\{\begin{matrix}\textrm{galleries $c_0,\ldots,c_{\ell}$ in $X$ of type $\vec w$}\\
\textrm{with initial chamber $c_0=1\in\Sigma$}\end{matrix}\,\,\bigg|\,\,\rho_{-\infty}(c_{\ell})=v\right\}
$$
is in bijection with the set of all $\mathbf{q}$-labelled positively folded alcove walks of type~$\vec w$ with end alcove~$v$, where a $\mathbf{q}$-labelled alcove walk of type~$\vec w$ (starting at $1$) is made up of the symbols
\begin{align*}
\beginpicture
\setcoordinatesystem units <0.8cm,0.8cm>         
\setplotarea x from -0.8 to 0.7, y from -0.5 to 0.5  
\put{$\scriptstyle{-}$}[b] at -0.4 0.25
\put{$\scriptstyle{+}$}[b] at 0.4 0.25
\put{$\scriptstyle{z}$}[tl] at 0.2 -0.3
\put{$\scriptstyle{x}$}[br] at -0.6 0.1
\put{$\scriptstyle{xs_i}$}[bl] at 0.6 0.1
\plot  0 -0.4  0 0.5 /
\arrow <5pt> [.2,.67] from -0.5 0 to 0.5 0   %
\put{with $z\in \mathbf{q}_i$} at 0 -1
\endpicture
\qquad\qquad
\beginpicture
\setcoordinatesystem units <0.8cm,0.8cm>         
\setplotarea x from -0.8 to 0.7, y from -0.5 to 0.5  
\put{$\scriptstyle{-}$}[b] at -0.4 0.35
\put{$\scriptstyle{+}$}[b] at 0.4 0.35
\put{$\scriptstyle{z}$}[tl] at 0.2 -0.3
\put{$\scriptstyle{x}$}[bl] at 0.6 0.1
\put{$\scriptstyle{xs_i}$}[br] at -0.6 0.1
\plot  0 -0.4  0 0.6 /
\plot 0.5 0  0.05 0 /
\arrow <5pt> [.2,.67] from 0.05 0.1 to 0.5 0.1   %
\plot 0.05 0 0.05 0.1 /
\put{with $z\in \mathbf{q}_i\backslash\{0\}$} at 0 -1
\endpicture
\qquad\quad\textrm{and}\qquad\qquad
\beginpicture
\setcoordinatesystem units <0.8cm,0.8cm>         
\setplotarea x from -0.8 to 0.7, y from -0.5 to 0.5  
\put{$\scriptstyle{-}$}[b] at -0.4 0.25
\put{$\scriptstyle{+}$}[b] at 0.4 0.25
\put{$\scriptstyle{z}$}[tl] at 0.2 -0.3
\put{$\scriptstyle{x}$}[bl] at 0.6 0.1
\put{$\scriptstyle{xs_i}$}[br] at -0.6 0.1
\plot  0 -0.4  0 0.5 /
\arrow <5pt> [.2,.67] from 0.5 0 to -0.5 0   %
\put{with $z=0$} at 0 -1 
\endpicture
\end{align*}
where the $k$th step has $i=i_k$ for $k=1,\ldots,\ell$. (If $|\mathbf{q}_i|=1$ then there are no $i$-folds).
\end{prop}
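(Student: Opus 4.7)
The plan is to build the bijection step by step, reading it off directly from Lemma~\ref{lemma:retractions}. First, using the regularity of $X$, I fix once and for all, for every chamber $c$ of $X$ and every generator $s_i$, a bijection between $\mathbf{q}_i$ and the set $\{d\in X : \delta(c,d) = s_i\}$. The one constraint on this choice is that whenever $v=\rho_{-\infty}(c)$ and the step $v\to vs_i$ is a negative crossing in $\Sigma$, the distinguished element $0 \in \mathbf{q}_i$ is required to correspond to the unique chamber $d_0$ produced by Lemma~\ref{lemma:retractions}; since in the positive-crossing case all $i$-neighbours of $c$ are on equal footing, regularity leaves more than enough freedom to make such a compatible system of labellings.

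The forward map sends a gallery $(c_0=1,c_1,\ldots,c_\ell)\in\cG(\vec w)_v$ to the walk whose $k$-th step is determined by the pair $(\rho_{-\infty}(c_{k-1}),\rho_{-\infty}(c_k))$ via the three possibilities in Lemma~\ref{lemma:retractions}, with label $z_k\in\mathbf{q}_{i_k}$ equal to the label of $c_k$ under the bijection attached to $c_{k-1}$. The key point is that the trichotomy in~(\ref{eq:ret}) exactly matches the three symbols of a $\mathbf{q}$-labelled positively folded alcove walk together with the respective label constraints: a positive crossing in $\Sigma$ admits any label in $\mathbf{q}_{i_k}$; a negative crossing in $\Sigma$ yields a negative crossing when $c_k=d_0$ (so $z_k=0$) and a positive fold otherwise (so $z_k\in\mathbf{q}_{i_k}\setminus\{0\}$). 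The resulting walk ends at $\rho_{-\infty}(c_\ell)=v$ by definition.

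The inverse map builds the gallery inductively: set $c_0=1$, and given $c_{k-1}$ and the label $z_k$ of the $k$-th step of the walk, let $c_k$ be the chamber $i_k$-adjacent to $c_{k-1}$ whose label under the bijection attached to $c_{k-1}$ equals $z_k$. Since $\mathbf{q}_{i_k}$ indexes chambers $d$ with $\delta(c_{k-1},d)=s_{i_k}$ (so $d\neq c_{k-1}$), the sequence really is a gallery of type $\vec w$. That this map is a two-sided inverse to the forward map reduces to checking, step by step, that $\rho_{-\infty}(c_k)$ agrees with the endpoint of the $k$-th step of the walk; this is immediate from Lemma~\ref{lemma:retractions} together with the compatibility requirement built into our labelling. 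The only subtlety worth highlighting is the negative-crossing case: the label $0$ acts as a switch that selects the unique neighbour $d_0$ which continues the walk across the hyperplane, while every other label $z\in\mathbf{q}_{i_k}\setminus\{0\}$ produces a positive fold—this is exactly what the definition of $\mathbf{q}$-labelled walks requires, so no further case analysis is needed.
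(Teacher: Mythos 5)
Your proof is correct and follows essentially the same route as the paper's: both rest entirely on Lemma~\ref{lemma:retractions} together with the labelling convention~(\ref{eq:ret}), with the label $0$ reserved for the distinguished chamber $d_0$ in the negative-crossing case. The only (harmless) difference is one of presentation: the paper first notes that for thick buildings $\rho_{-\infty}$ maps $\cG(\vec w)_v$ onto the unlabelled walks and then adds labels to correct for thin panels, whereas you construct the labelled bijection and its step-by-step inverse directly, which handles the thin-panel case ($|\mathbf{q}_i|=1$, hence no $i$-folds) automatically.
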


\begin{proof} By Lemma~\ref{lemma:retractions}, if $X$ is thick then $\rho_{-\infty}(\cG(\vec w)_v)=\cP(\vec w)_v$ (the set of positively folded alcove walks of type $\vec w$ with end alcove~$v$). By labelling the paths of $\cP(\vec w)_v$ and using (\ref{eq:ret}) we obtain the bijection of Proposition~\ref{prop:b} (the labelling also corrects for the possibility that the building has thin panels).
\end{proof}

\begin{thm}\label{thm:retractions} Let $X$ be a regular affine building of irreducible type and let $\mathbf{q}_i$ be as defined just above (\ref{eq:ret}). For $\la\in P^+$ and $\mu\in P$ define
$$
\cP_\mathbf{q}(\vec\la)_{\mu}=\left\{\begin{matrix}\textrm{$\mathbf{q}$-labelled positively folded alcove walks}\\
\textrm{of type $\vec u\cdot\vec m_{\la}$ with end alcove in $t_{\mu}W_0$}\end{matrix} \,\,\bigg|\,\, u\in W_0^{\la}\right\},
$$
where, as in the statement of Theorem~$\mathbf{C}$, $\vec u\cdot \vec m_\lambda$ is a minimal length walk to
$ut_\lambda W_0$.  Then there is a bijection
$$
\cP_{\mathbf{q}}(\vec\la)_{\mu}\longleftrightarrow \{x\in X\mid \rho_{1}(x)\in W_0\la\textrm{ and }\rho_{-\infty}(x)=\mu\},
$$
where the retractions $\rho_1,\rho_{-\infty}:X\to\Sigma$ are as in (\ref{eq:retractiondefn}).
\end{thm}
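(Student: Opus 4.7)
The plan is to factor the bijection through galleries in $X$, reducing to Proposition~\ref{prop:b}. For each $u\in W_0^{\la}$ fix a reduced expression $\vec u\cdot\vec m_{\la}$ for $um_{\la}\in\tilde W$, and let $\cG(\vec u\cdot\vec m_{\la})$ be the set of galleries in $X$ of that type starting at $1$; set $\cG(\vec\la)=\bigsqcup_{u\in W_0^{\la}}\cG(\vec u\cdot\vec m_{\la})$. The desired bijection will be built as the composition
$$
\cP_{\mathbf q}(\vec\la)_{\mu}\;\stackrel{(P)}{\longleftrightarrow}\;\{\gamma\in\cG(\vec\la)\mid\rho_{-\infty}(\mathrm{end}(\gamma))\in t_{\mu}W_0\}\;\stackrel{(V)}{\longleftrightarrow}\;\{x\in X\mid\rho_1(x)\in W_0\la,\ \rho_{-\infty}(x)=\mu\},
$$
where (P) comes from Proposition~\ref{prop:b} and (V) is an end-vertex correspondence.

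For (V), first observe that the expression $\vec u\cdot\vec m_{\la}$ is reduced in $\tilde W$: this uses that $u\in W_0^{\la}$ is a minimum-length representative for $W_0/W_{0\la}$ and that $m_{\la}$ is the minimum-length element of $W_0t_{\la}W_0$. Hence a gallery of this type from $1$ is a minimum-length gallery to its end chamber $c_{\ell}$, giving $\rho_1(c_{\ell})=um_{\la}$ as an alcove of $\Sigma$. Since retractions preserve vertex types, the distinguished vertex $x$ of $c_{\ell}$ (the vertex of type $\tau(\la)$) retracts to the distinguished vertex of $um_{\la}$ in $\Sigma$, which is $u\la\in W_0\la$. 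Conversely, any vertex $y\in X$ with $\rho_1(y)=u\la$ for the unique $u\in W_0^{\la}$ lies in a unique chamber $c_y=\mathrm{proj}_y(1)$ of the residue at $y$ at minimum gallery distance from $1$, and this chamber satisfies $\delta(1,c_y)=um_{\la}$; reducedness of the type then yields a unique gallery of type $\vec u\cdot\vec m_{\la}$ from $1$ to $c_y$. Finally, since $\mu\in\Pi_{\la}\subseteq\la+Q$ gives $\tau(\mu)=\tau(\la)$ and $\rho_{-\infty}$ preserves types, the condition $\rho_{-\infty}(x)=\mu$ is equivalent to $\rho_{-\infty}(c_{\ell})\in t_{\mu}W_0$.

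For (P), Proposition~\ref{prop:b} gives, for each $v\in\tilde W$ and each $u\in W_0^{\la}$, a bijection between the galleries in $\cG(\vec u\cdot\vec m_{\la})$ with $\rho_{-\infty}(\mathrm{end}(\gamma))=v$ and the $\mathbf q$-labelled positively folded alcove walks of type $\vec u\cdot\vec m_{\la}$ with end alcove $v$. Taking the union over $v\in t_{\mu}W_0$ and $u\in W_0^{\la}$ yields (P), and composing with (V) completes the proof.

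The main obstacle is (V), and specifically the projection step: showing that for any vertex $y\in X$ with $\rho_1(y)=u\la$, the chamber $\mathrm{proj}_y(1)$ satisfies $\delta(1,\mathrm{proj}_y(1))=um_{\la}$. Equivalently, $um_{\la}$ must be identified as the distinguished minimum-length representative (in $\tilde W$) of the coset indexing chambers of $\Sigma$ containing $u\la$. This is a combinatorial identity in the extended affine Weyl group that is complicated by the fact that $m_{\la}\in\tilde W\setminus W$ in general; the sheet structure $\tilde W=W\rtimes\Omega$ from Section~\ref{subsect:geom} is needed to handle this correctly, with the length function on $\tilde W$ providing the correct notion of minimality.
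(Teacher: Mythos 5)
Your proposal is correct and follows essentially the same route as the paper: the paper also factors the bijection through galleries of type $\vec u\cdot\vec m_{\la}$ starting at $1$, uses Lemma~\ref{lemma:retractions} and the labelling of Proposition~\ref{prop:b} for the walk--gallery correspondence, handles the $m_{\la}\notin W$ issue by passing to $\tilde{X}=X\times\Omega$, and then maps galleries to their end vertices. Your step (V) is in fact spelled out in more detail than the paper's one-line assertion of the end-vertex bijection, and you correctly identify the sheet structure as the point requiring care.
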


\begin{proof} Recall that $\tilde{W}=W\rtimes \Omega$, and that an alcove walk of type $s_{i_1}\cdots s_{i_{\ell}}\gamma$ is really an alcove walk in $\fh_{\RR}\times\Omega$. It is convenient to define a similar notion for buildings: Let $\tilde{X}=X\times\Omega$. A gallery of type $s_{i_1}\cdots s_{i_{\ell}}\gamma$ in $\tilde{X}$ with initial chamber $(c_0,\gamma_0)$ is a sequence
$
(c_0,\gamma_0),\ldots,(c_{\ell},\gamma_0),(c_{\ell},\gamma_0\gamma)$ with $c_0,\ldots,c_{\ell}$ a gallery of type $s_{i_1}\cdots s_{i_{\ell}}$ in $X$. The retraction $\rho_{-\infty}:X\to\Sigma$ gives a retraction $\tilde{\rho}_{-\infty}:\tilde{X}\to\Sigma\times\Omega$ by $\tilde{\rho}_{-\infty}(c,\gamma)=(\rho_{-\infty}(c),\gamma)$.

If $X$ is thick, then by Lemma~\ref{lemma:retractions} we have $\tilde{\rho}_{-\infty}(\cG(\vec\la)_{\mu})=\cP(\vec\la)_{\mu}$, where $\cP(\vec\la)_{\mu}$ is as in Theorem~$\mathbf{C}$ and
$$
\cG(\vec\la)_{\mu}=\left\{\begin{matrix}\textrm{galleries $\tilde{c}_0,\ldots,\tilde{c}_{\ell}$ in $\tilde{X}$ of type $\vec u\cdot\vec m_{\la}$}\\
\textrm{with initial chamber $\tilde{c}_0=(1,1)\in\Sigma\times \Omega$}\end{matrix}\,\,\bigg|\,\,u\in W_0^{\la}\textrm{ and }\tilde{\rho}_{-\infty}(\tilde{c}_{\ell})\in t_{\mu}W_0\right\}.
$$ 
By labelling the paths in $\cP(\vec\la)_{\mu}$ we obtain a bijection $\cG(\vec\la)_{\mu}\leftrightarrow\cP_{\mathbf{q}}(\vec\la)_{\mu}$, and as in Proposition~\ref{prop:b} this labelling corrects for the possibility of thin panels. Mapping galleries in $\tilde{X}$ to their end vertices and then projecting $\tilde{X}\to X$ gives a bijection 
$$
\cG(\vec\la)_{\mu}\leftrightarrow\{x\in X\mid \rho_1(x)\in W_0\la\textrm{ and }\rho_{-\infty}(x)=\mu\},
$$ 
completing the proof.
\end{proof}


\section{The path formula for $P_{\la}(x,q^{-1})$}\label{app:pathformula}

Since our set $\cP(\vec\la)_{\mu}$ is slightly different from the set used in \cite{ram2} and \cite{schwer}, we provide a condensed sketch of the derivation of formula (\ref{eq:macpath}), following~\cite{ram} and~\cite{ram2}. The \textit{affine Hecke algebra} $\cH$ is the algebra over $\ZZ[q^{\frac{1}{2}},q^{-\frac{1}{2}}]$ with generators $T_w$, $w\in\tilde{W}$, and relations
\begin{align*}
T_{u}T_v&=T_{uv}&&\textrm{if $\ell(uv)=\ell(u)+\ell(v)$, and}\\
T_wT_{s_j}&=T_{ws_j}+(q^{\frac{1}{2}}-q^{-\frac{1}{2}})T_w&&\textrm{if $\ell(ws_j)<\ell(w)$.}
\end{align*}
For any choice of expression $\vec
v=s_{i_1}\cdots s_{i_\ell}\gamma$ (not necessarily reduced, with $\gamma\in\Omega$), let
\begin{align*}
x_{v}=T_{s_{i_1}}^{\epsilon_1}\cdots
T_{s_{i_\ell}}^{\epsilon_\ell}T_{\gamma}\qquad\textrm{where}\qquad\epsilon_k
=\begin{cases} +1, &\textrm{if the $k$th step of $\vec v$ is
$\beginpicture
    \setcoordinatesystem units <0.8cm,0.8cm>         
    \setplotarea x from -0.8 to 0.8, y from -0.5 to 0.5  
    \put{$\scriptstyle{-}$}[b] at -0.4 0.25
    \put{$\scriptstyle{+}$}[b] at 0.4 0.25
    \plot  0 -0.4  0 0.5 /
    \arrow <5pt> [.2,.67] from -0.5 0 to 0.5 0   %
\endpicture$,}\\
-1, &\textrm{if the $k$th step of $\vec v$ is
$\beginpicture
    \setcoordinatesystem units <0.8cm,0.8cm>         
    \setplotarea x from -0.8 to 0.7, y from -0.5 to 0.5  
    \put{$\scriptstyle{-}$}[b] at -0.4 0.25
    \put{$\scriptstyle{+}$}[b] at 0.4 0.25
    \plot  0 -0.4  0 0.5 /
    \arrow <5pt> [.2,.67] from 0.5 0 to -0.5 0   %
\endpicture$\,.}
\end{cases}
\end{align*}
This does not depend on the expression for $v$ (see \cite[Theorem~3.1.1]{ulrich}) 
and $\{x_v\mid v\in\tilde{W}\}$ gives another basis of $\cH$. 
If $\vec w = s_{i_1}\cdots s_{i_\ell}\gamma$ is a reduced expression for $w$
then $T_w = T_{s_{i_1}}\cdots T_{s_{i_\ell}}T_{\gamma}$.
The relation $T_{s_i} = T_{s_i}^{-1}+(q^{\frac12} - q^{-\frac12})$ and an induction on $\ell(w)$ give that
\begin{align}\label{eq:walk}
T_{w}=\sum_{p\in P(\vec w)}(q^{\frac{1}{2}}-q^{-\frac{1}{2}})^{f(p)}x_{\mathrm{end}(p)}
=\sum_{p\in \cP(\vec w)} q^{\frac{f(p)}{2}}(1-q^{-1})^{f(p)}x_{\mathrm{end}(p)},
\end{align}
where the sum is over all positively folded alcove walks of type~$\vec w = s_{i_1}\cdots s_{i_\ell}\gamma$
and $\mathrm{end}(p)$ is the alcove where $p$ ends.

For $\mu\in P$ let $x^{\mu}=x_{t_{\mu}}$. Then $x^{\la}x^{\mu}=x^{\la+\mu}=x^{\mu}x^{\la}$ for all $\la,\mu\in P$. If
$$\mathbf{1}_0=\sum_{w\in W_0}q^{\frac{\ell(w)}{2}}T_w,\qquad\textrm{then $T_w\mathbf{1}_0=\mathbf{1}_0T_w=q^{\frac{\ell(w)}{2}}\mathbf{1}_0$ for all $w\in W_0$.}
$$
 The \textit{spherical Hecke algebra} is $\mathbf{1}_0\cH\mathbf{1}_0$. (Note that the spherical Hecke algebra is \textit{not} the Hecke algebra of the spherical Weyl group $W_0$). Since each $w\in\tilde{W}$ can be written as $w=ut_{\la}v$ with $\la\in P^+$ and $u,v\in W_0$ it follows that $\{\mathbf{1}_0x^{\la}\mathbf{1}_0\mid\la\in P^+\}$ is a basis of~$\mathbf{1}_0\cH\mathbf{1}_0$.

\begin{proof}[Proof of formula (\ref{eq:macpath})] After appropriate conversions in notations to match our definitions,
\cite[Theorem~2.9(a)]{ram} says that,
for $\lambda\in P^+$, the definition of $P_\lambda(x,q^{-1})$ in (\ref{eq:Pladefn}) is equivalent
to 
$$
P_{\la}(x,q^{-1})\mathbf{1}_0=
\frac{q^{-\ell(w_0)}}{W_{0\lambda}(q^{-1})}
\mathbf{1}_0 x^\lambda 
\mathbf{1}_0.
$$
Thus, by the definition of $\mathbf{1}_0$ and the fact that each $w\in W_0$ has a unique expression as $w=uv$ with $u\in W_0^{\la}$ and $v\in W_{0\la}$, and moreover $\ell(uv)=\ell(u)+\ell(v)$, we have
\begin{align*}
P_\lambda(x,q^{-1})\mathbf{1}_0
&= \frac{q^{-\ell(w_0)}} {W_{0\lambda}(q^{-1})} 
\sum_{u\in W_0^\lambda}\sum_{v\in W_{0\lambda}^{\vphantom{\la}}} q^{\frac12(\ell(u)+\ell(v))} T_uT_v
x^\lambda \mathbf{1}_0 \\
&= \frac{q^{-\ell(w_0)}} {W_{0\lambda}(q^{-1})} W_{0\lambda}(q) 
\sum_{u\in W_0^\lambda} q^{\frac12\ell(u)} T_u
x^\lambda \mathbf{1}_0 &&\textrm{(since $T_vx^{\la}=x^{\la}T_v$)}.
\end{align*}
Let $w_{0\la}$ be the longest element of $W_{0\la}$. Then $W_{0\la}(q)=q^{\ell(w_{0\la})}W_{0\la}(q^{-1})$, and so
\begin{align*}
P_{\la}(x,q^{-1})\mathbf{1}_0&=q^{\ell(w_{0\la})-\ell(w_0)}\sum_{u\in W_0^{\la}}q^{\frac{1}{2}\ell(u)}T_ux^{\la}\mathbf{1}_0=q^{\ell(m_{\la})-\ell(t_{\la})}\sum_{u\in W_0^{\la}}q^{\frac{1}{2}\ell(u)}T_ux^{\la}\mathbf{1}_0,
\end{align*}
where we have used $t_{\la}=m_{\la}w_0w_{0\la}$ to get $\ell(w_0)-\ell(w_{0\la})=\ell(t_{\la})-\ell(m_{\la})$. Since 
$$
x^{\la}\mathbf{1}_0=T_{t_{\la}}\mathbf{1}_0=T_{m_{\la}}T_{w_0w_{0\la}}\mathbf{1}_0=q^{\frac{1}{2}(\ell(w_0)-\ell(w_{0\la}))}T_{m_{\la}}\mathbf{1}_0=q^{\frac{1}{2}(\ell(t_{\la})-\ell(m_{\la}))}T_{m_{\la}}\mathbf{1}_0,
$$
using (\ref{eq:walk}) we have
\begin{align*}
P_{\la}(x,q^{-1})\mathbf{1}_0&= \sum_{u\in W_0^\lambda} 
q^{\frac{1}{2}(\ell(u)+\ell(m_{\la})-\ell(t_{\la}))} T_{um_{\la}}
\mathbf{1}_0 \\
&= \sum_{u\in W_0^\lambda} 
\sum_{p\in \cP(\vec u\cdot\vec m_\lambda)} 
q^{\frac{1}{2}(\ell(p)-\ell(t_{\la}))}q^{\frac{1}{2}f(p)}(1-q^{-1})^{f(p)} x_{\mathrm{end}(p)}\mathbf{1}_0.
\end{align*}
By (\ref{eq:signedlength}), if $\mathrm{end}(p)=t_{\mu}\varphi(p)$ then 
$
x_{\mathrm{end}(p)}\mathbf{1}_0=x^{\mu}T_{\varphi(p)^{-1}}^{-1}\mathbf{1}_0=q^{-\frac{1}{2}\ell(\varphi(p))}x^{\mu}\mathbf{1}_0,
$
and so
$$
P_{\la}(x,q^{-1})\mathbf{1}_0=\sum_{\mu\in P}\bigg(\sum_{p\in\cP(\vec \la)_{\mu}}q^{\frac{1}{2}(\ell(p)-\ell(t_{\la})+f(p)-\ell(\varphi(p)))}(1-q^{-1})^{f(p)}\bigg)x^{\mu}\mathbf{1}_0.
$$
By (\ref{eq:epsilformula}), (\ref{eq:dimension}), and (\ref{eq:signedlength}) we have
$
\ell(p)-\ell(t_{\la})+f(p)-\ell(\varphi(p))=2(\dim(p)-\langle\la+\mu,\rho\rangle).
$
\end{proof}

\begin{remark}\label{rem:diffpaths} For example, in type $A_2$ with $\la=\omega_1+\omega_2$ the expansion of $P_{\la}(x,q^{-1})$ is given by (\ref{eq:macpath}) and the 25 paths in Example~\ref{ex:paths}. We note that while our set $\cP(\vec\la)_{\mu}$ is very natural (particularly with respect to retractions in buildings; see Theorem~\ref{thm:retractions}), it does not give the most efficient expansion of $P_{\la}(x,q^{-1})$. Instead, let
$$
\cP'(\vec\la)_{\mu}=\left\{\begin{matrix}\textrm{positively folded alcove walks of type $\vec m_{\la}$ starting}\\
\textrm{at the alcove $u$ and with end alcove in $t_{\mu}W_0$}\end{matrix}\,\,\bigg|\,\, u\in W_0^{\la}\right\}.
$$
For example, in type $A_2$ with $\la=\omega_1+\omega_2$ there are $9$ paths in $\cP'(\vec\la)=\bigcup_{\mu\in P}\cP'(\vec\la)_{\mu}$:
$$\beginpicture
\setcoordinatesystem units <0.3cm,0.3cm>         
\setplotarea x from -5.5 to 5.5, y from -5 to 5    
    \plot 1.732 -5  5.196 1 /
    \plot -0.577 -5  4.041 3 /
    \plot -2.887 -5  2.887 5 /
    \plot -4.041 -3  0.577 5 /
    \plot -5.196 -1  -1.732 5 /
    \plot -1.732 -5  -5.196 1 /
    \plot 0.577 -5  -4.041 3 /
    \plot 2.887 -5  -2.887 5 /
    \plot 4.041 -3  -0.577 5 /
    \plot 5.196 -1  1.732 5 /
    \plot -3.2 4  3.2 4 /
    \plot -4.4 2  4.4 2 /
    \plot -5.4 0  5.4 0 /
    \plot -3.2 -4  3.2 -4 /
    \plot -4.4 -2  4.4 -2 /
    \arrow <5pt> [.2,.67] from 0 1.3 to 0 2.8
    \arrow <5pt> [.2,.67] from 0 -1.3 to 0 -2.8
    \arrow <5pt> [.2,.67] from 1.2 0.7 to 2.4 1.4
    \arrow <5pt> [.2,.67] from 1.2 -0.7 to 2.4 -1.4
    \arrow <5pt> [.2,.67] from -1.2 0.7 to -2.4 1.4
    \arrow <5pt> [.2,.67] from -1.2 -0.7 to -2.4 -1.4
\endpicture\qquad\qquad\textrm{and}\qquad\qquad\beginpicture
\setcoordinatesystem units <0.3cm,0.3cm>         
\setplotarea x from -5.5 to 5.5, y from -5 to 5    
    \plot 1.732 -5  5.196 1 /
    \plot -0.577 -5  4.041 3 /
    \plot -2.887 -5  2.887 5 /
    \plot -4.041 -3  0.577 5 /
    \plot -5.196 -1  -1.732 5 /
    \plot -1.732 -5  -5.196 1 /
    \plot 0.577 -5  -4.041 3 /
    \plot 2.887 -5  -2.887 5 /
    \plot 4.041 -3  -0.577 5 /
    \plot 5.196 -1  1.732 5 /
    \plot -3.2 4  3.2 4 /
    \plot -4.4 2  4.4 2 /
    \plot -5.4 0  5.4 0 /
    \plot -3.2 -4  3.2 -4 /
    \plot -4.4 -2  4.4 -2 /
    \plot 0.15 -1.15 0.15 -1.8 /
    \plot 0.15 -1.8 -0.15 -1.8 /
    \arrow <5pt> [.2,.67] from -.15 -1.8 to -.15 -1.15
    \put{
\beginpicture
\setcoordinatesystem units <0.6cm,0.6cm>
\setplotarea x from -1 to 1, y from -1 to 1  
    \arrow <5pt> [.2,.67] from 0.3 -0.15 to 0 0
    \plot 0.3 -0.15 0.225 -0.3 /
    \plot 0.225 -0.3 -0.05 -0.13 /
\endpicture} at 0.9 -0.5
    \put{\beginpicture 
\setcoordinatesystem units <0.6cm,0.6cm>
\setplotarea x from -1 to 1, y from -1 to 1  
    \plot 0.05 -0.13 -0.225 -0.3 /
    \plot -0.225 -0.3 -0.3 -0.15 /
    \arrow <5pt> [.2,.67] from -0.3 -0.15 to 0 0
\endpicture} at -1.1 -0.5
\endpicture$$
We have
\begin{align}\label{eq:macpath2}
P_{\la}(x,q^{-1})=q^{\frac{1}{2}(\ell(t_{\la})-\ell(m_{\la}))}\sum_{\mu\in P}\bigg(\sum_{p\in \cP'(\vec\la)_{\mu}}q^{-\frac{1}{2}(\ell(\iota(p))+\ell(\varphi(p)))}(q^{\frac{1}{2}}-q^{-\frac{1}{2}})^{f(p)}\bigg)x^{\mu},
\end{align}
where $\iota(p)\in W_0^{\la}$ is the \textit{initial alcove} of $p$. In type $A_2$ with $\la=\omega_1+\omega_2$ formula (\ref{eq:macpath2}) quickly gives
\begin{align*}
P_{\la}(x,q^{-1})=x^{\la}+x^{s_1\la}+x^{s_2\la}+x^{s_1s_2\la}+x^{s_2s_1\la}+x^{w_0\la}+(2+q^{-1})(1-q^{-1}).
\end{align*}

The proof of (\ref{eq:macpath2}) is exactly analogous to the proof of (\ref{eq:macpath}) given above, except the initial expansion of $\mathbf{1}_0$ is replaced by the expansion
$$
\mathbf{1}_0=q^{\ell(w_0)}\sum_{w\in W_0}q^{-\frac{1}{2}\ell(w)}T_{w^{-1}}^{-1}=q^{\ell(w_0)}\sum_{w\in W_0}q^{-\frac{1}{2}\ell(w)}x_v,
$$
which is proved by noticing that
$$
\mathbf{1}_0=\sum_{w\in W_0}q^{\frac{1}{2}\ell(w_0w)}T_{w_0w}=q^{\frac{1}{2}\ell(w_0)}T_{w_0}\sum_{w\in W_0}q^{-\frac{1}{2}\ell(w)}T_{w^{-1}}^{-1}
$$
and then multiplying by $q^{\frac{1}{2}\ell(w_0)}T_{w_0}^{-1}$.
\end{remark}

\end{appendix}

\bibliography{Bib.bib}

\begin{thebibliography}{10}

\bibitem{andersonpoly}
J.~E. {A}nderson.
\newblock A polytope calculus for semisimple groups.
\newblock {\em Duke Math. J.}, 116(3):567--588, 2003.
\newblock arXiv:math/0110225.

\bibitem{bourbaki}
N.~{B}ourbaki.
\newblock {\em {L}ie Groups and {L}ie Algebras, Chapters 4--6}.
\newblock Elements of Mathematics. Springer-Verlag, Berlin Heidelberg New York,
  2002.

\bibitem{brown}
K.~{B}rown.
\newblock {\em {B}uildings}.
\newblock Springer-Verlag, New York, 1989.

\bibitem{litt}
S.~{G}aussent and P.~{L}ittelmann.
\newblock {LS}-galleries, the path model and {MV}-cycles.
\newblock {\em Duke Math. J.}, 127(1):35--88, 2005.
\newblock arXiv:math/0307122v3.

\bibitem{ulrich}
U.~{G}\"{o}rtz.
\newblock {A}lcove walks and nearby cycles on affine flag manifolds.
\newblock {\em J. Alg. Comb.}, 26:415--430, 2007.
\newblock arXiv:math/0610839.

\bibitem{haines}
T.~{H}aines.
\newblock {O}n matrix coefficients of the {S}atake isomorphism: {C}omplements
  to the paper of {M}. {R}apoport.
\newblock {\em Manuscripta Math.}, 101:167--174, 2000.

\bibitem{petra}
P.~{H}itzelberger.
\newblock {K}ostant convexity for affine buildings.
\newblock {\em {P}reprint}, 2008.
\newblock arXiv:math/0701094v2.

\bibitem{h2}
J.~E. {H}umphreys.
\newblock {\em {I}ntroduction to {L}ie Algebras and Representation Theory},
  volume~9 of {\em Graduate Texts in Mathematics}.
\newblock Springer-Verlag, New York-Berlin, 1978.

\bibitem{kamnitzer2}
J.~{K}amnitzer.
\newblock {M}irkovi\'{c}-{V}ilonen cycles and polytopes.
\newblock {\em {P}reprint}, 2005.
\newblock arXiv:math/0501365v2.

\bibitem{kamnitzer}
J.~{K}amnitzer.
\newblock The crystal structure on the set of {M}irkovi\'{c}-{V}ilonen
  polytopes.
\newblock {\em Adv. Math.}, 215:66--93, 2007.
\newblock arXiv:math/0505398v2.

\bibitem{littelmanncrystals}
P.~{L}ittelmann.
\newblock {C}ones, crystals, and patterns.
\newblock {\em Transformation groups}, 3(2):145--179, 1998.

\bibitem{m}
I.~G. {M}acdonald.
\newblock {\em {A}ffine {H}ecke Algebras and Orthogonal Polynomials}, volume
  157 of {\em Cambridge Tracts in Mathematics}.
\newblock Cambridge Univ. Press, Cambridge, 2003.

\bibitem{MV3}
I.~{M}irkovi\'{c} and K.~{V}ilonen.
\newblock Geometric {L}anglands duality and representations of algebraic groups
  over commutative rings.
\newblock {\em Ann. of Math.}, 166(1):95--143, 2007.
\newblock arXiv:math/0401222v4.

\bibitem{ram}
K.~{N}elsen and A.~{R}am.
\newblock {K}ostka-{F}oulkes polynomials and {M}acdonald spherical functions.
\newblock {\em Surveys in Combinatorics, 2003 (Bangor), London Math. Soc.
  Lecture Note Ser.}, 307:325--370, 2003.
\newblock arXiv:math/0401298.

\bibitem{p2}
J.~{P}arkinson.
\newblock {S}pherical harmonic analysis on affine buildings.
\newblock {\em Math. Z.}, 253(3):571--606, 2006.
\newblock arXiv:math/0604058.

\bibitem{ramparkinsonschwer}
J.~{P}arkinson, A.~{R}am, and C.~{S}chwer.
\newblock {C}ombinatorics in affine flag varieties.
\newblock {\em {T}o appear in {J}. {A}lgebra}, 2008.
\newblock arXiv:0801.0709.

\bibitem{ram2}
A.~{R}am.
\newblock Alcove walks, {H}ecke algebras, spherical functions, crystals and
  column strict tableaux.
\newblock {\em Pure Appl. Math. Quart.}, 2(4):963--1013, 2006.
\newblock arXiv:math/0601343.

\bibitem{rapoport}
M.~{R}apoport.
\newblock {A} positivity property of the {S}atake isomorphism.
\newblock {\em Manuscripta Math.}, 101:153--166, 2000.

\bibitem{ronan}
M.~{R}onan.
\newblock {\em {L}ectures on Buildings}.
\newblock Perspectives in Mathematics. Academic Press, 1989.

\bibitem{schwer}
C.~{S}chwer.
\newblock {G}alleries, {H}all-{L}ittlewood polynomials and structure constants
  of the spherical {H}ecke algebra.
\newblock {\em International Mathematics Research Notices}, 2006:1--31, 2006.
\newblock arXiv:math/0506287v3.

\bibitem{steinberg}
R.~{S}teinberg.
\newblock {\em Lectures on {C}hevalley groups}.
\newblock Yale University lectures. 1967.

\bibitem{tupan}
A.~{T}upan.
\newblock {P}ositivity for some {S}atake coefficients.
\newblock {\em Manuscripta Math.}, 112:191--195, 2003.

\end{thebibliography}
\bibliographystyle{plain}

\end{document}